\newtheorem{theorem}{Theorem}[section]
\theoremstyle{definition}
\newtheorem{example}[theorem]{Example}
\theoremstyle{remark}
\numberwithin{equation}{section}
\newtheorem{tm}{Theorem}[section]
\newtheorem{rk}{Remark}[section]
\newtheorem{prop}{Proposition}[section]
\newtheorem{lm}{Lemma}[section]
\newtheorem{cor}{Corollary}[section]
\newtheorem{ex}{Example}[section]
\newcommand{\R}{\mathbb R}
\newcommand{\OOO}{\mathcal O}
\newcommand{\Rd}{\R^d}
\newcommand{\<}{\langle}
\renewcommand{\>}{\rangle}
\numberwithin{figure}{section}
\newcommand{\zhou}[1]{{\color{blue} (#1 (Zhou))}}
\begin{document}

\title[Multiple shooting method for Wasserstein geodesic equation]
{
A continuation multiple shooting method for Wasserstein geodesic equation}

%    Remove any unused author tags.
\author{Jianbo Cui}
\address{School of Mathematics, Georgia Tech, Atlanta, GA 30332, USA}
\curraddr{}
\email{jcui82@math.gatech.edu}
\thanks{}
%    author one information
\author{Luca Dieci}
\address{School of Mathematics, Georgia Tech, Atlanta, GA 30332, USA}
\curraddr{}
\email{dieci@math.gatech.edu}
\thanks{}
%    author one information
\author{Haomin Zhou}
\address{School of Mathematics, Georgia Tech, Atlanta, GA 30332, USA}
\curraddr{}
\email{hmzhou@math.gatech.edu}
\thanks{}
%    author two information

\subjclass[2010]{}

\keywords{Hamiltonian flow; boundary value problem; optimal transport; multiple-shooting method}

\date{\today}

\dedicatory{}

\begin{abstract}
%In this paper, we study the optimal transport problem  by means of Wasserstein Hamiltonian flow and a multiple shooting method. Our approach is based on a continuous interpolation of boundary value problem and computing adaptively both density and velocity by Wasserstein Hamiltonian flow during multiple shooting procedures. 
In this paper, we propose a numerical method to solve the classic $L^2$-optimal transport problem.
Our algorithm is based on use of multiple shooting, in combination with a 
continuation procedure, to solve the boundary value problem associated to the transport problem.
We exploit the viewpoint of Wasserstein Hamiltonian flow with initial and target densities, and
our method is designed to retain the underlying Hamiltonian structure.
Several numerical examples are presented to illustrate the performance of the method.

\end{abstract}

\maketitle

\section{Introduction}

Optimal transport (OT) has a long and rich history, and it finds applications in various fields, such as image processing, 
machine learning and  economics  (e.g., see \cite{MBMOV16,San15}). 
The first mass transfer problem, a civil engineering problem, was considered by Monge in 1781.  
A modern treatment of this problem, in term of probability densities, was studied by Kantorovich in %his seminal work 
\cite{Kan04}.  In this light, the optimal transport problem consists in moving a certain probability density into another,
while minimizing a given cost functional.  Depending on whether (one or both of) the densities are
continuous or discrete, one has a fully discrete, or a semi-discrete, or a continuous  OT problem.  In this work,
we consider a continuous OT problem subject to the cost given by the squared $L^2$ norm.  This is 
the  most widely studied continuous OT problem, and
the formulation we adopt in this paper is based on an optimal control formulation in a fluid mechanics framework, 
known as {\emph{Benamou-Brenier formula}}, 
established in \cite{BB00}.  The starting point is to cast the OT problem in a  variational form as  
%\zhou{Do we use $\mu$ and $\nu$ for other purpose in the paper? I didn't find any. If so, I suggest that we use $\mu$ for $\rho^0$ and $\nu$ for $\rho^*$ in the paper. That seems to avoid many confusions.} {\color{red} We never use these symbols.}
%It indicates that the geodesic equations in OT is Wasserstein Hamiltonian flow on density manifold:  the critical point of the following variational problem,
\begin{equation}\label{min-e}\begin{split}
\inf_{v}\{\int_0^1\<v ,v \>_{\rho}dt\ : \, 
\partial_t \rho  + \nabla\cdot(\rho  v )=0, \rho(0)=\mu, \rho(1)=\nu \},
\end{split}\end{equation}
where $ \<v,v\>_{\rho}:=\int_{\mathbb R^d} |v|^2 \rho dx$ with 
smooth velocity field  $v(t,x)\in \mathbb R^d$, and $\mu$ and $\nu$ are
probability density functions satisfying $\int_{\mathbb R^d}|x|^2\mu(x) dx,\int_{\mathbb R^d}|x|^2\nu(x) dx<+\infty$. 
This ensures the existence and uniqueness of the optimal map $M^*$ for the equivalent Monge-Kantorovich problem of 
\eqref{min-e}, i.e., $\inf_{M} \int_{\mathbb R^d}|M(x)-x|^p \mu(x) dx$ with $M: \mathbb R^d \to \mathbb R^d$ transferring $\mu$ to $\nu$ 
(see e.g., \cite[Theorem 1.22]{San15}). 
Moreover, the optimal map has the form $M^*(x)=\nabla \psi(x)=x+\nabla \phi(x)$, $\mu$-$a.s.,$ with a convex function $\psi(x)$.  
From \cite{BB00}, we have that $\nabla \phi(x)=v(0,x)$ and that the characteristic line $(X(t,x),v(t,X(t,x)))$ satisfies 
\begin{align*}
\partial_t  \rho(t,X(t,x))+\nabla \cdot (\rho(t,X(t,x)) v(t,X(t,x)))=0,\\
\partial_t v(t,X(t,x))+\nabla (\frac 12|v(t,X(t,x))|^2)=0. 
\end{align*}
When $X(t,x)=x+t v(0,x)$ is invertible, we obtain that 
$\rho(t)=X(t,\cdot)^\# \rho(0)$ and that $v(t,x)=v(0,X^{-1}(t,x))=\nabla \psi(0,X^{-1}(t,x)).$ 
We refer to \cite{MR1177479,MR1440931,San15} and references therein for results about 
regularity of $M^*$ and $\psi.$
The optimal value in \eqref{min-e} is known as
the $L^2$-Wasserstein distance square between $\mu$ and $\nu$, and written as  $g_W^2(\mu, \nu)$.
The formulation \eqref{min-e} is interpreted as finding the optimal vector field $v$ to transport the
given density function $\mu$ to the density $\nu$ with the minimal amount of kinetic energy. 
(We emphasize that the ``time variable'' $t$ has no true physical meaning, and it serves the role of
a homotopy parameter.)

By introducing  the new variable $S$ satisfying $v=\nabla S$, 
the critical point of \eqref{min-e} satisfies (up to a spatially independent function $C(t)$)
the following system in the unknowns $(\rho, S)$:
\begin{equation}\label{GeodEqn1}\begin{cases}
& \partial_t \rho + \nabla \cdot ({\rho }\nabla S)=0 \\
& \partial_t S +\frac 12 |\nabla S|^2 =0,
\end{cases}\end{equation}
subject to boundary conditions $\rho(0)=\mu, \rho(1)=\nu$. 
%and $v=\nabla S$, 
This is the well-known geodesic equation between
two densities $\mu$ and $\nu$ on the Wasserstein manifold \cite{Vil09}, and can also be viewed as a
Wasserstein Hamiltonian flow with Hamiltonian $ H(\rho,S)=\frac 12\int_{\mathbb R^d}|\nabla S|^2\rho dx$ when $C(t)=0$, \cite{CLZ20}.
If $S^0=S\Big|_{t=0}$ is known, 
the optimal value $g_W(\mu, \nu)$, the $L^2$-Wasserstein distance between $\mu$ and $\nu$, 
equals $\sqrt{2  H(\mu,S^0)}$. 
%To conclude, \eqref{min-e} is equivalent to the Kantorovich problem $\inf_{M: \mathbb R^d\to \mathbb R^d}\{\int_{\mathbb R^d}\frac 12 \rho_0 |v(0,x)|^2 dx|\; \rho_1=M^\#\rho_0 \}$. 
%Therefore, if $\nabla^2\psi >0,$  $v(0,X^{-1}(t,x))$ gives the minimizer of \eqref{min-e} in pointwise sense.
%Some geometry condition (see e.g. ), such as $\Omega_1,\Omega_2$ (the supports of $\mu$ and $\nu$ are convex domains) are $\mathcal C^2$ and strictly convex at the boundary points, strictly positive and bounded $\mu,\nu$ with compatibility property $\int_{\Omega_1}\mu dx=\int_{\Omega_2}\nu dy$, is used to ensure the existence and uniqueness of $v(t,x)$ in pointwise sense (i.e., $\psi\in C^{2,\alpha}(\Omega_1)$ if $\mu,\nu\in \mathcal C^{\alpha}(\mathbb R^d), \alpha\in (0,1)$).

%Denote the unique solution $v_t$ of \eqref{min-e} up to a null set of $\rho_t$, 
%the optimal value in \eqref{min-e} is known as
%the $L^2$-Wasserstein distance square between $\mu$ and $\nu$, and written as  $g_W^2(\mu, \nu)$.

%with the Hamiltonian $ H(\rho,S)=\frac 12\int_{\mathcal O}|\nabla S|^2\rho dx$ on Wasserstein manifold.

\begin{rk}\label{SnotUnique}
Obviously, $S$ is defined only up to an arbitrary constant.  As a consequence, the $(\rho, S)$ formulation 
\eqref{GeodEqn1} of the boundary value problem cannot have a unique solution.  
Because of this fact, we will in the end
reverse to using a formulation based on $\rho$ and $v$, but the Hamiltonian structure of \eqref{GeodEqn1} will
guide us in the development of appropriate semi-discretizations of the problem in the $(\rho, v)$ variables.
\end{rk}

In recent years, there have been several numerical studies concerned with approximating solutions of OT problems,
and many of them are focused on the continuous problem considered in this work, that is on computation of
the Wasserstein distance $g_W$ and the underlying OT map. 
%With regard to the computations of OT problems, there exists fruitful numerical works focusing on  
%the optimal transport distance or the optimal map
A key result in this context is that the optimal map is the gradient of a convex function $u$, 
which is the solution of  the so-called Monge-Amp\'ere equation, a non linear elliptic PDE subject to
non-standard boundary conditions.  We refer to 
\cite{BB99,BFO14,Froese, GLSY16, OlikerPrussner, PBTIT15,WBBC16}, 
for a sample of numerical work on the solution of the Monge-Amp\'ere equation.
For different approaches, in the case of continuous,
discrete, and semi-discrete OT problems, and for a variety of cost functions, we refer to
\cite{CHYGT18, Cuturi, LucaJD, LYO18, ObermanRuan, PPO14, RCLO18, TWK18}.
%\luca{Add work with JD, the sinkhorn works, the discrete work of Oberman, etc..}
%\zhou{(we should include more references, and mention them more specifically, such as Monge-Ampere equation, Wuchen's optimizatoin type of methods)}.

However, numerical approximation of the solution of the geodesic equation has received little attention, and this
is our main scope in this computational paper. There are good reasons to consider solving the geodesic equation: at once one can
recover the Wasserstein distance, the OT map, and the ``time dependent'' vector field producing the optimal trajectory. 
At the same time, there are also a number of obstacles that make the numerical solution of the 
Wasserstein geodesic equation very challenging: 
the density $\rho$ needs to be non-negative, mass conservation is required, and 
retaining the underlying symplectic structure is highly desirable too. 
Another hurdle, which is not at all obvious, is that the Hamiltonian system \eqref{GeodEqn1} with initial values on 
the Wasserstein manifold often develops singularities in finite time (see e.g. \cite{CDZ20}). 
These challenges must be overcome when designing numerical schemes for the boundary value problem
\eqref{GeodEqn1}.
%{\color{red}Besides, the numerical method should be designed to preserves the original properties of geodesic equations, such as the nonnegativity of density and the mass conservation law.} These brings many challenges when solving geodesic equations numerically. 

In this paper, we propose to compute the solution of \eqref{GeodEqn1} by combining a multiple shooting method, in conjunction with
a continuation strategy, for an appropriate semi-discretization of \eqref{GeodEqn1}.
First, we consider a spatially discretized version of \eqref{GeodEqn1},  which will give a (large) boundary value problem of ODEs.
To solve the latter, we will use a multiple shooting method, 
whereby the interval $[0,1]$ is partitioned into several subintervals, $[0,1]=\cup_{i=0}^{K-1}[t_i,t_{i+1}]$, initial guesses for the density
and the velocity are provided at each $t_i$, $i=0,\dots, K-1$, initial value problems are solved on $[t_i,t_{i+1}]$,
and eventually enforcement of continuity and boundary conditions will result in a large nonlinear
system to solve for the density $\rho$ and velocity $v$ at each $t_i$.
To solve the nonlinear system, we use
% the Wasserstein geodesic equation will be solved given
%boundary values into multiple initial value problems. Then we apply the multiple shooting method and 
Newton's method, and --to enhance its convergence properties-- we will adopt a continuation method to
obtain good initial guesses for the Newton's iteration. 
%To ensure that the density evolves correctly, we introduce certain strategies to enforce the continuity of evolution. 
%a continuous interpolation of the boundary value problem, are also introduced.

Multiple shooting is a well studied technique for solving two-point boundary value problems of ordinary differential equations 
(TPBVPs of ODEs), and we refer to \cite{Keller:NumSoltpbvps} for an early derivation of the method, and to 
\cite{AscherMattheijRussell:SolnBVPs}
for a comprehensive review of techniques for solving TPBVPs of ODEs, and relations
(equivalence) between many of them.  
%It is worth mentioning that the multiple shooting method has been widely used for solving with boundary value problem (see e.g. \cite{DS94,Deu74,Mau76}), and it is
%\iffalse
%proven that multiple shooting is equivalent to the finite difference scheme on the same mesh \cite{} {\color{red} I did not find this result. It is know that apply the collocation method in the integration will give some finite difference scheme, like Runge--Kutta methods.
%\\
%Maybe we can say "and it is
%\fi
%shown that the shooting methods and finite difference scheme can be both  derived by the linearization methods (see e.g. \cite{FF20}). 
Our main reason for adopting multiple shooting is its overall simplicity, and the ease with which we can adopt appropriate time
discretizations of symplectic type (on sufficiently short time intervals) in order to avoid finite time singularities when solving
\eqref{GeodEqn1} subject to given initial conditions.
%The multiple shooting method can be also combined with a parallel strategy to improve the computational efficiency (see e.g. \cite{Kie94}). 

The rest of paper is organized as follows. In Section \ref{semidisc}, we briefly review the continuous OT problem and introduce a spatial
discretization to convert \eqref{GeodEqn1} into Hamiltonian ODEs.  At first, we propose the semi-discretization for the $(\rho, S)$ variables,
but then in Section \ref{MSsection} we will  revert it to the $(\rho, v)$ variables, which are those with which we end up working.
The multiple shooting method, and the continuation strategy, are also presented in Section \ref{MSsection} .   Results of 
numerical experiments are presented in Section \ref{NumExs}.

\section{Spatially discrete OT problems}\label{semidisc}

In this section, we introduce the spatial discretization of \eqref{GeodEqn1}.   
First of all, we need to truncate $\mathbb{R}^d$ to a finite computational domain, which for us will be a $d$-dimensional rectangular
box in $\mathbb{R}^d$: ${\OOO}=[x_L,x_R]^d$. %, $x_R-x_L> 0$.
We note that truncating $\Rd$ to a domain like $\OOO$ is effectively placing some natural condition on the type of
densities $\mu$ and $\nu$ we envision having, namely they need to decay sufficiently fast outside of the
box $\OOO$ (\cite{Givoli}).  Then, we propose the spatial discretization of \eqref{GeodEqn1}, by 
following the theory of OT problem on a finite graph similarly to what we did in \cite{CDZ20}.

Next, we let $G=(V,E)$ be a uniform lattice graph with equal spatial step-size $\delta x=\frac {x_R-x_L} n$ in each dimension. %for concreteness.
Here $V$ is the vertex set with $N=(n+1)^d$ nodes labeled by multi-index $i=(i_k)_{k=1}^d\in V, i_k\le n+1.$ 
$E$ is the edge set: $ij \in E$ if $j\in N(i)$ (read, $j$ is a neighbor of $i$), where 
\begin{align*}
N(i)=\cup_{k=1}^d N_k(i), \quad
N_k(i)=\Big\{(i_1,\cdots,i_{k-1},j_k,i_{k+1},\cdots,i_d)\big| |i_k-j_k
|=1 \Big\}.
\end{align*} 
A vector field $v$ on $E$  is a skew-symmetric matrix. The inner product of two vector fields  $u,v$ is defined by 
$$\<u,v\>_{\theta(\rho)}:=\frac 12\sum_{(j,l)\in E}u_{jl}v_{jl}\theta_{jl}(\rho),$$ 
where $\theta$ is a weight function depending on the probability density. 
In this study, we select it as the average of density on 
neighboring points, i.e., 
\begin{equation}\label{thetaij}
\theta_{ij}(\rho):=\frac {\rho_i+\rho_j} 2, \quad \text{if}\quad j\in N(i).
\end{equation}
For more choices, %for $\theta$, 
we refer to \cite{CDZ20} and references therein. 

%Introducing the divergence operator and gradient operator on $G$,
The discrete divergence of the flux function $\rho v$  is defined as 
$$div_G^{\theta}(\rho v):=-(\sum_{l\in N(j)} \frac 1{\delta x^2}v_{jl}\theta_{jl}).$$
%Then the discrete continuity equation can be rewritten as 
%\begin{align*}
%\partial_t \rho_i+(div_G^{\theta}(\rho v))_i=0.
%\end{align*}
Using the discrete divergence and inner product, a discrete version of the
Benamou-Brenier formula is introduced in \cite{CDLZ19},
%As shown in \cite{CLZ19b,CDZ20}, the critical point $(\rho, v)$ of 
%the Wasserstein metric on the graph, which is a discrete version of  Benamou-Brenier formula:
\begin{align*}%\label{dhs-var}
W^2(\mu,\nu)=\inf_{v}\Big\{\int_{0}^1\<v,v\>_{\theta(\rho)}dt \,\ : \,
\frac{d\rho}{dt}+div_G^{\theta}(\rho v)=0, \; \rho(0)=\mu,\; \rho(1)=\nu\Big\}.
\end{align*} 
By the Hodge decomposition on graph, it is proved that the optimal vector field $v$ can be expressed as the gradient 
of potential function $S$ defined on the node set $V$, i.e. $v=\nabla_G S:=(S_j-S_l)_{(j,l)\in E}$, $\rho_t$-a.s. %for some function $S$ on $V$. 
Similarly, its critical point satisfies the discrete Wasserstein Hamiltonian flow (cfr. with \eqref{GeodEqn1})
\begin{equation}\label{dhs}\begin{split}
&\frac {d\rho_i}{d t}=\sum_{j\in N(i)}\frac 1{(\delta x)^2}(S_i-S_j)\theta_{ij}(\rho)=\frac {\partial \mathcal H}{\partial S_i},\\
&\frac {d S_i}{dt}=-\frac 12\sum_{j\in N(i)}\frac 1{(\delta x)^2} (S_i-S_j)^2 \frac {\partial \theta_{ij}(\rho)}{\partial \rho_i}=-\frac {\partial \mathcal H}{\partial \rho_i}+C(t)
\end{split}\end{equation}
with boundary values $\rho(0)=
\mu$ and $\rho(1)=\nu.$
Here the discrete Hamiltonian is 
$$\mathcal H(\rho,S)=\frac 14\sum_{i=1}^N\sum_{j\in N(i)}\frac {|S_i-S_j|^2}{(\delta x)^2}\theta_{ij}(\rho).$$
%The relationship between $\rho$ and $S$ is $S=(-\Delta_{\rho_t})^+(\partial_t \rho_t)$ where $(-\Delta_{\rho_t})^+$ is 
% the pseudo inverse of $\Delta_{\rho_t}(\cdot):=\nabla_G \cdot(\rho_t \nabla_G (\cdot))$ (see e.g. \cite{MR3834701}). 
We observe that \eqref{dhs} is a semi-discrete version of the Wasserstein Hamiltonian flow,
preserving the Hamiltonian and symplectic structure of the original system \eqref{GeodEqn1}. 
Likewise, the Wasserstein distance
 %Meanwhile \eqref{dhs-var} and the discrete  Wasserstein Hamiltonian flow leads to the relationship between the Wasserstein distance and Hamiltonian, i.e., 
$W(\mu,\nu)$ can be approximated by $\sqrt{2\mathcal H(\mu,S^0)}$, 
where $S^0$ is the initial condition of the spatially discrete $S$. 
 Finally, define the density set by 
$$\mathcal P(G)=\Big\{\rho=(\rho_i)_{i\in V}\Big| \sum_{i\in V}\rho_i (\delta x)^d =1, \rho_i\ge 0, i\in V \Big\},$$
where $\rho_i$ represents the density on node $i$. The interior of $\mathcal P(G)$ is denoted by $\mathcal P_{o}(G).$ 
 
In this study, \eqref{dhs} is the underlying spatial discretization for our numerical method (but see 
\eqref{rhov} below), in large part because of the following result which gives 
some important properties of \eqref{dhs}, and whose proof is in \cite[Proposition 2.1]{CDZ20}.
\iffalse
\zhou{In the following proposition, are we use $\rho$ or $p$? It feels that we should be using $p$, we need to check this. Another question, in the rest of the paper, we use $\rho$, why do we introduce $p$ here?}
{\color{red}The following proposition holds for $\rho$ and $p$. The only difference is the frist property, the mass conservation law. When we use $\rho$, then summation of $\rho$ is not 1. When we use $p$, the summation is one. in the rest of the paper, we use $\rho$. We may remove the notations $p.$ }
\fi
\begin{prop}\label{well-dhs}
Consider \eqref{dhs} with initial values $\mu$ and $S^0$ and let $T^*$ be the first time where the system develops a singularity.
Then, for any $\mu\in \mathcal P_o(G)$ and any
function $S^0$ on $V$, there exists a unique solution of \eqref{dhs} for all $t<T^*$,  
and it satisfies the following properties for all $t<T^*$.
\begin{enumerate}[label=(\roman*)]
\item Mass is conserved: 
$$\sum_{i=1}^N\rho_i(t) = \sum_{i=1}^N\mu_i^0.$$
\item Energy is conserved: 
$$\mathcal H(\rho(t),S(t))=\mathcal H(\mu,S^0).$$
\item Symplectic structure is preserved: 
$$d\rho(t)\wedge dS(t) =d\mu \wedge dS^0.$$
\item The solution is time reversible:  if $(\rho(t), S(t))$ is the solution of \eqref{dhs}, then 
$(\rho(-t), -S(-t))$ also solves it. %\eqref{dhs}.
\iffalse
\item It is time transverse invariant with respect to the linear potential: 
if $\mathbb V^{\mathbb \alpha}=\mathbb V-\mathbb \alpha$, $\alpha\in \mathbb R$, then $S^{\mathbb \alpha}=S+\mathbb \alpha t$ is 
the solution of \eqref{dhs} with potential $\mathbb V^{\mathbb \alpha}$. \zhou{This property is not relevant to the rest of the paper, and it needs to introduce $\mathbb V$. May we remove it?}
\fi
\item A time invariant $\widetilde \rho\in \mathcal P_o(G)$ and $\widetilde S(t)=-v t$ form an interior stationary solution of \eqref{dhs} 
if and only if $\mathcal H(\rho,S)$ is spatially independent (we denote it as $\mathcal H(\rho) $ in this case), 
$\widetilde \rho$ is the critical point of $\min\limits_{\rho\in \mathcal P_o(G)}\mathcal H( \rho)$ and 
$v=\mathcal H(\widetilde \rho)$. 
%\zhou{we don't have the definition for $\mathcal H(\widetilde \rho)$, right?} {\color{red} Here we use the fact that  $\mathcal H(\widetilde \rho)=\mathcal H(\widetilde \rho, \widetilde S)$ due to the fact that $\widetilde S$ is independent of spatial variables.}
\end{enumerate}
\qed
\end{prop}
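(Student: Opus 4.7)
The plan is to address the six claims in turn, using the polynomial structure and Hamiltonian form of \eqref{dhs}. Since $\theta_{ij}(\rho)$ from \eqref{thetaij} is affine in $\rho$, the right-hand side of \eqref{dhs} is polynomial in $(\rho,S)$, so Picard--Lindel\"of gives a unique $C^1$ local solution from any interior initial datum; by definition, $T^*$ is the maximal time on which $\rho(t)\in\mathcal P_o(G)$.

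For mass conservation (i), I would sum the first equation of \eqref{dhs} over $i\in V$: since $\theta_{ij}=\theta_{ji}$ and $(S_i-S_j)=-(S_j-S_i)$, the double sum is antisymmetric under $i\leftrightarrow j$ and therefore vanishes. For energy conservation (ii), I would differentiate $\mathcal H(\rho(t),S(t))$ along the flow and substitute \eqref{dhs}: the Hamiltonian cross terms cancel, leaving $C(t)\sum_i\partial_{S_i}\mathcal H=C(t)\frac{d}{dt}\sum_i\rho_i$, which vanishes by (i). For symplectic preservation (iii), I would rewrite \eqref{dhs} as a time-dependent Hamiltonian system with Hamiltonian $\mathcal K(\rho,S,t)=\mathcal H(\rho,S)-C(t)\sum_j\rho_j$, so that $\dot\rho_i=\partial_{S_i}\mathcal K$ and $\dot S_i=-\partial_{\rho_i}\mathcal K$; the standard Liouville argument then gives $\frac{d}{dt}\bigl(\sum_i d\rho_i\wedge dS_i\bigr)=0$.

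For reversibility (iv), I would set $\bar\rho(t)=\rho(-t)$, $\bar S(t)=-S(-t)$ and plug into \eqref{dhs}. Because $\mathcal H$ is quadratic in the differences $S_i-S_j$, it is even in $S$, so $\partial_{\rho_i}\mathcal H$ is even and $\partial_{S_i}\mathcal H$ is odd in $S$; combining these parities with the chain-rule sign from reversing time recovers \eqref{dhs} for $(\bar\rho,\bar S)$ with reflected gauge $C(-t)$. For the stationarity equivalence (v), I would plug $\tilde\rho$ time-invariant and $\tilde S_i(t)=-vt$ into \eqref{dhs}: since $\tilde S$ is spatially constant, $\partial_{S_i}\mathcal H(\tilde\rho,\tilde S)=0$ and the first equation is automatic, while the second reads $-v=-\partial_{\rho_i}\mathcal H(\tilde\rho,\tilde S)+C(t)$ for every $i$, forcing $\partial_{\rho_i}\mathcal H$ to be independent of $i$ (this is the spatial independence condition, permitting the notation $\mathcal H(\rho)$). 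This is exactly the Lagrange-multiplier condition for $\tilde\rho$ to be a critical point of $\mathcal H$ on the simplex $\mathcal P_o(G)$, and reading off the multiplier yields $v=\mathcal H(\tilde\rho)$; the reverse implication is obtained by running the calculation backwards.

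The hardest step is (v), due to the gauge freedom $S\mapsto S+\textrm{const}$ and the need to use a Lagrange multiplier on the constrained simplex $\mathcal P_o(G)$. Item (iii) is also subtle, since the bare system \eqref{dhs} is not literally in Hamiltonian form because of the $C(t)$ term, and the reformulation via $\mathcal K$ is what makes symplecticity transparent.
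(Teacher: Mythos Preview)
The paper does not actually prove this proposition: immediately before the statement it says ``whose proof is in \cite[Proposition 2.1]{CDZ20}'' and ends with a bare \qed. So there is no in-paper argument to compare against; the result is imported wholesale from \cite{CDZ20}. Your direct verification is therefore more than the paper itself supplies, and your arguments for local existence/uniqueness and for items (i)--(iv) are exactly the standard Hamiltonian-system computations one expects (and presumably what \cite{CDZ20} does): antisymmetry of the edge sum for mass, the Poisson-bracket cancellation for energy, the modified Hamiltonian $\mathcal K=\mathcal H-C(t)\sum_j\rho_j$ for symplecticity, and the parity of $\mathcal H$ in the differences $S_i-S_j$ for reversibility. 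These are correct.

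One remark on (v): as stated in this paper the item is a somewhat awkward specialization of the corresponding statement in \cite{CDZ20}, where the Hamiltonian carries additional potential terms (Fisher information, linear and interaction potentials). In that general setting the phrase ``$\mathcal H(\rho,S)$ is spatially independent'' and the identification $v=\mathcal H(\widetilde\rho)$ have real content. For the bare geodesic Hamiltonian $\mathcal H=\tfrac14\sum |S_i-S_j|^2\theta_{ij}(\rho)$, once $\widetilde S$ is spatially constant the Hamiltonian vanishes identically, all $\partial_{\rho_i}\mathcal H$ are zero, and (v) degenerates (every $\widetilde\rho$ is a critical point and $v=0$). Your Lagrange-multiplier reading is the right way to make sense of it, but it is worth being aware that the nontrivial content of (v) lives in \cite{CDZ20}, not in the geodesic case treated here.
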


\section{Algorithm}\label{MSsection}
In this section, we first present the ideas of shooting methods, then combine them with a continuation strategy  
%interpolation of boundary value problem \eqref{dhs} to propose 
to design our algorithm for approximating the solution of the OT problem \eqref{min-e}.

\subsection{Single shooting}
To illustrate the single shooting strategy,
consider \eqref{dhs} in the time interval $[0,1]$.  Assuming that it exists,
denote with $\rho(t,S^0),$ $t\in [0,1]$, the solution of \eqref{dhs} with initial values $(\mu, S^0)$. 
To satisfy the boundary value at $t=1$, one needs to  
%Assume that there exists an initial value problem $(\rho^0,S^0)$ coinciding with \eqref{dhs} with given boundary value $(\rho^0,\rho^1)$.  The 
%idea of shooting is %to shooting out trajectories in different directions until we find a trajectory that has desirable boundary value, i.e.,
find $S^0$ such that the trajectory starting at $(\mu, S^0)$ passes through $\nu$ at $t=1$, i.e.,
\begin{equation} \label{shooting1}
\rho(1,S^0)-\nu=0.
\end{equation}
%A sufficient condition for the existence of $S^0$ is that the map $\rho(t_1,S^0)$ is onto from $\mathbb R^d$ to $\mathcal P(G)$ for a 
%fixed initial density, and this can be guaranteed by using the existence of optimal transport map (see e.g. \cite{BB00}).

To solve \eqref{shooting1}, root-finding algorithms must be used to update the current guess of $S^0$ to achieve better approximations.
For example, when using Newton's method, the updates are supposedly computed by
\begin{align*}
J(1,S^{(i)})\left(S^{(i+1)}-S^{(i)}\right)=-(\rho(1,S^{(i)})-\nu),\; i=0,1,\cdots,
\end{align*}
where $J(t,S)=\frac {\partial \rho(t,S)}{\partial S}$ is the Jacobian of $\rho(t,S)-\nu$ with respect to $S$. 
To ensure successful computations in Newton's method, finding a good initial guess for $S^0$ and having an invertible Jacobi matrix are crucial. 
%We discuss how to achieve both in the rest of this section. 
But, as we anticipated in Remark \ref{SnotUnique},
the Jacobian matrix $J(t,S)$ is singular, as otherwise
% This can be demonstrated simply through proof by contradiction. 
%If we assume that $Y$ is non-singular at a root, denoted as $(S^0)^*$, then $(S^0)^*$ 
a solution of \eqref{shooting1} ought to be isolated, which can't be true, since adding an arbitrary constant will still give a solution. 

To remedy this situation, we reverse to the $(\rho,v)$ formulation, and 
rewrite the Hamiltonian system \eqref{dhs} into an equivalent form in terms of $(\rho,v)$. 
%where $v=\nablaS$ to deal with the singularity of the Jacobi matrix. 
\iffalse
aperiodic structure (corresponding to Neumann boundary condition of $\rho$  and Dirichlet boundary condition for $S$ \cite{CDZ20} \zhou{This part is unclear because it requires the reader to read \cite{CDZ20} for what $G$ is. It is also not clear what ``aperiodic'' structure refers to. I suggest that we either give a direct definition for $G$ or remove this statement.}
\fi
More precisely, by letting $v_{ij} = S_j-S_i$ for $ij \in E$, \eqref{dhs} becomes 
\begin{equation}\begin{split}\label{rhov}
&\frac {d\rho_i}{d t}=-\sum_{j\in N(i)}\frac 1{(\delta x)^2} v_{ij} \theta_{ij}(\rho), \\ %\\\nonumber
&\frac {d v_{ij}}{dt}=\frac 12\sum_{k\in N(j)}\frac 1{(\delta x)^2} v_{kj}^2 \frac {\partial \theta_{jk}(\rho)}{\partial \rho_j}-
\frac 12\sum_{k\in N(i)}\frac 1{(\delta x)^2} v_{ki}^2 \frac {\partial \theta_{ik}(\rho)}{\partial \rho_{i}}.
\end{split}\end{equation}
Since $v_{ij}$ is the difference between $S_j$ and $S_i$, a constant shift in $S$ has no impact on the values of $v = \{v_{ij}\}$. 
On the other hand, there are now many redundant equations in \eqref{rhov}, because $\{v_{ij}\}$ are not independent variables. 
For example, they must satisfy $v_{ij} = - v_{ji}$. Furthermore, there are total $N=(n+1)^d$ unknown values for $S$, while 
$2dn(n+1)^{d-1}$ unknowns for $v$ on the lattice graph $G$. Clearly, to determine $S$ up to a constant, only $N-1$ values for 
$v$ are needed. In other words, there must be only $N-1$ independent $v$-equations in \eqref{rhov} to be solved, and
the remaining ones are redundant and must be removed so that the resulting system leads to a non-singular Jacobian.

There are different ways to remove the redundancies. 
To illustrate this in a simple setting, let us consider the 1-dimensional case ($d=1$), in which the lattice graph $G$ 
has $n-1$ interior nodes and $2$ boundary nodes. Each interior node has two neighbors while a boundary node 
has only one neighbor. We have at least two options: either to keep all equations for $v_{i,i+1}$, $i=1,\cdots,(N-1)$, or
to keep the equations for $v_{i, i-1}$, $i=2,\cdots, N$. Adopting the first choice, we have the following equations to solve
\begin{equation}\label{rhov1d}\begin{split}
&\frac {d\rho_i}{d t}=\frac 1{(\delta x)^2} v_{(i-1)i} \theta_{(i-1)i}(\rho) - \frac 1{(\delta x)^2} v_{i(i+1)} \theta_{i(i+1)}(\rho),\\ %\nonumber
&\frac {d v_{i(i+1)}}{dt}=\frac 14 \frac 1{(\delta x)^2} v_{(i-1)i}^2  - \frac 14\frac 1{(\delta x)^2} v_{i(i+1)}^2,
\end{split}\end{equation}
for all $i=1,\cdots,N-1$.  If we take no-flux boundary conditions for $(\rho,v)$, we have 
$v_{01}=0,\theta_{01}=0$.  Finally, mass conservation gives the condition
$\rho_N=\frac {1-\delta x\sum_{i=1}^{N-1} \rho_i}{\delta x}$. 
 
\iffalse
{\color{red}
Since every vector field $v$ on $G$ is characterized by $v_i:=S_{i+1}-S_i, i\le N-1$, \eqref{rhov} is also equivalent to 
\begin{align*}
&\frac {d\rho_i}{d t}=-\frac {\rho_i+\rho_{i+1}}2 \frac 1{\delta x^2} v_i+ \frac {\rho_{i}+\rho_{i-1}}2 \frac 1{\delta x^2}v_{i-1},\\\nonumber
&\frac {d v_i}{dt}=\frac 14\frac 1{(\delta x)^2} (v_{i-1})^2 -\frac 14 \frac 1{(\delta x)^2} (v_{i})^2, i=2,\cdots, N-2\\\nonumber,
&\frac {d\rho_1}{d t}=-\frac {\rho_1+\rho_{2}}2 \frac 1{\delta x^2} v_1,\;
\frac {d v_1}{d t}=-\frac 14 \frac 1{(\delta x)^2} (v_{2}^2),\\\nonumber
&\frac {d\rho_{N-1}}{d t}=-\frac {\rho_{N-1}+\rho_{N}}2 \frac 1{\delta x^2} v_{N-1}, \rho_N=\frac {1-\delta x\sum_{i=1}^{N-1} \rho_i}{\delta x}\\\nonumber
&\frac {d v_{N-1}}{d t}=\frac 14 \frac 1{(\delta x)^2} (v_{N-2}^2).
\end{align*}
}
\zhou{for the multiple shooting part, it seems that the formulation would be simpler if we use the notation $v_{ij}$ and $\theta_{ij}$. I suggest that we do it so that it is consistent with the rest of the paper.} 
\fi

Denoting $v(0)=v^0=\{v^0_{i,i+1}\}_{i=1}^{N-1}=\{S_{i+1}^0-S_i^0\}_{i=1}^{N-1}$, and the solution of \eqref{rhov1d} with initial 
values $(\mu,v^0)$ as $\rho_t=\rho(t,v^0)$, $v_t=v(t,v^0)$, we can revise the single shooting strategy in terms of $(\rho,v)$ as finding 
the initial velocity $v^0$ such that $\rho(1,v^0))=\nu$.  By applying Newton's method, we obtain
\begin{align*}
\hat{J}(1,v^{(m)})\bigl(v^{(m+1)}-v^{(m)}\bigr)=-(\rho(1,v^{(m)})-\nu),\; m=0,1,\cdots,
\end{align*}
where $\hat{J}(1,v^{(m)})=\left[\frac {\partial \rho_t}{\partial v^0}\right]_{1,v^{(m)}}$ is the Jacobian of $\rho(t,v(0))-\nu$ with respect to $v(0)$,
evaluated at $t=1$, $v=v^{(m)}$.  For later reference, and since $\nu$ plays no role in the definition of $\hat J$, let us
define the function
$$\hat{J}(t,v^0)=\left[\frac {\partial \rho}{\partial v^0}\right]_{t,v}, \,\ t\ge 0\ .$$

Now, the single shooting strategy we just outlined is plagued by a common shortfall of single shooting techniques, namely that
the initial guess $v^{(0)}$ must be quite close to the exact solution.  In the present context, this is further exacerbated by the fact
that \eqref{GeodEqn1} may develop singularities in finite time (see e.g. \cite{CDZ20}), and as consequence the
choice of a poor initial guess may (and does) lead to finite time blow-up of the solution of the initial value problem.
To overcome this serious difficulty, we now give a result showing that the function $\hat{J}(t,v^0)$ remains
invertible for sufficiently short times, and later will exploit this result to justify adopting a multiple shooting strategy.

\begin{lm} \label{lemma1}
 Let $G$ be a 1-dimensional uniform lattice graph and 
%$|t_1-t_0|$ 
let $t_1>0$ be sufficiently small. Assume that $(\rho,v)$ is the smooth solution of \eqref{rhov1d} satisfying $\mu>0.$
Then, the function $\hat{J}(t,v^0)$ is invertible for $t\in (0,t_1]$. 
\end{lm}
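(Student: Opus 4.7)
The plan is to expand $\hat{J}(t, v^0)$ as a power series in $t$ near $t=0$ and show that, although $\hat{J}(0, v^0) = 0$, the leading term is an invertible matrix; continuity of the sensitivity map then yields invertibility on some $(0, t_1]$. The vanishing $\hat{J}(0, v^0) = 0$ is immediate, since $\rho(0, v^0) = \mu$ carries no dependence on the shooting parameter $v^0$, so the argument must pass to first order in $t$.

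First I would form the variational equation for $\partial\rho/\partial v^0$ by differentiating \eqref{rhov1d} in $v^0_j$ and applying the chain rule. Writing $F_i(\rho,v)$ for the right-hand side of the $\rho_i$--equation of \eqref{rhov1d}, this gives
\begin{equation*}
\frac{d}{dt}\left(\frac{\partial \rho_i}{\partial v^0_j}\right) = \sum_k \frac{\partial F_i}{\partial \rho_k}\frac{\partial \rho_k}{\partial v^0_j} + \sum_l \frac{\partial F_i}{\partial v_l}\frac{\partial v_l}{\partial v^0_j}.
\end{equation*}
Evaluated at $t=0$ the first sum vanishes because $\partial\rho/\partial v^0\big|_{t=0}=0$, while $\partial v_l/\partial v^0_j\big|_{t=0}=\delta_{lj}$, whence
\begin{equation*}
M \,:=\, \left.\frac{d}{dt}\hat{J}(t,v^0)\right|_{t=0} \,=\, \left[\frac{\partial F_i}{\partial v_j}\right]_{\rho=\mu}.
\end{equation*}
Inspecting the $\dot\rho$-equations in \eqref{rhov1d}, each $F_i$ is linear in only $v_{i-1}$ and $v_i$ (with $v_{01}=0$ from the no-flux convention), so $M$ is a lower bidiagonal $(N-1)\times(N-1)$ matrix whose diagonal entries are $-\theta_{i(i+1)}(\mu)/(\delta x)^2 = -(\mu_i+\mu_{i+1})/(2(\delta x)^2)$ and whose subdiagonal entries are $\theta_{(i-1)i}(\mu)/(\delta x)^2$. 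The hypothesis $\mu>0$ forces every $\theta_{ij}(\mu)>0$, so $\det M$ is the nonzero product of its diagonal entries and $M$ is invertible.

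To conclude, by the smoothness of $(\rho,v)$ on some interval $[0,T^*)$ (cf.\ Proposition \ref{well-dhs}), the matrix-valued function $t\mapsto \hat{J}(t, v^0)$ is $C^1$ near $0$, so the Taylor expansion $\hat{J}(t, v^0)= tM + O(t^2)$ yields $\det \hat{J}(t, v^0) = t^{N-1}\det M + O(t^N)$. Choosing $t_1>0$ small enough keeps this determinant nonzero on $(0, t_1]$, which is exactly the claim. The main (minor) obstacle is careful bookkeeping at the lattice boundary, namely the no-flux conditions $v_{01}=0$, $\theta_{01}=0$ and the mass-conservation elimination $\rho_N=(1-\delta x\sum_{i<N}\rho_i)/\delta x$, which must be tracked so that $M$ is correctly recognized as a square $(N-1)\times(N-1)$ matrix with the claimed bidiagonal structure and nonvanishing diagonal; beyond this, the argument reduces to a clean leading-order computation.
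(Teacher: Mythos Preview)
Your proposal is correct and follows essentially the same route as the paper: both derive the variational equation for $\hat J$, note $\hat J(0,v^0)=0$, and identify the leading Taylor coefficient as the lower-triangular (in fact bidiagonal) matrix $[\partial F_i/\partial v_j]_{\rho=\mu}$, which is invertible because $\mu>0$ forces all $\theta_{i(i+1)}(\mu)>0$. Your explicit determinant step $\det\hat J(t,v^0)=t^{N-1}\det M+O(t^N)$ makes the conclusion slightly sharper than the paper's informal $\hat J(t,v^0)\approx tB_{11}+\mathscr O(t^2)$.
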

\iffalse
\zhou{can this lemma be stated or generalized to d-D situation? I believe so. We should write an extension or revise this part to make the general conclusion.}
{\color{red} This lemma is a special case in which the matrix can be expressed explicitly. For general d-D situation, the below proof is not applicable since the adjacent matrix changes and we do not have a clear lower triangle matrix structure. 
The general case is stated in Theorem 3.1.}
\fi 
\begin{proof}
Direct calculation shows that the function $\hat{J}(t,v^0)=\frac {\partial }{\partial v^{0}}\rho(t,v^0)$ satisfies
\begin{align*}
\frac{d}{dt} \frac {\partial \rho_t}{\partial v^{0}}&=B_{11} \frac {\partial v_t}{\partial v^{0}}
+B_{12}  \frac {\partial \rho_t}{\partial v^{0}},\; \,\ \hat{J}(0,v^0)=0_{n\times n},\\
%\frac {\partial \rho_{t_0}}{\partial v^{0}}=0_{n\times n},\\
\frac{d}{dt} \frac {\partial v_t}{\partial v^{0}}&=B_{22}\frac {\partial v_t}{\partial v^{0}},\; \,\ 
\left[\frac {\partial v_{t}}{\partial v^{0}}\right]_{t=0}=I,
%\frac {\partial v_{t_0}}{\partial v^{0}}=I,
\end{align*}
where  
\begin{align*}
&(B_{11})_{ii}=-\frac {\rho_i+\rho_{i+1}}{2(\delta x)^2},\; i=1,\cdots,n-1,\\
&(B_{11})_{i,i-1}=\frac {\rho_{i}+\rho_{i-1}} {2(\delta x)^2}, i=2,\cdots, n,\\
&(B_{11})_{nn}=\frac {1-\sum_{i=1}^{n-1}\rho_{i}\delta x}{2(\delta x)^3},\\
&(B_{12})_{11}=-\frac {v_1}{2(\delta x)^2}, (B_{12})_{ii}(\rho,v)=-\frac {v_i}{2(\delta x)^2}+\frac {v_{i-1}}{2(\delta x)^2}, i=2,\cdots,n,\\
&(B_{12})_{i,i-1}=\frac {v_{i-1}}{2(\delta x)^2},
(B_{12})_{i,i+1}=-\frac {v_{i}}{2(\delta x)^2}, i=2,\cdots, n-1,\\
&(B_{12})_{n,i}=\frac {v_n}{2(\delta x)^2}, i=1,\cdots,n-2,\; (B_{12})_{n,n-1}=\frac {v_n}{2(\delta x)^2}+\frac {v_{n-1}}{2(\delta x)^2},\\
&(B_{22})_{i,i+1}=-\frac 1{2(\delta x)^2} v_{i+1}, i=1,\cdots, n-1,\;
(B_{22})_{i,i-1}=\frac 1{2(\delta x)^2} v_{i-1},i=2,\cdots, n.
\end{align*}
Since $B_{11}$ is a lower triangular matrix, it is invertible if and only if 
$$\min_{i\le n}(\theta_{i,i+1}(\rho))>0,$$ 
where $\theta_{ij}$ is defined in \eqref{thetaij} and hence $\theta_{i,i+1}(\rho)>0$ for as long as $\rho$ remains
positive.  Moreover, given the initial condition to the identity for
$\frac {\partial v_{t}}{\partial v^{0}}$, if 
$t_1>0$ is sufficiently small  the matrix $\frac {\partial v_t}{\partial v^{0}}$ remains invertible.
% with an error term which is of order $\mathscr O(|t_1-t_0|)$.
Furthermore,  since $\hat{J}(0,v^0)=0_{n\times n}$, 
%$\frac {\partial \rho_t}{\partial v^{0}}$ is closed to $0_{n\times n}$ near $t_0.$ 
we conclude that for $t>0$ sufficiently small 
\begin{equation*}
\hat{J}(t,v^0)
%\frac {\partial \rho_t }{\partial v^{0}}
%\approx t B_{11}\frac {\partial v_{t_0}}{\partial v^{0}}+
\approx t B_{11}+ \mathscr O(t^2),
\end{equation*}
which implies that $\hat{J}(t,v^0)$  is invertible for $t>0$, and sufficiently small.
\end{proof}

%\begin{rk}\label{t0t1}
%	
%\end{rk}

Once $v$ values become available, if desired we can reconstruct $S$ on the lattice graph $G$ from the relation $v_{ij} = S_i - S_j$.
% but this is not really needed.  

We conclude this section by
emphasizing that the semi-discretization \eqref{rhov} is a 
spatial discretization of the Wasserstein geodesic equations written in term of $(\rho,v)$ \cite{CDZ20}. 
%However, a direct discretization based on the geodesic equations 
%in $(\rho,v)$ do not capture the Hamiltonian structure. 
%Hence, our algorithm is 
However, this semi-discretization has been arrived at by designing a semi-discretization scheme for the system \eqref{GeodEqn1}
in the $(\rho,S)$ variables, respecting the Hamiltonian nature of the problem, see \eqref{dhs} and Proposition \ref{well-dhs}.
%, even though we must introduce $v$ to carry out Newton's iteration. 

\subsection{Multiple shooting method} 
As proved in Lemma \ref{lemma1}, in the 1-d case
the function $\hat{J}(t,v^0)$ is invertible for sufficiently short times; however, for the success of single
shooting, this ought to be invertible at $t=1$, a fact which is often violated.
In addition, our numerical experiments indicate poor stability behavior when using the single shooting
method to solve the Wasserstein geodesic equations \eqref{dhs}.
% with certain boundary conditions. 
%{\luca{what are certain BCs?}} {\color{red}In our test, we use homogenous Dirchlet/Neumann boundary condition for $\rho,$ use the homogenous Neumann/Dirchlet boudary condition for $v$.}
To mitigate these drawbacks, we propose to use multiple shooting.
%Single shooting method do not performs well on the numerical stability and may works only in a small time. 
%In this part, we introduce the multiple shooting method which could improve the behaviors of the single shooting method and is able to compute the desired boundary value problem \eqref{dhs}.

We partition the interval $[0,1]$ into the union of sub-intervals $[t_k,t_{k+1}], k=0,\cdots,K-1$,
and let $\delta t=\max_{k}(t_{k+1}-t_k)$.
For example, we could take $t_k=k\delta t$ and $K\delta t=1$. 
To illustrate, %the steps of constructing multiple shooting method
we again take $G$ as the d-dimensional uniform lattice graph. %to explain  
%This procedure can be adopted for other spatial discretizations. 
%with aperiodic structure to state the detailed steps. 
%One could adopt the similar steps for other graphs with different structures. 
In each subinterval $[t_k,t_{k+1}], k=0,\cdots,K-1$, \eqref{dhs} is converted into equations in terms of $(\rho,v)$, 
just like the ones in \eqref{rhov}, 
\begin{align*}
\frac {d\rho_i^{k+1}}{d t}&
%=\sum_{j\in N(i)}\frac 1{(\delta x)^2}(S_i^{k+1}-S_j^{k+1})\theta_{ij}(\rho)
=-\sum_{j\in N(i)}\frac 1{(\delta x)^2}v_{ij}^{k+1}\theta_{ij}(\rho),\\\nonumber
\frac {d v_{ij}^{k+1}}{dt}
&=\frac 12\sum_{l\in N(j)}\frac 1{(\delta x)^2} (v_{jl}^{k+1})^2 
\frac {\partial \theta_{lj}(\rho)}{\partial \rho_j}-\frac 12\sum_{m\in N(i)}\frac 1{(\delta x)^2} (v_{mi}^{k+1})^2 \frac {\partial \theta_{ik}(\rho)}{\partial \rho_{i}},
\end{align*}
where $i\in N$ is a multi-index for a grid point in d-dimensional lattice. 
The super script $k+1$ in $\rho$ and $v$ indicates that the corresponding variables are defined in the subinterval $[t_k, t_{k+1}]$. 
%$v$ is vector field on $G$ and $v_{ij}=0$ if $ij \notin E,$ 
%At the boundary of each subinterval, $v$ and $\rho$ satisfy 
%\begin{align*}
%&\rho^{k+1}(t_k,\rho(t_k),v(t_k))=\rho(t_k), \; \rho^{k+1}(t_{k+1},\rho(t_k),v(t_k))=\rho(t_{k+1}), \\
%&v^{k+1}(t_k,\rho(t_k),v(t_k))=v(t_k), \; 
%v^{k+1}(t_{k+1},\rho(t_k),v(t_k))=v(t_{k+1}).
%\end{align*}
Then, the multiple shooting method requires finding the values of $\rho,v$ at temporal points $\{t_k\}_{k=0}^{K-1}$, i.e., 
$$(\widetilde v^0,\widetilde \rho^1,\widetilde v^1,\cdots,\widetilde \rho^{K-1},\widetilde v^{K-1})^{T},$$ 
such that the continuity conditions hold, that is, for $k = 0, \cdots, K-2,$
\begin{align*}
F_{2k+1}(\widetilde \rho^k,\widetilde v^k,\widetilde \rho^{k+1}) %&=\rho^{k+1}(t_{k+1},\widetilde \rho^k, \widetilde v^k)-\rho^{k+1}(t_{k+1},\rho(t_k),v(t_k))\\
&=\rho^{k+1}(t_{k+1},\widetilde \rho^k, \widetilde v^k)-\widetilde \rho^{k+1}=0, \;\\
F_{2k+2}(\widetilde \rho^k,\widetilde v^k,\widetilde v^{k+1})
%&=v^{k+1}(t_{k+1},\widetilde \rho^k, \widetilde v^k)-v^{k+1}(t_{k+1},\rho(t_k),v(t_k))\\
&=v^{k+1}(t_{k+1},\widetilde \rho^k, \widetilde v^k)-\widetilde v^{k+1}=0.
\end{align*}
When $k=0$ and $k=K-1,$ the given boundary values $\rho(0)=\mu$ and $\rho(1)=\nu$ yield that 
\begin{align*}
&F_1(\mu,\widetilde v^0,\widetilde \rho^{1})=\rho^{1}(t_1, \mu, \widetilde v^0)-\widetilde \rho^{1}=0,\\
&F_{2K-1}(\widetilde \rho^{K-1},\widetilde v^{K-1},\nu)=\rho^{K}(t_{K},\widetilde \rho^{K-1}, \widetilde v^{K-1})-\nu=0.
\end{align*}
%\zhou{I changed the index in the above equation, please check whether they are correct.}
As customary, 
%In the multiple shooting formulation, we can also use root finding algorithms, like 
we use Newton's method to find the root 
$(\widetilde v^0,\widetilde \rho^1,\widetilde v^1,\cdots,\widetilde \rho^{K-1},\widetilde v^{K-1})$ of  
$F=(F_{w})_{w=1}^{2K-1}=0.$ 
To this end, we first need to remove the redundant equations for the velocity field $v$. 
The number of unknown variables in $\rho$ is $N-1=(n+1)^d-1$, which is  one fewer than the total number of nodes in $G$, 
because the total probability must be one. The number of unknowns in $S$ is $N$. 
The vector field $v$ contains the differences in $S$, hence the total number of independent variables in $v$ is also $N-1$, 
due to the connectivity of $G$. The following lemma ensures that we can always find the $N-1$ components of $v$ from which 
one can generate all the components of $v$ on the lattice graph $G$.

\begin{lm}\label{lm-con-gen}
Given a connected $d$-dimensional lattice graph $G$ and a vector field $v$ which is generated by a potential $S$ on $G$, 
there exists a subset consisting of $N-1$ components of $v$, denoted by $\widehat v=(\widehat v_w)_{w=1}^{N-1}$,
such that any $v_{ij}$ can be expressed as combination of the entries of $\widehat v$, i.e.
\begin{align}\label{con-vij}
v_{ij}=\sum_{w=1}^{N-1} a_{w} \widehat v_{w},\quad \text{where} \quad a_{w}=1, \,\ \text{or} \,\  -1, \,\ \text{or}\,\ 0\ .
\end{align}
\end{lm}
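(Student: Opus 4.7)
The plan is to exploit the fact that $v$ is a gradient field, so $v_{ij}=S_j-S_i$, together with the connectedness of $G$, by choosing $\widehat v$ to be the components of $v$ along a spanning tree of $G$.

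First, since $G=(V,E)$ is connected and has $N$ vertices, it admits a spanning tree $T=(V,E_T)$ with $|E_T|=N-1$. Fix any such $T$ (for example, one can be built greedily by breadth-first search from an arbitrary root). Enumerate the edges of $T$ as $e_1,\dots,e_{N-1}$, and for each $e_w=(i_w,j_w)\in E_T$ set $\widehat v_w:=v_{i_w j_w}=S_{j_w}-S_{i_w}$. This defines the subset of $N-1$ components of $v$ claimed in the statement.

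Next, I would show that every $v_{ij}$ with $ij\in E$ can be written as a signed sum of the $\widehat v_w$ with coefficients in $\{-1,0,1\}$. Fix $ij\in E$. Because $T$ is a spanning tree, there exists a unique simple path in $T$ from $i$ to $j$, say $i=p_0,p_1,\dots,p_r=j$, with each consecutive pair $(p_l,p_{l+1})$ being an edge of $T$. Telescoping the potential along this path gives
\begin{equation*}
v_{ij}=S_j-S_i=\sum_{l=0}^{r-1}(S_{p_{l+1}}-S_{p_l})=\sum_{l=0}^{r-1} v_{p_l p_{l+1}}.
\end{equation*}
For each $l$, the edge $\{p_l,p_{l+1}\}$ coincides with some $e_w\in E_T$, and $v_{p_l p_{l+1}}$ equals $\widehat v_w$ if $(p_l,p_{l+1})=(i_w,j_w)$ and equals $-\widehat v_w$ otherwise. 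Since a simple path in a tree uses each edge at most once, each index $w\in\{1,\dots,N-1\}$ contributes at most one term, producing exactly the representation \eqref{con-vij} with $a_w\in\{-1,0,1\}$.

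Finally, I would note that in fact this reconstruction is well-defined and unique: the skew-symmetry $v_{ji}=-v_{ij}$ is automatically respected by the telescoping, and the values $\widehat v$ on the spanning tree determine $S$ up to the single global additive constant, hence determine every $v_{ij}$. The main (only) delicate point is that the coefficients remain in $\{-1,0,1\}$ rather than in $\mathbb{Z}$, but this is immediate from the \emph{simplicity} of tree paths; no edge is traversed twice. Thus the construction is complete, with the lattice structure of $G$ playing no essential role beyond connectedness.
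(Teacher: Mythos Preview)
Your proof is correct and follows essentially the same approach as the paper: select $N-1$ edges that connect all vertices and reconstruct any $v_{ij}$ by telescoping $S$ along the resulting path. The paper phrases this as choosing a ``path passing through all nodes with exactly $N-1$ edges'' (relying on the lattice structure, and later allowing repeated traversals), whereas your spanning-tree formulation is the cleaner and more general way to state the same idea.
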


\begin{proof}
%The skew-symmetric property of $v$ leads that we could use $d(n+1)^{d-1}n$ components of $v$ to determine a vector field on $G.$ 
Since $G$ is connected, there is always a path on the graph passing through all the nodes of $G$ and with
exactly $N-1$ edges.   We denote with $\widehat v_i$ the value of $v$ on the $i$-th edge along the path. 
%By the connectivity of $G$ and 
By definition of $v_{ij}=S_{j}-S_i$, the values of $S$ can be reconstructed, up to a constant shift, along the path. 
Therefore, all entries of $v$ can be expressed as the above combination of the entries $(\widehat v_w)_{w=1}^{N-1}$.
\end{proof}

\iffalse
For convenience, we call the subset $\widehat v$ a generator of the vector field of $v$. From the proof, we observe that the generator is not unique. In fact, every path going through all nodes of $G$ with exactly $N-1$ edges provides a generator. 
\zhou{Given $\widehat v$, shall we give a formula to reconstruct $S$? This seems to be important in \eqref{re-dhs}.}
{\color{red} Assume that the edges of $\widehat v$ is $\{i_wi_{w+1}\}_{w=1}^{N-1}.$  Then Lemma \ref{lm-con-gen} gives the velocities $v_{ij}$ on each edge $ij\in E.$   We let $S_{i_1}$ to be any fixed number and get that $S_{i_{w+1}}=S_{i_w}+(\widehat v)_w$.  By the connectivity of $G$, for any node $a_l$ on $G$, there exits a road $j_{1}j_2\cdots j_z$ starting from $a_{i_w}$ to $a_{l}$, i.e., $j_1=i_w, j_z=a_{l}$. The  potential $S_{l}$ is determined by $S_{i_w}+v_{j_1j_2}+\cdots+ v_{j_{z-1}j_z}.$    }

From the above lemma, we know that there exists $(\widehat v_t)_{w=1}^{N-1}$ which could generate all the components of $v_t.$ For convenience, let us denote the corresponding edges of $(\widehat v_t)_{w=1}^{N-1}$ as $\{i_{w}i_{w+1}\}_{w=1}^{N-1}$ and the corresponding potential on nodes $i_w$ as $(S_t)_{i_w}.$ Then the reduced Wasserstein system \eqref{dhs} becomes \fi

%For convenience, we call the subset $\widehat v$ a generator of the vector field of $v$. 
From the proof, we observe that the choice of $\widehat v$ is not unique, since
every path going through all nodes of $G$ using $N-1$ edges will give a system with no redundancy. The edges could be passed multiple times. 
Let us select one such choice and denote it by $(\widehat v_w)_{w=1}^{N-1}$.
%{\color{red}
For instance, in 2-dimensional lattice graph $G$, we choose the $\widehat v$ that generates the vector field (see Fig. \ref{generator}) as follows. 
Denote every node on $G$ by $(i,j)_{i,j=1}^{n+1}$. 
For fixed $i$, $(i,j)_{j=1}^{n+1}$ becomes 1-dimensional lattice graph in the $x_2$ direction. 
Following \eqref{rhov1d}, we choose $\widehat v_{w}=v_{(i,j)(i,j+1)}$ for   $w=n\times (i-1)+j,$ 
$j=1,\cdots,n, i=1, \cdots, n+1$, which gives $(n+1)\times n$ components of $\widehat v_w.$
Because of the connectivity of $G$ relative to the $x_1$ direction,  the last $n$ components  of 
$\widehat v_w$ are chosen by $\widehat v_{w}=v_{(j,1)(j+1,1)},$ for  $w=(n+1)\times n+j,$ $j=1,\cdots,n.$ For convenience, let us denote the velocity on the related edges in this path by 
$\{v_{i_wi_{w+1}}\}_{w=1}^{N-1}=\{\widehat v_{w}\}_{w=1}^{N-1}.$
%}

\begin{figure}
	\centering
		\includegraphics[width=0.9\linewidth]{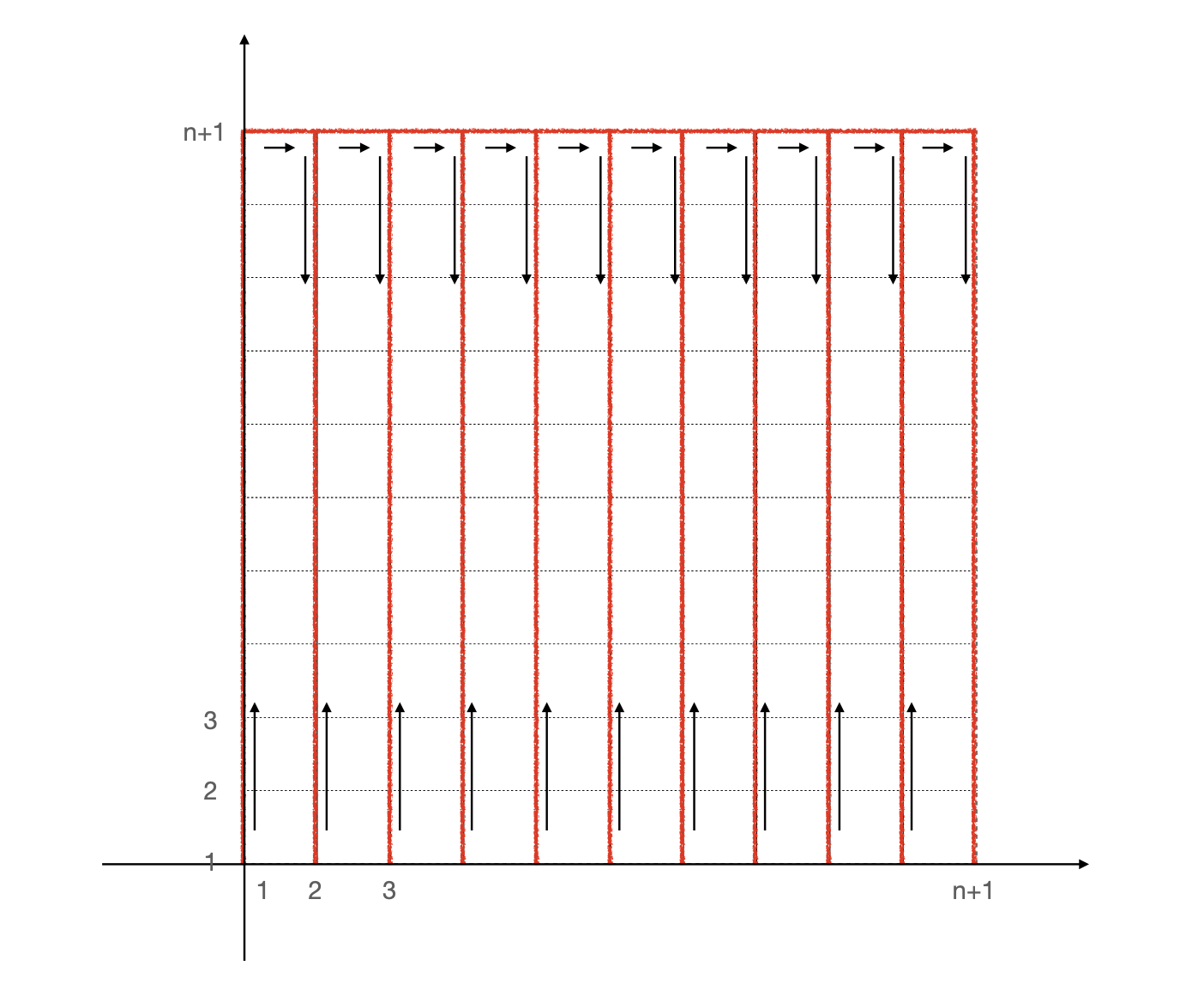}
	\caption{The edges (in red) of $\hat{v}$ that generates of the velocity in 2D lattice graph. The path is indicated by the arrows. Clearly, many edges are passed twice.}
	\label{generator}
\end{figure}

Then the reduced Wasserstein system \eqref{dhs} becomes
\begin{equation}\label{re-dhs}\begin{split}
\frac {d\rho_{i_w}^{k+1}}{d t} &= \sum_{j\in N(i_w)} v_{ji_w}^{k+1}\theta_{i_wj}(\rho),\\ %\nonumber
\frac {d \widehat v_{i_w}^{k+1}}{dt}& =\frac 12\sum_{j\in N(i_w)}\frac 1{(\delta x)^2} (v_{i_w,j}^{k+1})^2 \frac {\partial \theta_{i_wj}(\rho)}{\partial \rho_{i_w}}\\
& \, -\frac 12\sum_{m\in N(i_{w+1})}\frac 1{(\delta x)^2} (v_{i_{w+1,m}}^{k+1})^2 \frac {\partial \theta_{i_{w+1}j}(\rho)}{\partial \rho_{i_{w+1}}},
\end{split}\end{equation}
where $v_{ij}$ satisfies \eqref{con-vij} and the unknowns are $(\rho,\widehat v)$ with 
\begin{align*}
&\rho^{k+1}(t_k,\rho(t_k),\widehat v(t_k))=\rho(t_k), \; \rho^{k+1}(t_{k+1},\rho(t_k),\widehat v(t_k))=\rho(t_{k+1}), \\
&\widehat v^{k+1}(t_k,\rho(t_k),\widehat v(t_k))=\widehat v(t_k), \; 
\widehat v^{k+1}(t_{k+1},\rho(t_k),\widehat v(t_k))=\widehat v(t_{k+1}).
\end{align*}

We apply the multiple shooting method to \eqref{re-dhs}, i.e., we look for the root 
$Z=(\widehat v^0,  \rho^1,\widehat v^1,\cdots, \rho^{K-1},\widehat v^{K-1})$ of $F$ defined by 
\begin{equation}\label{mul-shoot0}\begin{split}
F_{2k+1}(\rho^k,\widehat v^k, \rho^{k+1})
&=\rho^{k+1}(t_{k+1}, \rho^k, \widehat v^k)- \rho^{k+1}=0, \;\\ % \nonumber
F_{2k+2}( \rho^k,\widehat v^k, \widehat v^{k+1})
&=\widehat v^{k+1}(t_{k+1}, \rho^k, \widehat v^k)-\widehat v^{k+1}=0, \; k\le K-2, \\ % \nonumber
F_{2K-1}( \rho^{K-1},\widehat v^{K-1}, \rho^K)&=\rho^{K}(t_{K-1}, \rho^{K-1}, \widehat v^{K-1})-\nu=0,
\end{split}\end{equation}
where $\rho^0=\mu, \rho^K=\nu.$

Use of Newton's method to solve \eqref{mul-shoot0} gives
\begin{align}\label{mul-shoot}
A^{(m)}\Delta Z^{(m)}=-F^{(m)},
\end{align}
where $m$ is the iteration index, $\Delta Z^{(m)}=Z^{(m+1)}-Z^{(m)},$ 
$$Z^{(m)}=(v^{0,(m)},\rho^{1,(m)},v^{1,(m)},\cdots, v^{K-1,(m)},\rho^{K-1,(m)})^T,$$ 
$F^{(m)}=(F_1(Z^{(m)}),F_2(Z^{(m)}),\cdots, F_{2K-1}(Z^{(m)}))^T,$ and 
$A^{(m)}$ is the Jacobian of $F$, whose structure is as follows, where the $X$ correspond
to nonzero $(N-1) \times (N-1)$ matrices:
$$
\begin{pmatrix}
X & X & 0 & 0 & 0 &  &  &  &  &  \\
X & 0 & X & 0 & 0 &  &  &  &  &  \\
0 & X & X & X & 0 &   &  &  &  &  \\
0 & X & X & 0 & X &   &  &  &  &  \\
 &  &  & X & X & X  & 0 &  &  &  \\
 &  &  & X & X & 0  & X &  &  &  \\
 & & & & & & & \ddots & \ddots & \\
 &  &  & & & & X & X & X & 0  \\
 &  &  & & & & X & X & 0 & X  \\
 &  &  & & & & & & X & X  
\end{pmatrix}\ .
$$
\iffalse
Then the continuity condition of the solution yields the following $2(K-1)+1$ equations,
\begin{align}\label{con-pv}
F_1(D^0,C^1)&=\rho^{1}(t_1,D^0)-C^1=0,\\\nonumber
F_2(D^0,D^1)&=v^{1}(t_1,D^0)-D^1=0,\\\nonumber
F_3(D^1,C^1,C^2)&=\rho^{2}(t_2,D^1,C^1)-C^2=0,\ \dots,\\\nonumber
F_{2K-3}(D^{K-2},C^{K-2},C^{K-1})&=\rho^{K-1}(t_{K-1},D^{K-2},C^{K-2})-C^{K-1}=0,\\\nonumber
F_{2K-2}(D^{K-2},C^{K-2},D^{K-1})&=v^{K-1}(t_{K-1},D^{K-2},C^{K-2})-D^{K-1}=0,\\\nonumber
F_{2K-1}(D^{K-1},C^{K-1})&=\rho^{K}(t_K, D^{K-1},C^{K-1})-C^K=0.
\end{align}
\fi
Omitting the  superscript $m$ in the expressions of $A^{(m)}$, the blocks $A_{ij}, i,j=1,\cdots, 2K-1,$ are
easily seen to be the following.  For $i=2,\cdots, K-1,$
\iffalse
\begin{align*}
A^{(m)}_{2(i-1)+1,2(i-1)}&= \frac {\partial \rho^{i}(t_i,v^{i-1,},\rho^{i-1,(m)})}{\partial v^{i-1,(m)}},\;  A^{(m)}_{2(i-1)+1,2(i-1)+1}=\frac {\partial \rho^{i,(m)}(t_i,v^{i-1,(m)},\rho^{i-1,(m)})}{\partial \rho^{i-1,(m)}},\\
A^{(m)}_{2i,2(i-1)}&=\frac {\partial v^{i,(m)}(t_i,v^{i-1,(m)},\rho^{i-1,(m)})}{\partial v^{i-1,(m)}},\;  A^{(m)}_{2i,2(i-1)+1}= \frac {\partial v^{i,(m)}(t_i,\rho^{i-1,(m)},\rho^{i-1,(m)})}{\partial \rho^{i-1,(m)}},\\
A^{(m)}_{2(i-1)+1,2i} &=-I, A^{(m)}_{2i,2i+1}=-I,
\end{align*}
\begin{align*}
A^{(m)}_{11}&=\frac {\partial \rho^{1,(m)}(t_1,v^{0,(m)})}{\partial v^0}, A^{(m)}_{12}=-I,\\
A^{(m)}_{21}&=\frac {\partial v^{1,(m)}(t_1,v^{0,(m)})}{\partial v^0}, A^{(m)}_{23}=-I,
\end{align*}
\iffalse
\begin{equation*}
\begin{pmatrix}
A^{(m)}_{2(i-1)+1,2i} & A^{(m)}_{2(i-1)+1,2i+1} \\
A^{(m)}_{2i,2i} & A^{(m)}_{2i,2i+1} \\
\end{pmatrix}= 
\begin{pmatrix}
-I & 0 \\
0 & -I
\end{pmatrix},
\end{equation*}
\fi
and 
\begin{align*}
A^{(m)}_{2K-1,2K-2}&=\frac {\partial \rho^{K,(m)}(t_{K},v^{K-1,(m)},\rho^{K-1,(m)})}{\partial v^{K-1}}, A^{(m)}_{2K-1,2K-1}=\frac {\partial \rho^{K,(m)}(t_K,v^{K-1,(m)},\rho^{K-1,(m)})}{\partial \rho^{K-1}}.\\
\end{align*}
\fi
\begin{align*}
A_{2(i-1)+1,2(i-1)}&= \frac {\partial \rho^{i}(t_i,v^{i-1},\rho^{i-1})}{\partial v^{i-1}},\;  
A_{2(i-1)+1,2(i-1)+1}=\frac {\partial \rho^{i}(t_i,v^{i-1},\rho^{i-1})}{\partial \rho^{i-1}},\\
A_{2i,2(i-1)}&=\frac {\partial v^{i}(t_i,v^{i-1},\rho^{i-1})}{\partial v^{i-1}},\;  
A_{2i,2(i-1)+1}= \frac {\partial v^{i}(t_i,\rho^{i-1},\rho^{i-1})}{\partial \rho^{i-1}},\\
A_{2(i-1)+1,2i} &=-I, A_{2i,2i+1}=-I,\\
A_{11}&=\frac {\partial \rho^{1}(t_1,v^{0})}{\partial v^0}, A_{12}=-I,\\
A_{21}&=\frac {\partial v^{1}(t_1,v^{0})}{\partial v^0}, A_{23}=-I,
\end{align*}
\iffalse
\begin{equation*}
\begin{pmatrix}
A_{2(i-1)+1,2i} & A_{2(i-1)+1,2i+1} \\
A_{2i,2i} & A_{2i,2i+1} \\
\end{pmatrix}= 
\begin{pmatrix}
-I & 0 \\
0 & -I
\end{pmatrix},
\end{equation*}
\fi
and 
\begin{align*}
A_{2K-1,2K-2}&=\frac {\partial \rho^{K}(t_{K},v^{K-1},\rho^{K-1})}{\partial v^{K-1}}, 
A_{2K-1,2K-1}=\frac {\partial \rho^{K}(t_K,v^{K-1},\rho^{K-1})}{\partial \rho^{K-1}}.
\end{align*}
\iffalse
\zhou{shall we omit some super indices in the above matrix definition to make it better for reading?}
\fi
Below we show invertibility of $A^{(m)}$ for $\delta t$ sufficiently small.

\begin{tm}\label{MSconv}
Let $(\rho,v)$ be the unique solution of \eqref{rhov} and $Z^*=(v(0),\rho(t_1),$ $v(t_1),\cdots,\rho(t_{K-1}),v(t_{K-1}))^{T}$ be the exact solution
evaluated at the multiple shooting points.
Assume that the initial vector $Z^{(0)}$ is sufficiently close to $Z^*$, i.e., 
$|Z^{((0)}-Z^*|={\mathcal O}(\epsilon)$ for $\epsilon>0$ sufficiently small, 
$(\rho,v)$ is continuously differentiable in $[0,1]$ satisfying $(\rho,v)\in \mathcal C^2_b([0,1];\mathbb R^N)\times \mathcal C^2_b([0,1];\mathbb R^N\times \mathbb R^N)$ and $\min\limits_{t\in [0,T]}\min\limits_{i=1}^N \rho_i\ge c>0$, and that  
$\frac{\partial \rho(1,\rho^{0},v^{0})}{\partial v^{0}}$ is invertible.
Then, Newton's method of the multiple shooting method \eqref{mul-shoot} is quadratically convergent to $Z^*$ for $\delta t$ sufficiently small.
\end{tm}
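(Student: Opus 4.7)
The plan is to reduce the proof to two standard ingredients of Newton's method convergence theory: (i) invertibility of the Jacobian $A(Z^*)$ at the exact solution (so $A^{(m)}$ is invertible for $Z^{(m)}$ in a neighborhood by continuity), and (ii) Lipschitz continuity of the Jacobian on that neighborhood, together with a uniform bound on $\|A(Z^*)^{-1}\|$. Given the smoothness hypothesis $(\rho,v)\in \mathcal C^2_b$ and the lower bound $\rho_i\ge c>0$, the right-hand side of \eqref{re-dhs} is $\mathcal C^2$ with bounded derivatives along the exact trajectory, hence the shooting maps $(\rho^{k+1}(t_{k+1},\cdot,\cdot),\widehat v^{k+1}(t_{k+1},\cdot,\cdot))$ are $\mathcal C^2$ with uniformly bounded first and second derivatives in a neighborhood of $Z^*$. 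Thus item (ii) follows routinely, and the real work is item (i).

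For invertibility of $A$, I would exploit the block-banded structure and perform a forward block elimination using the $-I$ blocks in the super-diagonal positions. Label the local transition blocks on $[t_k,t_{k+1}]$ by
\[
G_k=\begin{pmatrix} \dfrac{\partial \rho^{k+1}(t_{k+1})}{\partial \rho^k} & \dfrac{\partial \rho^{k+1}(t_{k+1})}{\partial \widehat v^k}\\[4pt] \dfrac{\partial \widehat v^{k+1}(t_{k+1})}{\partial \rho^k} & \dfrac{\partial \widehat v^{k+1}(t_{k+1})}{\partial \widehat v^k}\end{pmatrix},
\]
so that row pair $(2k+1,2k+2)$ of $A$ reads $[\cdots\ G_k\ -I\ \cdots]$. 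Using each $-I$ to eliminate the block columns corresponding to $(\rho^{k+1},\widehat v^{k+1})$ sequentially for $k=1,\ldots,K-1$, the system collapses into a single block equation for $\widehat v^0$:
\[
\Pi_{K-1}\cdots \Pi_1 \cdot \Pi_0\, \widehat v^0 \;=\; \text{RHS},
\]
where $\Pi_0$ is the $v^0$-column of $G_0$ (since $\rho^0=\mu$ is fixed) and each $\Pi_k$ is the corresponding propagator block. By the semigroup property of the flow, the product $\Pi_{K-1}\cdots \Pi_0$ is precisely the $\rho$-component, differentiated with respect to $\widehat v^0$, of the flow map from $t=0$ to $t=1$; that is, it equals $\dfrac{\partial \rho(1,\rho^0,v^0)}{\partial v^0}=\hat J(1,v^0)$, which is invertible by hypothesis.

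The only subtlety in the elimination is that we also need each local $\partial \widehat v^{k+1}(t_{k+1})/\partial \widehat v^{k}$ (or an analogous block) to be invertible in order to push the elimination through without dividing by a singular block. This is where $\delta t$ being sufficiently small is used: by Lemma \ref{lemma1} (and its obvious analogue in $d$ dimensions, since the variational equation starts from the identity for $\partial \widehat v/\partial \widehat v^0$ and from zero for $\partial \rho/\partial \widehat v^0$), for small $\delta t$ we have $\partial \widehat v^{k+1}(t_{k+1})/\partial \widehat v^{k}= I+\mathcal O(\delta t)$, which is invertible. This is the main technical obstacle, and it is resolved precisely by the short-time invertibility established in Lemma \ref{lemma1}.

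With $A(Z^*)$ invertible and $F\in \mathcal C^2$ with bounded second derivatives in a neighborhood of $Z^*$, the classical Newton--Kantorovich theorem applies: there exists $\epsilon_0>0$ such that whenever $|Z^{(0)}-Z^*|\le \epsilon_0$, the sequence $\{Z^{(m)}\}$ generated by \eqref{mul-shoot} is well-defined, stays in a neighborhood of $Z^*$ (on which $A^{(m)}$ is uniformly invertible by continuity), and satisfies $|Z^{(m+1)}-Z^*|\le C\,|Z^{(m)}-Z^*|^2$ for a constant $C$ depending on $\|A(Z^*)^{-1}\|$ and the bound on $\nabla^2 F$. This yields the claimed quadratic convergence.
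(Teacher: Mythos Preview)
Your approach is essentially the paper's: both arguments reduce invertibility of the multiple-shooting Jacobian $A$ to invertibility of $\hat J(1,v^0)=\partial\rho(1,\rho^0,v^0)/\partial v^0$ by exploiting the block structure, and then identify the reduced matrix with $\hat J(1,v^0)$ via the chain rule for the flow. The paper packages this as a Schur complement/determinant identity---partitioning $A=\begin{pmatrix}A_{11}'&A_{12}'\\0&A_{22}'\end{pmatrix}$, noting that $A_{12}'$ (the first $2K-2$ block rows, last $2K-2$ block columns) is block lower triangular with $-I$ diagonal blocks, and computing $A_{22}'(A_{12}')^{-1}A_{11}'$ as the product of transition Jacobians---while you do the equivalent block forward elimination on the $-I$'s.

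One correction is worth making. Your ``subtlety'' is a false alarm: since your pivots are precisely the $-I$ blocks, the elimination never requires inverting any of the local transition blocks such as $\partial\widehat v^{k+1}(t_{k+1})/\partial\widehat v^k$. Equivalently, in the paper's language, $A_{12}'$ is invertible outright (its determinant is $\pm 1$) with no condition whatsoever on $\delta t$, so the reduction to the product of transition Jacobians goes through unconditionally. At $Z^*$ that product equals $\hat J(1,v^0)$ exactly by the semigroup property, as you correctly say; the paper works instead at $Z^{(0)}$ and carries $\mathcal O(\epsilon)+\mathcal O(\delta t)$ perturbation terms, which is where its smallness requirement on $\delta t$ enters. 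Your version, arguing invertibility at $Z^*$ and then invoking continuity plus Newton--Kantorovich, is actually cleaner on this point and does not need Lemma~\ref{lemma1} at all. The rest of your argument (Lipschitz Jacobian from the $\mathcal C^2_b$ hypothesis, hence quadratic convergence) is standard and correct.
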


\begin{proof}
By standard Newton's convergence theory, it will be enough to prove the      
invertibility of Jacobian matrix $A^{(0)}$ for appropriately small  
$\epsilon$ and $\delta t$.  
Rewrite $A^{(0)}$ in partitioned form 
$\begin{pmatrix}
A_{11}' & A_{12}'\\
O_{N-1,N-1} & A_{22}'
\end{pmatrix},$
where $A_{11}'$ is a $(2K-2)n \times n$ matrix, $A_{12}'$ is a $(2K-2)n\times (2K-2)n$ matrix, 
%$A_{21}'$ is a $N-1\times N-1$ zero-matrix, 
and $A_{22}'$ is  a $(N-1)\times (2K-2)(N-1)$ matrix. 
Using the property of determinant for the partitioned matrix and the fact that $\det(A_{12}')=1$, 
and writing $A$ in lieu of $A^{(0)}$, we have 
%\zhou{I am confused by the matrix sub-indices in this part. I thought $A'_{21}$ is zero, and $A'_{11}$ is not, maybe I am wrong, please check those terms.} {\color{red} Yes, the subindex in the first inequality is not correct since we have changed the rows.}
\begin{align*}
\det(A)&=\det \begin{pmatrix}
0_{N-1\times N-1} & A_{22}'\\
A_{11}' & A_{12}'
\end{pmatrix} \\
&=\det(A_{12}')\det(0_{N-1\times N-1}-A_{22}' (A_{12}')^{-1}A_{11}')\\
&=(-1)^{N-1} \det(A_{22}' (A_{12}')^{-1}A_{11}').
\end{align*}
So, we are left to show that $\det(A_{22}' (A_{12}')^{-1}A_{11}')\neq 0$. 
The structure of  $A_{12}'$  implies that 
\begin{equation}\label{inv}\begin{split}
A_{22}' (A_{12}')^{-1}A_{11}' & =(\frac {\partial \rho^{K,(0)}}{\partial \rho^{K-1,(0)}},\frac {\partial \rho^{K,(0)}}{\partial v^{K-1,(0)}}) \\
 & \prod_{i=2}^{K-1} \begin{pmatrix}
\frac {\partial \rho^{i,(0)}}{\partial \rho^{i-1,(0)}} & \frac {\partial \rho^{i,(0)}}{\partial v^{i-1,(0)}}  \\\nonumber
\frac {\partial v^{i,(0)}}{\partial \rho^{i-1,(0)}} & \frac {\partial v^{i,(0)}}{\partial v^{i-1,(0)}}
\end{pmatrix}
(\frac {\partial \rho^{1,(0)}}{\partial v^{0,(0)}},\frac {\partial v^{1,(0)}}{\partial v^{0,(0)}})^{T},
\end{split}\end{equation}
where $\rho^{i,(0)}=\rho^{i}(t_K,\rho^{i-1,(0)},v^{i-1,(0)})$, 
$v^{i,(0)}=v^{k}(t_K,\rho^{i-1,(0)},v^{i-1,(0)}),$ for $i=2,\cdots,K$, and 
$v^{1,(0)}=v^1(t_1,v^{0,(0)}),$ $\rho^{1,(0)}=\rho^1(t_1,v^{0,(0)})$.

Now, % we claim that 
invertibility of the Jacobian matrix $A$ (or $A_{22}' (A_{12}')^{-1}A_{11}'$) follows from invertibility of the Jacobian
matrix at the exact solution $\frac{\partial \rho(t_K,\rho^{0},v^{0})}{\partial v^{0}}.$  To see this,
due to \eqref{inv}, the continuous differentiability of the exact solution, and the assumption that $|Z^{(0,(m))}-Z^*|=\mathscr O(\epsilon)$, 
we have that 
\begin{align*}
A_{22}' (A_{12}')^{-1}A_{11}'
=\frac{\partial \rho(t_K,\rho^{0},v^{0})}{\partial v^{0}}+\mathscr O(\epsilon)+\mathscr O(\delta t).
\end{align*}
Therefore, the  invertibility of $\frac{\partial \rho(t_K,\rho^{0},v^{0})}{\partial v^{0}}$ with $t_K=1$
implies the invertibility of the Jacobian matrix $A$. 
Combining with the assumption that $\epsilon$ and $\delta$ are sufficiently small, we obtain that $A^{(0)}$
is invertible in a neighborhood of $Z^*$, which, together with the boundedness assumption on $\rho,v$, implies 
the quadratic convergence of Newton's method. 
\end{proof}

\iffalse
\begin{align*}
&A_{22}' (A_{12}')^{-1}A_{11}'\\
&=(\frac {\partial \rho^{K,(m)}}{\partial \rho^{K-1,(m)}},\frac {\partial \rho^{K,(m)}}{\partial v^{K-1,(m)}}) \prod_{i=2}^{K-1} \begin{pmatrix}
\frac {\partial \rho^{i,(m)}}{\partial \rho^{i-1,(m)}} & \frac {\partial \rho^{i,(m)}}{\partial v^{i-1,(m)}}  \\
\frac {\partial v^{i,(m)}}{\partial \rho^{i-1,(m)}} & \frac {\partial v^{i,(m)}}{\partial v^{i-1,(m)}}
\end{pmatrix}
(\frac {\partial \rho^{1,(m)}}{\partial v^{0,(m)}},\frac {\partial v^{1,(m)}}{\partial v^{0,(m)}})^{T}\\
&= \frac {\partial \rho^{K,(m)}}{\partial \rho^{K-1,(m)}}\frac {\partial \rho^{1,(m)}}{\partial v^{0,(m)}}+\frac {\partial \rho^{K,(m)}}{\partial v^{K-1,(m)}}\frac {\partial v^{1,(m)}}{\partial v^{0,(m)}}+\max_{i}\mathscr O(|t_{i}-t_{i-1}|^2)\\
 &= (\frac {\partial \rho^{1,(m)}}{\partial v^{0,(m)}}+
 \frac {\partial \rho^{K-1,(m)}}{\partial v^{K-2,(m)}})+\max_{i}\mathscr O(|t_{i}-t_{i-1}|^2)\\
 &= (t_1-t_0) B_{11}(0,(m))+
 (t_{K}-t_{K-1})B_{11}(K-1,(m))+\max_{i}\mathscr O(|t_{i}-t_{i-1}|^2).
\end{align*}

Due to the fact that $B_{11}(j,(m))$ are lower-triangular matrix, the assumption that $\rho^{j,(m)}>0$, and $\max_{i}|t_i-t_{i-1}|$ is sufficient small, we have that the Jacobian matrix $A$ is invertible. 
\fi

\begin{rk}
Of course, the initial value problems for the multiple shooting method must be integrated numerically.  We have not accounted for this
in Theorem \ref{MSconv}.  In principle, many choices are available to integrate these initial value problems; we have
used the symplectic integrators  developed in \cite{CDZ20} for Wasserstein Hamiltonian flows,  without regularization 
by Fisher information.
\end{rk}
%
%
%We would like to remark that
%one could use other root finding algorithms to solve \eqref{mul-shoot0}. When the subinterval is small, one could also use other explicit numerical method, like the Euler method, to save computational cost.
%In our numerical computations, we %compute the density and velocity, as well as the Jacobi matrix in each subinterval. When 
%compute the Jacobi matrix and update $F$ by using symplectic integrators for Wasserstein Hamiltonian flow (see e.g. \cite{CDZ20}), so that many desirable properties of \eqref{dhs} can be inherited. %When the subinterval is small, one could also use other explicit numerical method, like the Euler method, to save computational cost. 

\subsection{Continuation multiple shooting strategy}\label{cmss}
In light of Theorem \ref{MSconv}, and notwithstanding the need for small $\delta t$, the
multiple shooting method requires the initial guess to be near the exact solution $Z^*$.
%since it requires that the initial guess $Z^0$ is very close to $Z^*$.
\iffalse
\zhou{I am confused by the notions we used for $\rho^0$ and $\rho^1$, and the boundary conditions in general. Here it seems that $\rho^K$ becomes the ending boundary condition. We need to examine the whole paper to make it more consistent.} {\color{red} I unified these notations by letting $\rho(1)=\rho^{K}=\rho^*$.}
\fi
To make the method robust with respect to the initial guess, we adopt a standard
continuation strategy by introducing a density function $f(\mu,\nu, \lambda)$, 
which is smooth with respect to a homotopy parameter $\lambda \in [0,1]$ and satisfies
%by taking a continuous interpolation of the boundary value problem \eqref{dhs} or \eqref{re-dhs}, denoted by $f($ such that $\rho(0)=\rho^0, \rho(1)=f(\rho^0,\rho^{K},\lambda).$
%Here $\lambda\in [0,1]$, the function $f$ is continuous with respect to $\lambda$ and satisfies 
\begin{align}\label{homotopy}
f(\mu,\nu,0)=\mu,\quad f(\mu,\nu,1)=\nu.
\end{align}
The specific choice of $f$ in \eqref{homotopy}
depends on the initial and terminal distributions $\mu$ and $\nu$.   We illustrate below with
two typical situations.
\begin{itemize}
\item[(a)]\label{(a)}  
``Gaussian-type'' densities.
If $\mu(x)=K_0\exp(-c|x-b_0|^2)$ and $\nu(x)=K_1\exp(-c|x-b_1|^2)$, with 
$\int_{\mathcal O}\mu dx=\int_{\mathcal O}\nu dx=1$, we choose 
$$f(\mu,\nu,\lambda)(x)=K_{\lambda}\exp(-c|x-b_0-\lambda(b_1-b_0)|^2)\ $$ 
with $K_{\lambda}$ chosen so that $\int_{\mathcal O}f dx=1$. 
For $\mu=K_0\exp(-c_0|x-b_0|^2),\nu=K_1\exp(-c_1|x-b_1|^2)$, we choose
$$f(\mu,\nu,\lambda)(x)=K_{\lambda}\exp(-(c_0+\lambda(c_1-c_0))|x-b_0-\lambda(b_1-b_0)|^2)\ $$ 
with $K_{\lambda}$ chosen so that $\int_{\mathcal O}f dx=1$. 

\item[(b)]\label{(b)}
For general $\mu$ and $\nu$, we choose $f$ as the linear interpolant of $\mu$ and $\nu$, which is
automatically normalized.  That is, we take 
$$
f(\mu, \nu, \lambda) = (1-\lambda) \mu + \lambda \nu.
$$
\end{itemize}

\begin{rk}
For the success of our method, it is actually important that the densities be strictly positive (see Theorem \ref{MSconv}).
For this reason, and especially when the densities $\mu$ and $\nu$ are exponentially decaying (like Gaussians do),
%In the implementation of Algorithm \ref{alg:Framwork}, to ensure the positivity of density functions, 
we add a small positive number, which we call {\emph{shift}}, 
to the densities $\mu$ and $\nu$ and re-scale them so to keep the total probabilities equal to $1$.    
In the numerical tests in Section \ref{NumExs}, these are the values $r_0$ and $r_1$ we use.
\end{rk}

Using $f$, we consider the system  \eqref{re-dhs} %(equivalently for \eqref{dhs}) 
with $\lambda$ dependent boundary conditions given by $\rho(0) = \mu$ and $\rho(1) = f(\mu, \nu,\lambda)$. 
Obviously, the problem with $\lambda_0=0$ is trivial to solve (the identity map), and it can be used
as initial guess for the solution at the value $\lambda_1=\Delta \lambda$.  
By gradually increasing $\lambda$ from $0$ to $1$, we eventually obtain the solution for \eqref{dhs} with boundary conditions 
$\mu$ and $\nu$, which is the original Wasserstein geodesic problem we wanted to solve.
This basic idea to use the solution with smaller value of $\lambda$
as the initial guess for the boundary value problem with larger value of $\lambda$ is well understood, and universal. %{\color{red}
In our context, it is important to note that it works because of  OT problem always has an optimal map as long as $\mu$ and 
$f(\mu,\nu,\lambda)$ satisfy $\int_{\mathbb R^d}|x|^2\mu dx,\int_{\mathbb R^d}|x|^2 f(\mu,\nu,\lambda) dx<+\infty$
(e.g., see \cite{San15}). In turns, this implies the existence of $v$ or $S$ %in weak sense 
(up to $\rho_t$-measure $0$ sets) for the BVP problem.
%}
%More precisely, let $\{\lambda_k\}_{k}$ be an increasing sequence and $\delta \lambda_k$ be the difference between 
%$\lambda_{k+1}-\lambda_k$. We begin with a small parameter $\lambda_0$ and a initial guess 
%\begin{align}\label{ini}
%Z^0_{\lambda_0}:
%=(v^{0,(0)},\rho^{1,(0)},v^{1,(0)},\cdots, v^{K-1,(0)},\rho^{K-1,(0)})^T.
%\end{align}
% For example, we may take $v^{k,(0)}, k\le K-1$ as an equal constant vector, $\rho^{k,(0)}$ as the linear interpolation of $\rho^0=\mu$ and $\rho^K=\nu$ at the corresponding time grids, i.e.,
%\begin{align*}
%\rho^{k,(0)}=t_k(\rho^0)+(1-t_k)(\rho^K), \; 1\le k \le K-1.
%\end{align*}
%Then applying the multiple shooting method \eqref{mul-shoot0} and Newton's method to update for
%$Z^{(1)}$. Repeating the above procedure until the Newton's method stops, we obtain $Z_{\lambda_0}^*$ which is the numerical approximation of density and velocity at each time grids with the terminal condition $\rho(1)=f(\mu,\nu,\lambda_0)$.
% Next we use the value of $Z_{\lambda_0}^*$ as the initial guess for \eqref{re-dhs} with terminal condition $\rho(1)=f(\mu,\nu,\lambda_1).$
In particular, this fact guarantees that there is 
a finite sequence $\{\lambda_j\}_{j\le L}$,  $\lambda_L=1,$  and $Z_{\lambda_L}^*$ will be our 
approximation to the exact solution $(\rho,v)$ at the multiple shooting points.
%{\color{red} We begin with a small parameter $\lambda_0$ and an initial guess 
\begin{align}\label{ini-guess}
Z_{\lambda_0}^0:=(v^{0,(0)},\rho^{1,(0)},\cdots, v^{K-1,(0)},\rho^{K-1,(0)})^{\mathcal T}.
\end{align}
For instance, we may take $v^{k,(0)},k\le K-1$, as constant vectors, $\rho^{k,(0)}, k\le K-1,$ from linear interpolation 
of $\rho^0=\mu$ and $\rho^1= f(\mu,\nu,\lambda_0),$ i.e.,
\begin{align*}
\rho^{k,(0)}=t_k\mu+(1-t_k) f(\mu,\nu,\lambda_0), k\le K-1.
\end{align*}
%}

Finally, throughout all of our experiments,  we 
enforced the following stopping criterion for the Newton iteration:
\begin{equation}\label{NewtStops}
\frac {|F(Z^{(m+1)})-F(Z^{(m)})|}{F(Z^{(m)})}< 10^{-5}\ .
\end{equation}

We summarize the steps in the following algorithm.

\begin{algorithm}[htb] 
\caption{} 
\label{alg:Framwork} 
\begin{algorithmic}[1] 
\REQUIRE 
Multiple shooting points $t_k$, $k=0,\dots, K$, with $t_0=0$ and $t_K=1$.
Discrete densities $\mu$, $\nu$, on the spatial grid of size
$\delta x,$  continuation parameter $\lambda$, max-number of Newton's iterations
{\tt Maxits}.
\ENSURE 
The minimizer $Z^{*}$ at the multiple shooting points;
\STATE Follow \eqref{ini-guess} and produce a initial guess $Z^{(0)}_{\lambda_0}$; 
%\FOR{$k=0,1,2,\cdots$, until $\lambda_k=1$}
\STATE Until $\lambda_j=1$ or too many continuation steps, {\bf do}
\FOR{$m=1,2,\cdots, {\tt Maxits}$, while \eqref{NewtStops} not satisfied }
\STATE   Solve $J^{(m)}_{\lambda_j} d^{(m)}=- F(Z^{(m)}_{\lambda_j})$;\
\STATE $Z^{(m+1)}_{\lambda_j}=Z^{(m)}_{\lambda_j}+ d^{(m)}$;\
\ENDFOR

\STATE $\lambda_{j+1}=\lambda_j+\Delta \lambda$ (see Remark \ref{ContSteps}); \\
\STATE put $Z^{0}_{\lambda_{j+1}}=Z^*_{\lambda_j}$ as the new initial guess; 
\STATE $j+1 \to j$,  go back to step 2.
%\ENDFOR
\end{algorithmic}
\end{algorithm}

\begin{rk}
Based on the output of Algorithm \ref{alg:Framwork}, the Wasserstein distance (or the Hamiltonian of \eqref{dhs}) can be easily obtained. 
From the first component $v^{0,*}$ of $Z^*$, we can reconstruct the initial values for $S^0$ as follows.
The first component $v^{0,*}=(\widehat v_w)_{w=1}^{N-1}$, $\{i_{w}i_{w+1}\}_{w=1}^{N-1}$ generates the initial vector field. We first     
define the potential $S$ on a fixed node $i_0$.  Due to the connectivity of $G$, using $S_{i_{w+1}}=v_{i_w,i_{w+1}}+S_{i_w},$ we get the other initial values of $S^0$.
Then the Wasserstein distance can be evaluated as $W(\mu,\nu)=\sqrt{2 H(\mu,S^0)}$.
\end{rk}

\begin{rk}[Barrier value for density]
On rare occasions, we observed that during the Newton's iteration the updates became negative, leading to a failure.
To avoid this phenomenon, we adopted a simple strategy, whereby we created a barrier for
the values of the densities, and reset to this barrier any value which went below it.   In our tests
in Section \ref{NumExs}, use of this artifical barrier was needed only for Examples \ref{Ex5} and \ref{Ex10}. 
To witness, in Example \ref{Ex5}, we used the barrier at $10^{-5}$, and in Example \ref{Ex10}
the barrier was set at $10^{-3}$.  Clearly with this strategy the total mass of the numerical solution is not exactly
equal to $1$, but the error incurred in the total mass is at the same level of the barrier value.
\end{rk}

\begin{rk}[Choosing continuation steps]\label{ContSteps}
We implemented a very simple and conservative continuation strategy.
In all of our tests, we first try to take $\lambda=1$, to see whether the continuation 
is really needed.   If the method does not work without continuation,
we begin with a value $\lambda_0$ of $\lambda$ for which  multiple shooting
works (e.g., we
usually take $\lambda_0=0.1$ as initial step), and choose a value 
$\Delta \lambda=\frac {1-\lambda_0}{L}$ with given $L$ (e.g., $L=10$ or $20$ is our
usual choice).  We then try to continue by taking steps of size $\Delta \lambda$,
though if the Newton's multiple shooting fails we decrease $\Delta \lambda$
by dividing the remaining interval by $L$ again and/or increase the value of $L$ by
doubling it.
In all tests of Section \ref{NumExs}, except  Examples \ref{Ex1} and \ref{Exnew0}, the 
continuation strategy was needed. 
\end{rk}

\begin{rk}[Choosing homotopy $f(\mu,\nu,\lambda)$]\label{HomoFtn}
Finally, for all tests
with Gaussian type densities $\mu,\nu$, we use the Gaussian interpolation (a) in subsection \ref{cmss} for $f(\mu,\nu,\lambda).$ 
For other examples, we use the linear interpolation (b) in subsection \ref{cmss} for $f(\mu,\nu,\lambda).$
To exemplify, in Example \ref{Ex5},   we take $f(\mu,\nu,\lambda)$ as the normalization of 
$\exp(-5(x_2-0.5-1.95\lambda)^2-5(x_1-1.5-0.95\lambda)^2)+\exp(-5(x_2-0.5-1.95\lambda)-5(x_1-1.5+0.95\lambda))^2+r$ and
obtain a sequence of $\lambda$'s starting from $\lambda_0=0.1,$ with $\Delta \lambda=0.9/20$.
\end{rk}

\iffalse
\begin{rk}
One may use the damped least-squares Newton's method to solve the multiple shooting method. For the numerical tests in section \ref{NumExs}, we use the classical Newton's method.
\end{rk}
\fi

\iffalse
\begin{algorithm}[ht!]
\caption{}
\begin{algorithmic}[1]
\Require 
\Output 

\For{$f_\theta$ steps}

\For{$\phi_\omega$ steps}
\State Sample $\{(z_k,t_k)\}_{k=1}^N$ from $\rho_1\otimes U(0,1)$; (U(0,1) denotes the uniform distribution on (0,1); $\rho_1$ denotes start dist.)
\State Compute $x_k = z_k+t_k\nabla f_\theta(z_k)$;
\State Sample $\{z'_k\}_{k=1}^M$ from $\rho_1$(\text{start dist});
\State Sample $\{x'_k\}_{k=1}^M$ from $\rho_2$(\text{end dist});
\State Update (grad descent) $\phi_\omega$ by:
\begin{equation*}
    \nabla_{\theta}\Biggl\{\frac{1}{N}\sum_{k=1}^N \big(\partial_t \phi_\omega(x_k,t_k) + \frac{1}{2}|\nabla \phi_\omega (x_k,t_k)|^2\big)+\frac{1}{M}\sum_{k=1}^M(\phi_\omega(z'_k,0)-\phi_\omega(x_k' , 1))\Biggr\}
\end{equation*}
\EndFor
\State Sample $\{(z_k,t_k)\}_{k=1}^N$ from $\rho_1\otimes U(0,1)$;
\State Compute $x_k = z_k+t_k\nabla f_\theta(z_k)$;
\State Sample $\{z'_k\}_{k=1}^M$ from $\rho_1$;
\State Sample $\{x'_k\}_{k=1}^M$ from $\rho_2$;
\State Update (grad descent) $f_\theta$ by:
\begin{equation*}
 \nabla_{\theta}\Biggl\{-\frac{1}{N}\sum_{k=1}^N \big(\partial_t \phi_\omega(x_k,t_k) + \frac{1}{2}|\nabla \phi_\omega(x_k,t_k)|^2\big)+\frac{1}{M}\sum_{k=1}^M (\phi_\omega(x_k' , 1) - \phi_\omega(z_k', 0))\Biggr\}
\end{equation*}
\EndFor
\end{algorithmic}
\end{algorithm}
 \fi

\section{Numerical experiments}\label{NumExs}
In this section, we apply Algorithm \ref{alg:Framwork} to approximate the solution of several OT problems. 
Throughout the experiments, %we adopted the terminal time $T=1$, $\alpha_k=1$ and 
the Jacobian in Newton's method is approximated by using a 1st order divided difference approximation of the derivatives. 
The spatial boundary conditions for the density functions are set to be homogeneous Neumann  boundary conditions for all
experiments except for Example \ref{Ex1}, which is subject to periodic boundary conditions.  
Except for this Example \ref{Ex1}, we do not have the exact solutions of our test problems, so
we display the evolution of the density from $\mu$ to $\nu$ as indication of the quality of the approximation.

\begin{example}\label{Ex1}
Here the 
spatial domain is the 2-torus $\mathbb  T^2=[0,1]\times[0,1],$ subject to periodic boundary conditions.
Following the approach in \cite{San15}, we 
define a smooth function $\phi(x_1,x_2)=\beta \sin(2\pi x_1)\sin(2\pi x_2)$,
with $\beta=\frac{1}{64}(2\pi)^{-2},$ take 
initial density $\mu(x_1,x_2)=\det(I-D^2\phi(x_1,x_2))$ and target density $\nu$ is the uniform distribution on $\mathbb T^2$. 
%The exact solution can be obtained i
In this case, the exact initial velocity can be explicitly given:
$$v^0(x_1,x_2)=2\pi \beta (\cos(2\pi x_1)\sin(2\pi x_2),  \sin(2\pi x_1)\cos(2\pi x_2)),$$
and in Table 1 we measure the approximation error of our method, with respect to the spatial grid-size.
%We present the approximate error on the initial velocity of the shooting method 
%In Table 1, we report on results of experiments for different spatial step size $dx$.  
As it turns out, this was a very easy problem to solve, and single shooting with a
quasi-Newton approach (only one Jacobian matrix was computed and factored and then used across all
iterations) solved it adequately.  There was 
no need of adopting a continuation strategy, and we took 160 integration steps from $0$ to $1$.
%is $K=160$ for all experiments, and the shooting points are equispaced. 
% this example, we use the quasi-Newton method, 
%together with single shooting strategy. 
About 90\% of the computation time was spent on calculating the Jacobian 
at the initial guess.
%{\color{red}
From Table 1, we observe 1st order convergence with respect to both 
$L^2$ and sup norms, i.e., $\|\widehat v^0-v^0\|_{l^{\infty}}, \|\widehat v^0-v^0\|_{L^2}$, where $\widehat v$ is the initial function
on the grids solved by single shooting method, and $l^{\infty},L^2$ denote the discrete sup norm and $L^2$ norm respectively.
This is in agreement with the semi-discretization scheme we used. 
%}
		
\begin{table}
\begin{tabular}{|c|c|c|c|c|} 
\hline 
dx & Maximum Error & $L^2$-Error & Iterations \\
\hline  
1/16 &  0.00120 & 0.00068 & 4 \\
1/32  & 0.00057 & 0.00034 & 5  \\
1/64 & 0.00003 &  0.00017 & 6  \\
1/128 & 0.000019 & 0.000086 & 11  \\
\hline 
\end{tabular}
\caption{The error in the velocity for Example \ref{Ex1}}
\end{table}

\end{example}

\subsection{1D numerical experiments} 
Below we present results on 1-D OT problems, with one or both densities of Gaussian types.
Namely, the initial
and terminal distributions $\mu$ and $\nu$ are normalizations of 
\begin{equation}\label{1d-example}
\widehat \mu=\exp(-a_0(x-b_0)^2)+r_0,\;
\widehat \nu=\exp(-a_1(x-b_1)^2)+r_1,
\end{equation}
scaled so that $\int_\OOO \mu dx=\int_\OOO \nu dx=1.$  (Here, $\OOO$ is a subinterval
of the real line.)
%The next three Examples \ref{Ex2}-\ref{Ex4} are instances of reversibility of  our schemes.

\begin{example}\label{Ex2}
Here we look at the performance of the multiple shooting method when varying the 
(truncation of the real line to the) finite interval $\OOO$, and the shift number $r$.
The parameters of initial and terminal distributions $\mu,\nu$ in \eqref{1d-example}
are $a_0=a_1=15,$ $b_0=0.4,b_1=1.4$.
We take $K=60$ multiple shooting points, spatial  
step size $dx=3\times10^{-2}$, $N=300$ time steps per subinterval,
$r_0=r_1=0.0001$ in \eqref{1d-example}, and consider the intervals
$\mathcal O=[0,2]$ or $[-0.5,2.5].$   In Fig. \ref{evo-rhp}, we 
plot the evolution of density.  The top figures refer to $\OOO=[0,2]$ and
show distortion in the density evolution. The bottom row refers to $\OOO=[-0.5,2.5]$
and shows that the computation is more faithful when the truncated domain is large enough. 

\begin{figure}
	\centering
	\subfigure {
		\includegraphics[width=0.6\linewidth]{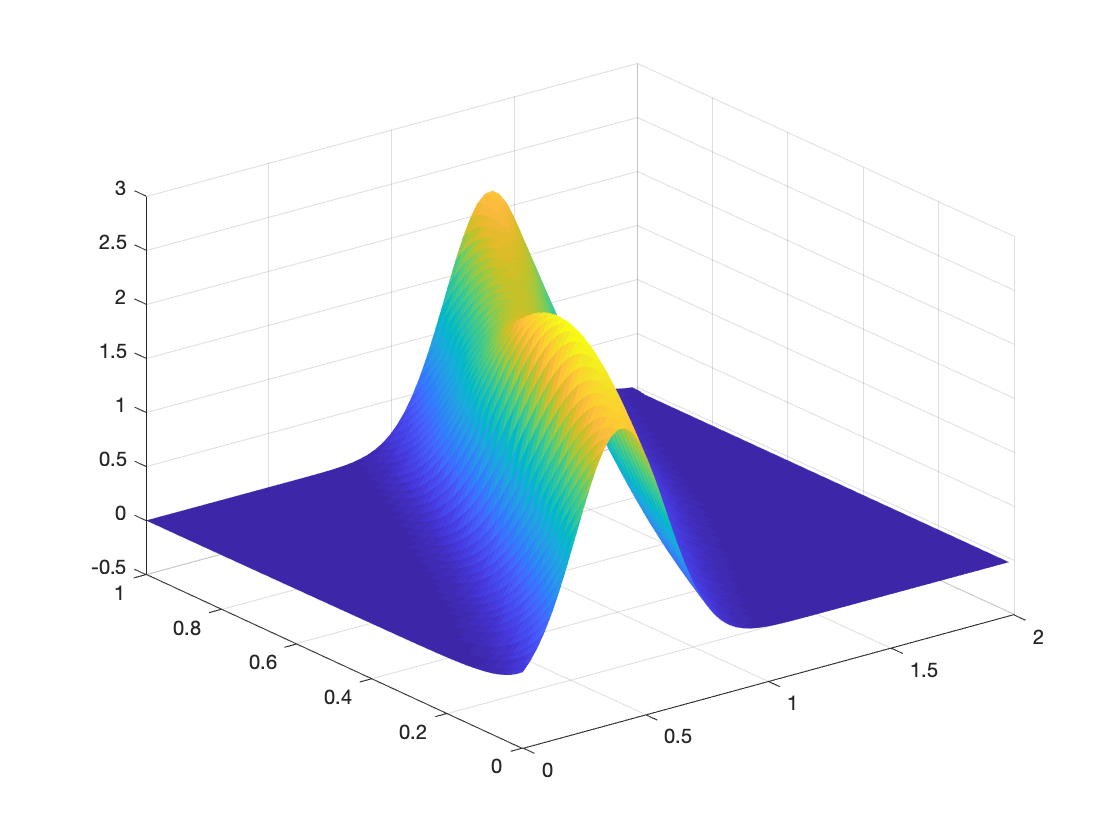}
		\includegraphics[width=0.6\linewidth]{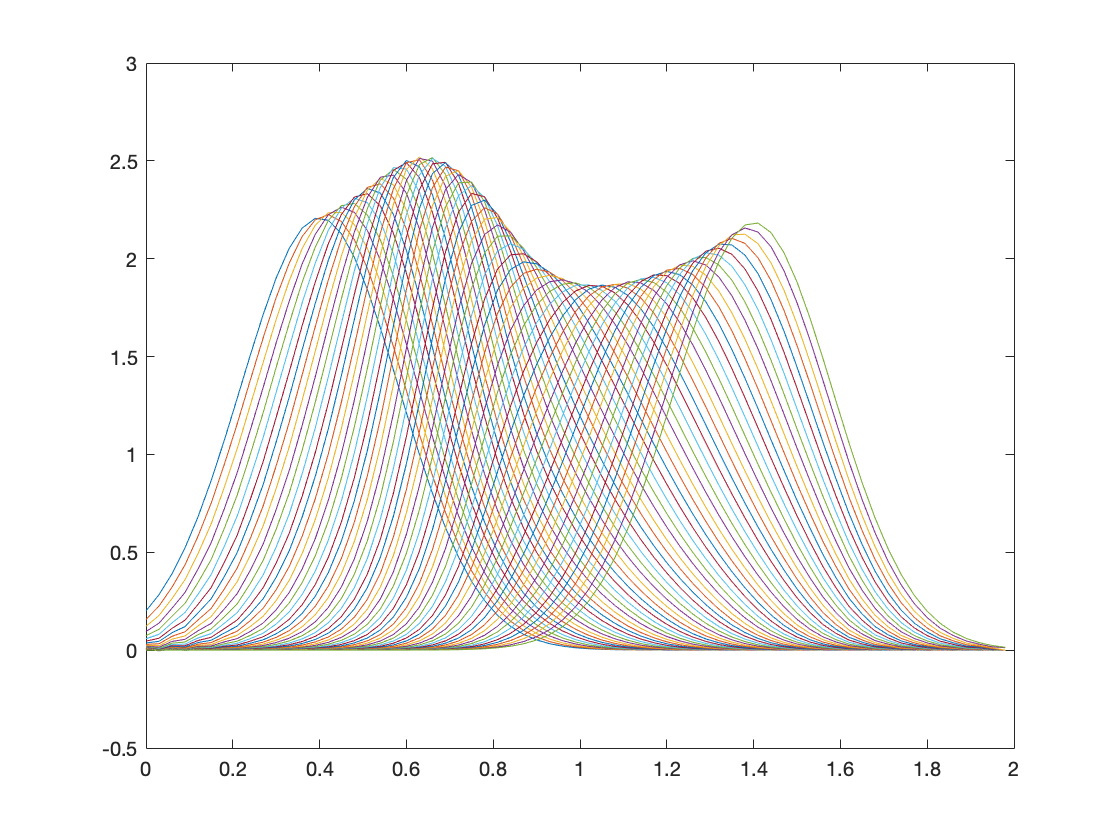}
	}
	\subfigure {
		\includegraphics[width=0.6\linewidth]{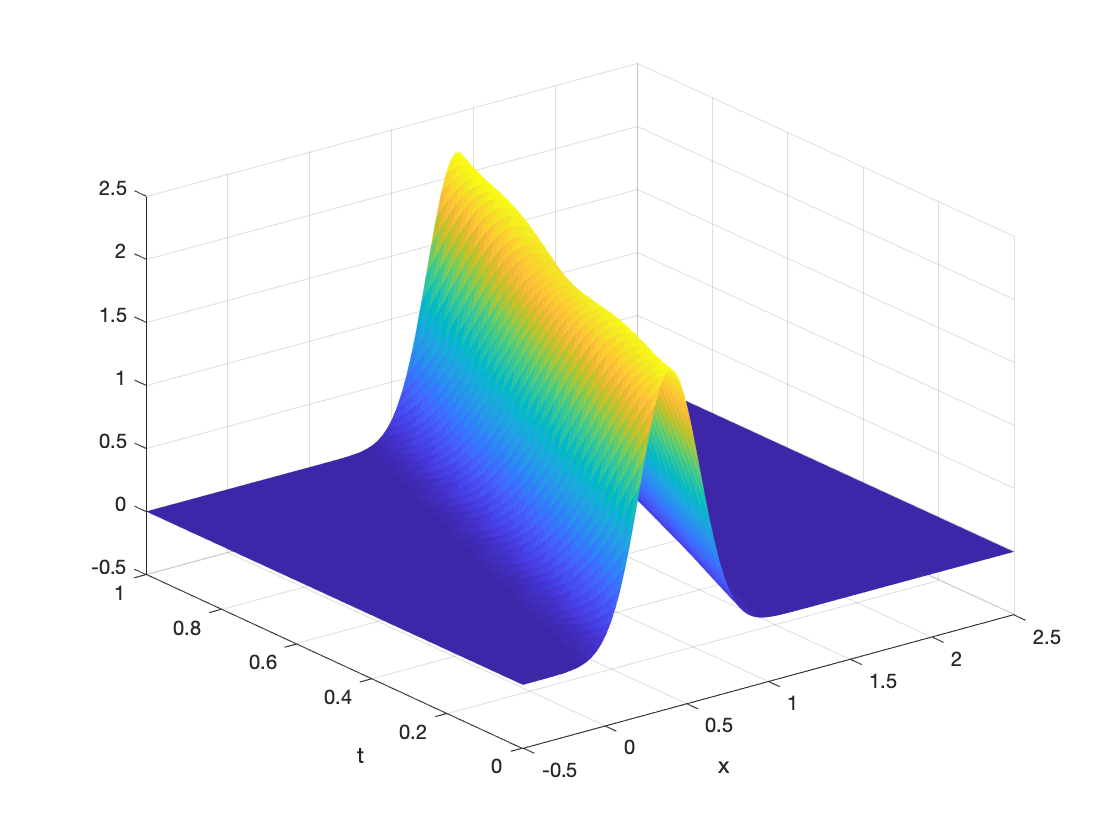}
		\includegraphics[width=0.6\linewidth]{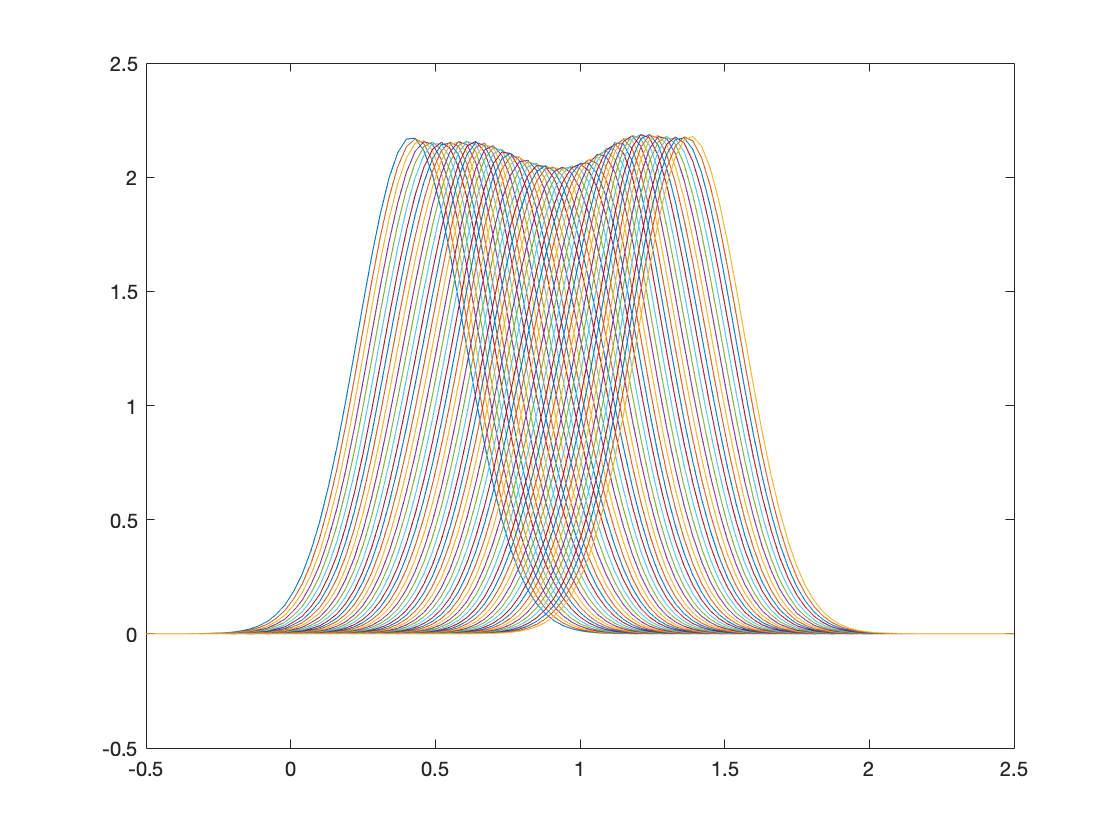}
	}
	\centering 
	\caption{Example \ref{Ex2}: evolution of $\rho(t)$ for truncated interval $[0,2]$ (top) and
	$[-0.5,2.5]$ (bottom).}
	\label{evo-rhp}
	
\end{figure}

\iffalse
In the second experiment,  we fix $\mathcal O=[-3,6]$, $dx=7\times 10^{-2},$ $K=80,$ and $N=100$,
and look at the impact of using different shift numbers $r_0$ and $r_1$ when applying the multiple shooting method.
In Fig. \ref{shiftingneum}, we plot the density evolution for $r_0=r_1=0.01,0.001,0.0001$. 
We observe that when 
the shift number is not sufficiently small, oscillatory behavior in the density evolution arises.

\begin{figure}
	\centering
	\subfigure {
		\includegraphics[width=0.6\linewidth]{r001per.png}
		\includegraphics[width=0.6\linewidth]{r0001per.png}
	}
	\includegraphics[width=0.6\linewidth]{r00001per.png}
	\centering 
	\caption{the evolution of probability given $\mu$ and $\nu$ in Example 2 with different 
		shift numbers ($r=0.01, 0.001$, and $0.0001$ respectively).}
	\label{shiftingneum}
\end{figure}
\fi

\end{example}

\begin{example}\label{Ex3}
Here  $\mathcal O=[0,2]$, 
the initial distribution is the uniform distribution 
$\mu=\frac 12$ and the terminal distribution $\nu$ is the normalized Gaussian 
density as the $\hat{\nu}$ used in Example \ref{Ex2} with $a_1=25, b_1=1,r_1=0.$ 
The number of multiple shooting points is $K=60,$ the space stepsize 
$dx=5 \times10^{-2}$
and  we take $N=20$ integration steps for subinterval. 
Fig. \ref{shootp1central} shows the density evolution.

\begin{figure}
	\centering
	\subfigure {
		\includegraphics[width=0.6\linewidth]{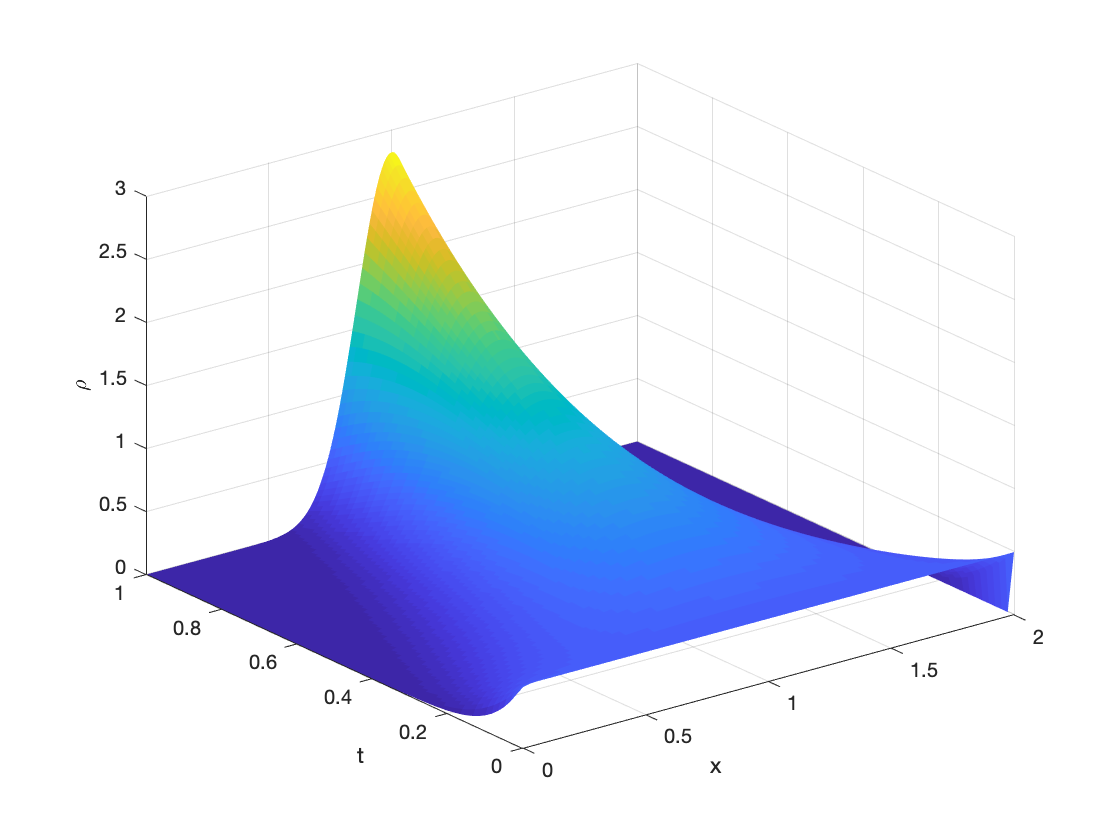}
		\includegraphics[width=0.6\linewidth]{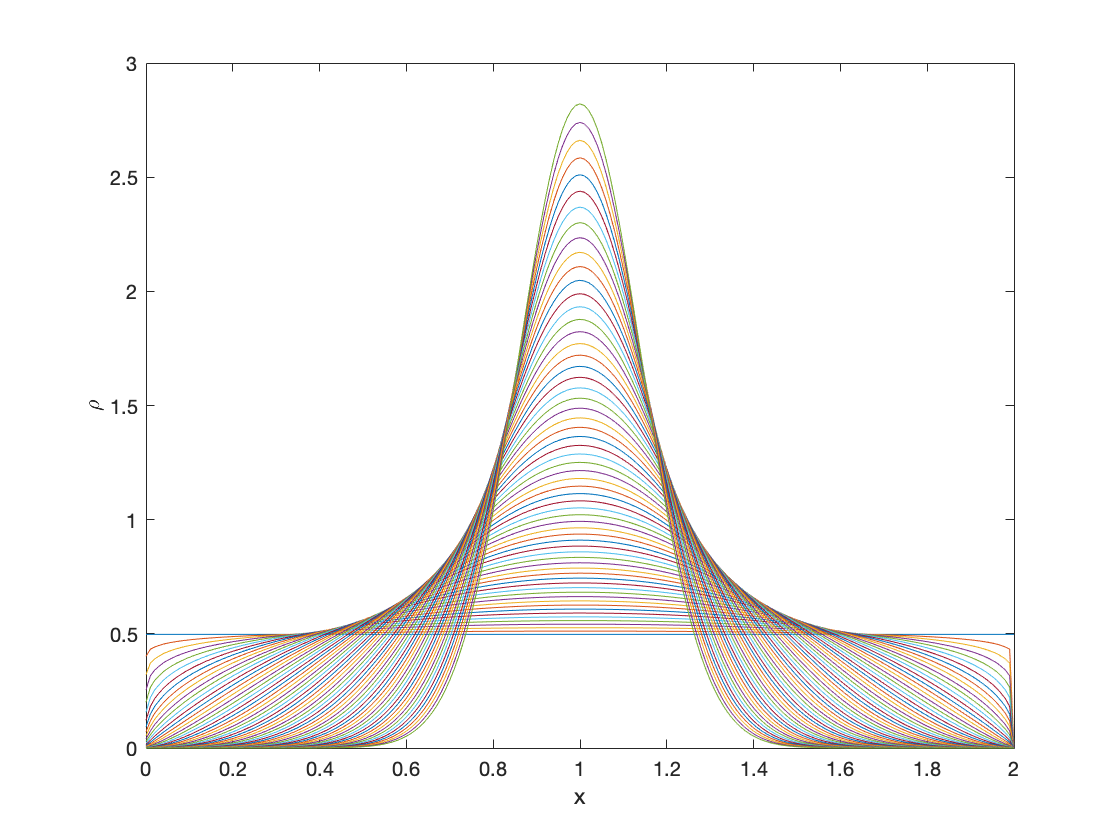}
	}
	
	\centering 
	\caption{the evolution of probability given $\mu$ and $\nu$ in Example 3}
	\label{shootp1central}
\end{figure}

\end{example}

\begin{rk}
In general, we observed that when we refine the spatial step size, the number of multiple shooting subintervals must 
increase in order to maintain non-negativity of the density at the temporal grids,
and a successful completion of our multiple shooting method, whereas the number of integration steps on
each subinterval is not as critical.  See Table 2 for results on Example \ref{Ex3},
which are typical of the general situation.
\end{rk}

\begin{table}
	
	\begin{tabular}{|c|c|c|c|c|} 
		\hline 
		$dx$ & $K$ & $N$ & success \\
		\hline  
		1/16 &  10 & 20 & $\surd$ \\
		1/32 & 10 &  40 & $\surd$  \\
		1/64 & 10 & 80 & $\surd$  \\
		1/128 &  10 & 160 & $\times$\\
		1/128 &  10 & 320 & $\times$\\
		\hline 
	\end{tabular}
	\begin{tabular}{|c|c|c|c|c|} 
		\hline 
		$dx$ & $K$ & $N$ & success \\
		\hline  
		1/16 &  10 & 20 & $\surd$ \\
		1/32  & 20 & 20 & $\surd$  \\
		1/64 & 20 &  20 & $\times$  \\
		1/64 & 40 &  20 & $\surd$  \\
		1/128 & 40 & 20 & $\surd$  \\
		\hline 
	\end{tabular}
	
	\caption{The relationship between $dx$, $K$ and $N$ in Example \ref{Ex3}.}
\end{table}

\begin{example}\label{Ex4}
This is similar to Example \ref{Ex2}, but the Gaussian has a much greater variance.
Let $\mathcal O=[-0.5,2.5]$, $dx=4\times10^{-2},$ $K=80$, $N=200$, and fix
the parameters of initial and terminal Gaussian distributions $\mu,\nu$ in \eqref{1d-example}
are $a_0=a_1=50,$ $b_0=0.4,b_1=1.4,$ $r_0=r_1=0.0001.$ 
The evolution of the density is shown in Fig. \ref{shootevop050}, and the sharper behavior of the
density evolution with respect to Figure \ref{evo-rhp} is apparent.

\begin{figure}
	\centering
	\subfigure {
		\includegraphics[width=0.6\linewidth]{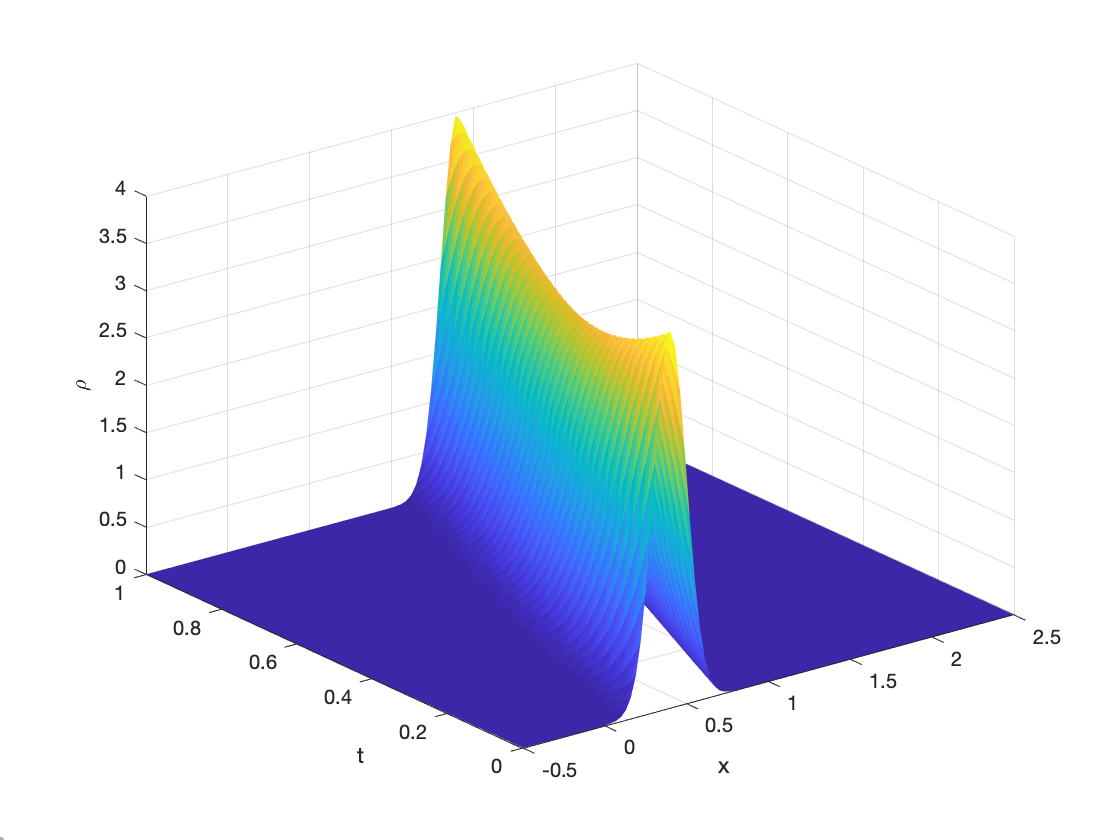}
		\includegraphics[width=0.6\linewidth]{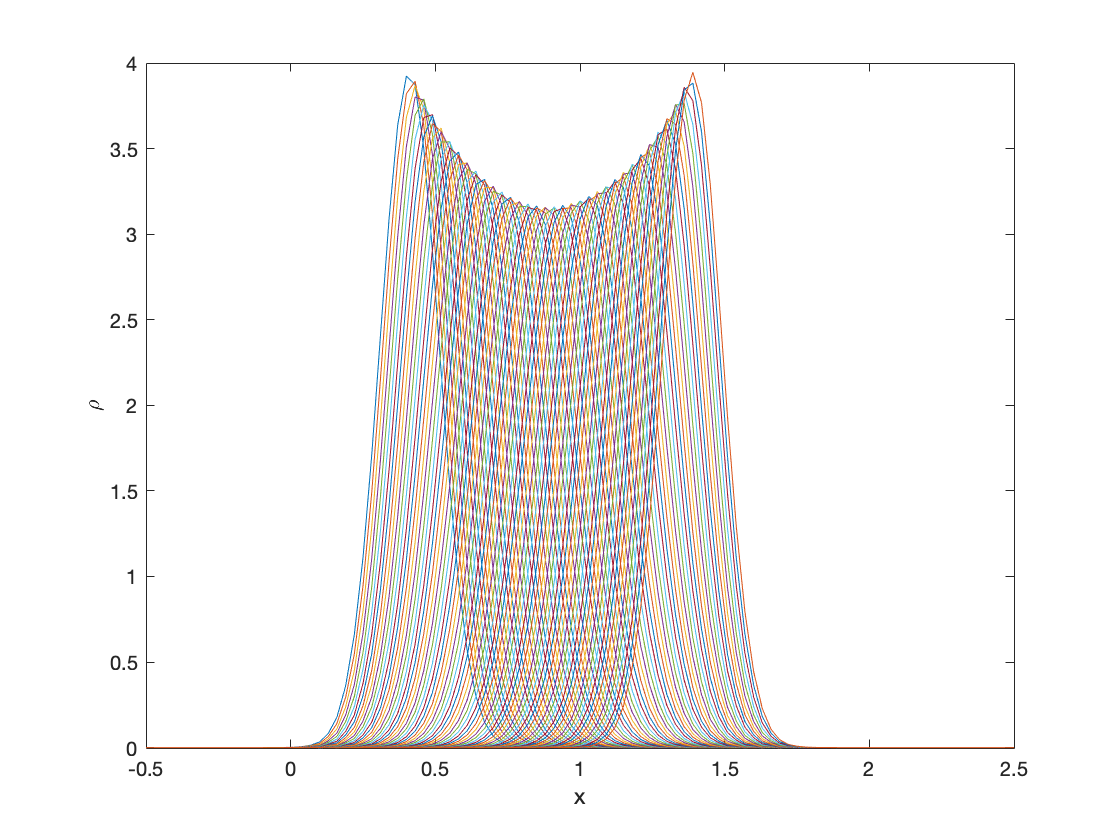}
	}
	\centering 
	\caption{Evolution of probability density in Example \ref{Ex4} }
	\label{shootevop050}
\end{figure}

\end{example}

\begin{example}\label{Exnew0}
This example is used to test Gaussian type distributions $\mu$ and $\nu$ with different variances.
Let $\mathcal O=[-0.5,2.5]$, $dx=4\times10^{-2},$ $K=80$, $N=40$, and let 
the parameters of initial and terminal Gaussian distributions $\mu,\nu$
are $a_0=15, a_1=10,$ $b_0=0.8,b_1=1.6,$ $r_0=r_1=0.0001.$  
The evolution of the density is shown in Figure  \ref{figureEx5}.
In this problem, we also exemplify the impact of the shifting number; as it can be
seen in Figure \ref{figureEx5},  if the shifting number is not sufficiently small
($r_0=r_1=0.01,$ in this case), one ends up with spurious oscillatory behavior
(presently, in $x=[0.4,0.8]$ and $[1.7,2.1]$).

\begin{figure}
	\centering
	\subfigure {
		\includegraphics[width=0.6\linewidth]{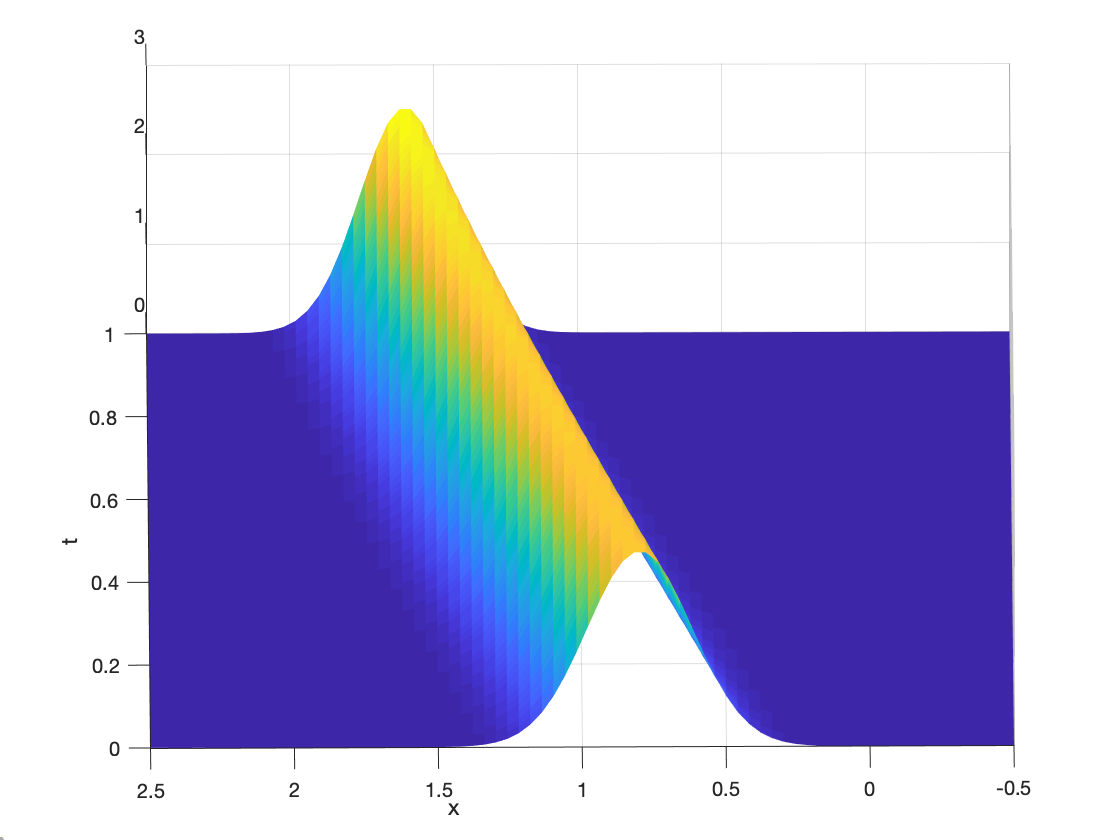}
		\includegraphics[width=0.6\linewidth]{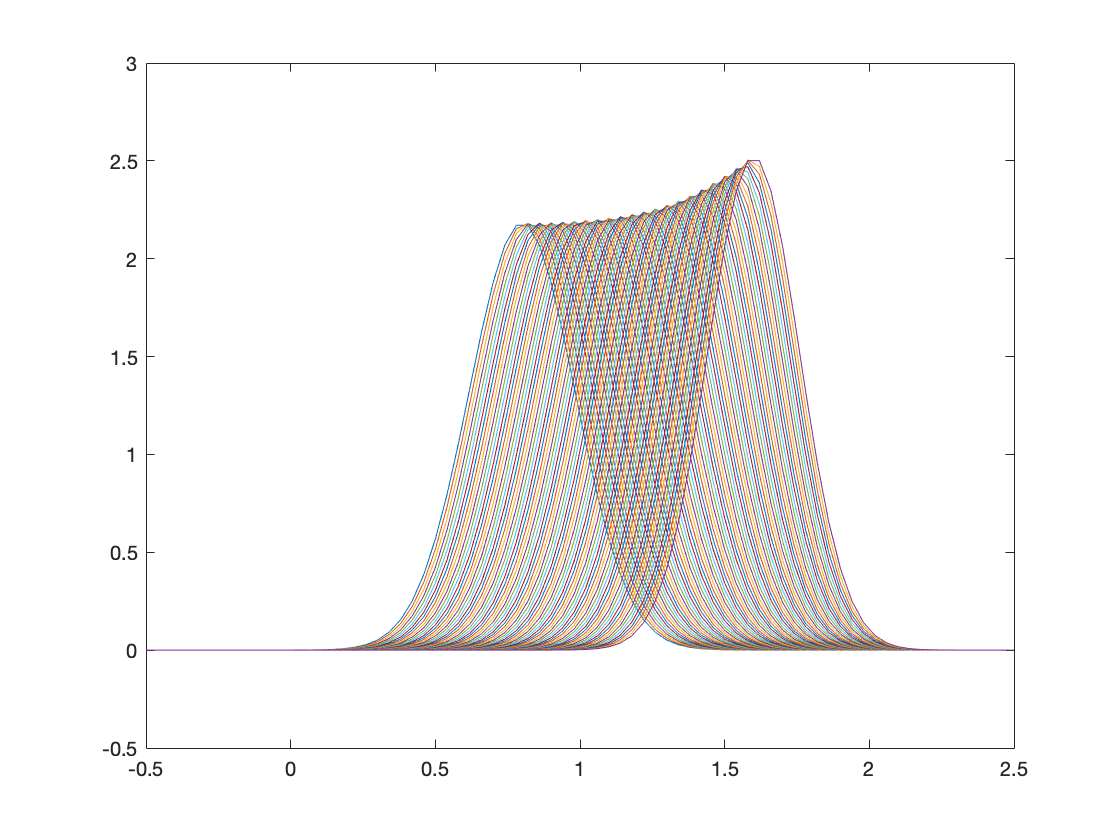}
	}
	\centering 
	\subfigure {
		\includegraphics[width=0.6\linewidth]{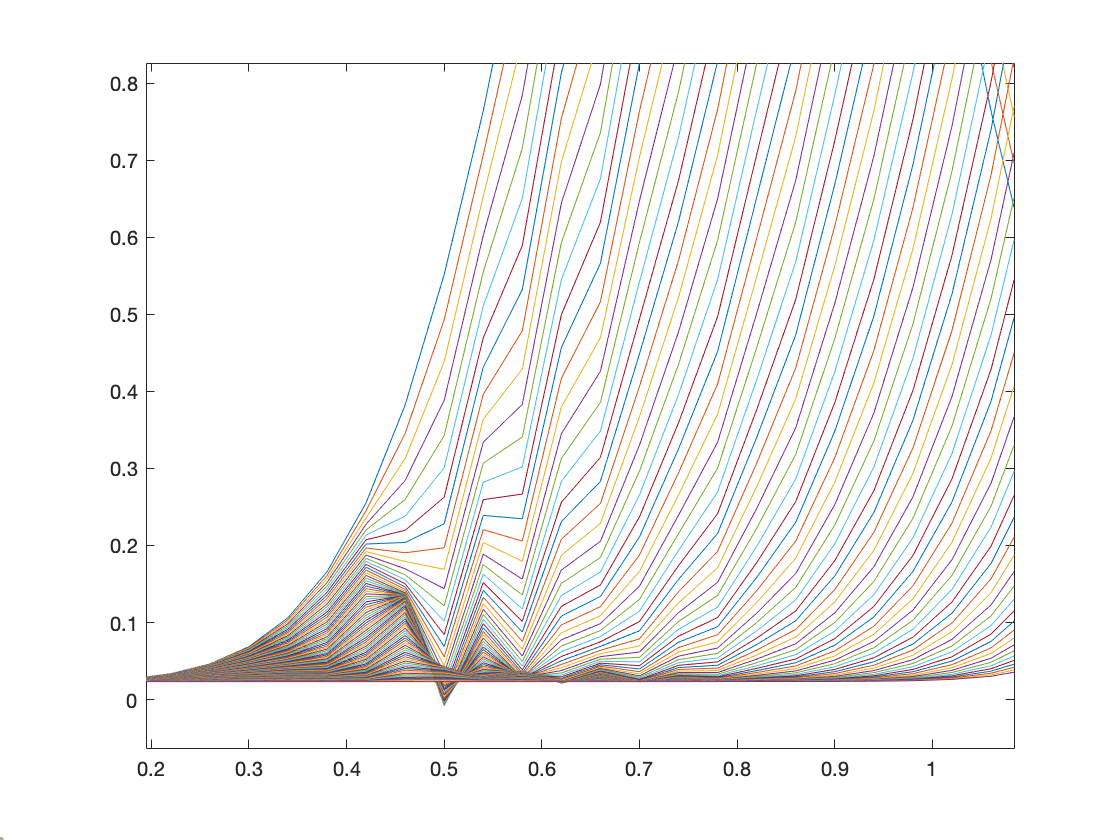}
		\includegraphics[width=0.6\linewidth]{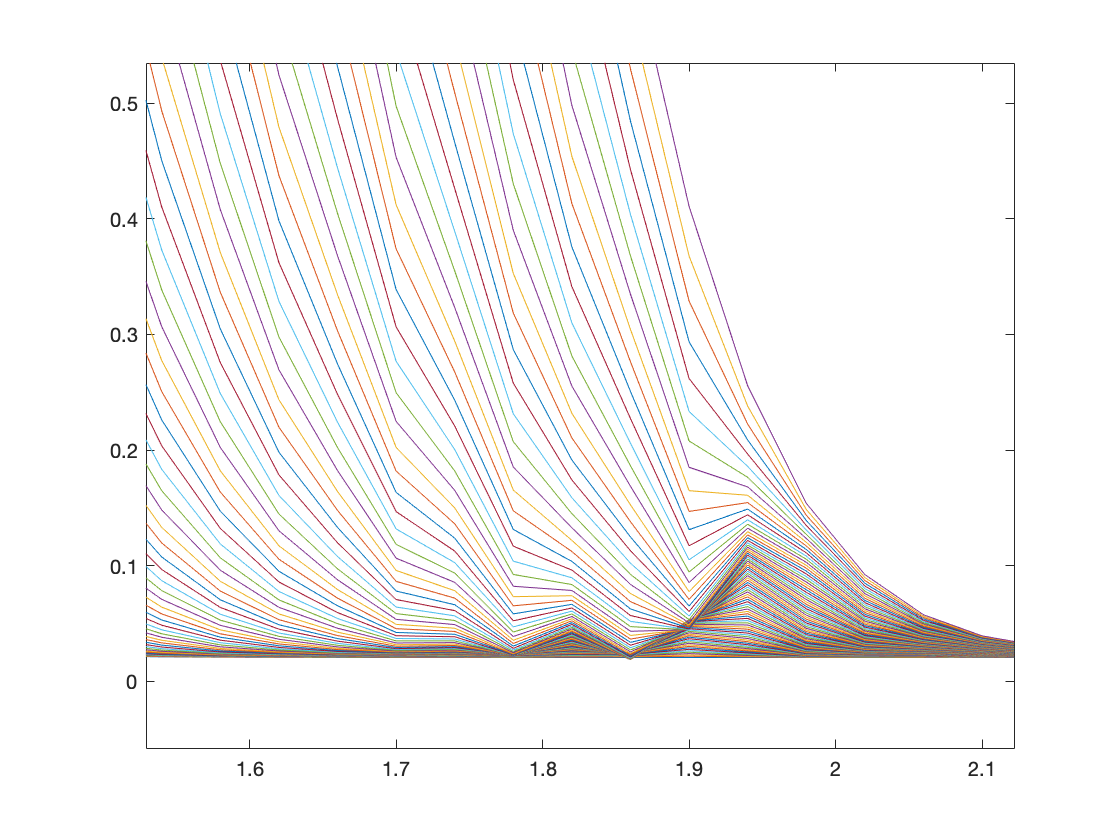}
	}
	\caption{Evolution of probability density in Example \ref{Exnew0} with $r=0.0001$
	(up) 
	oscillator behaviors of probability density when $r=0.01$(down)}
	\label{figureEx5}
\end{figure}

\end{example}

\subsection{2D numerical experiments}
Here, we give computational results for a computational domain $\mathcal O$ which represents a truncation
of $\mathbb R^2.$ 
In Examples \ref{Ex5}-\ref{Ex9}, we always
take $K=10$ multiple shooting subintervals, $\delta x=0.2$ as spatial step size, and $N=30$ 
integration steps on each subinterval $[t_i,t_{i+1}]$, $t_i=i/K, i=0,\cdots K-1$. 

In Examples \ref{Ex5}-\ref{Ex6},  the initial and/or terminal distributions, $\mu,\nu$, are 
normalizations of Gaussian type densities, namely
\begin{equation}\label{2d-example}\begin{split}
\widehat \mu & =\exp(-a_0(x_2-b_0)^2-c_0(x_1-d_0)^2)+r_0,\\
\widehat \nu & =\exp(-a_1(x_2-b_1)^2-c_1(x_1-d_1)^2)+r_1.
\end{split}\end{equation}

\begin{example}\label{Ex5}
Spatial domain is $\mathcal O=[-1,4]\times[-1,4]$.  
Initial density is the normalization of the Gaussian type density $\hat \mu$ in \eqref{2d-example},
with parameters $a_0=5,b_0=0.5,c_0=5,d_0=1.5,r_0=0.01.$
The terminal distribution is the normalization of $\widehat \nu$ below (a two-bump Gaussian)
\begin{align*}
\widehat \nu=\exp(-5(x_2-2.45)^2-5(x_1-2.45)^2)+\exp(-5(x_2-2.45)^2-5(x_1-0.55)^2)+0.01.
\end{align*}
In  Fig. \ref{ncontour-den}, we show the contour plots of the density at different times, from
which the formation of the two bumps is apparent. 
The surfaces of the density at $t=0.8$ and the two components of
initial velocity are shown in Fig. \ref{dent=0.8} and \ref{in-vel}, respectively.

\begin{figure}
\centering
\subfigure {
\begin{minipage}[b]{0.31\linewidth}
\includegraphics[width=1.15\linewidth]{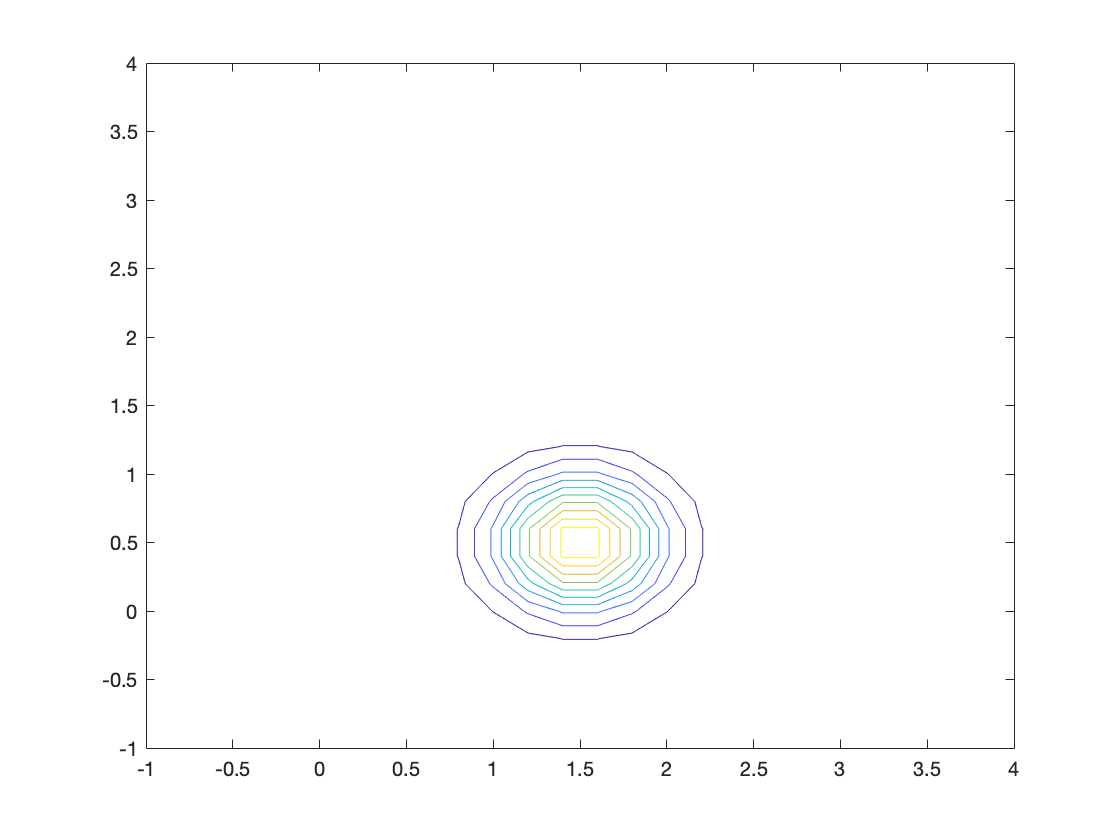}
\includegraphics[width=1.15\linewidth]{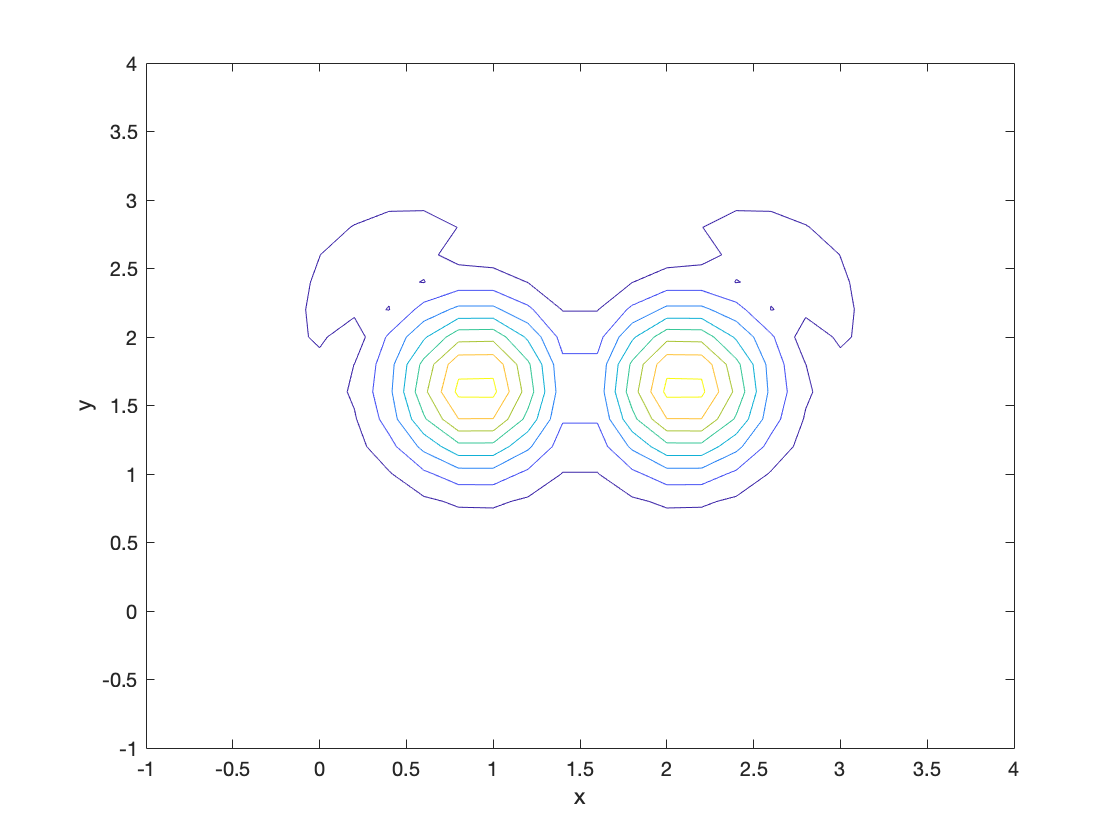}
\end{minipage}}
\subfigure{
\begin{minipage}[b]{0.31\linewidth}
\includegraphics[width=1.15\linewidth]{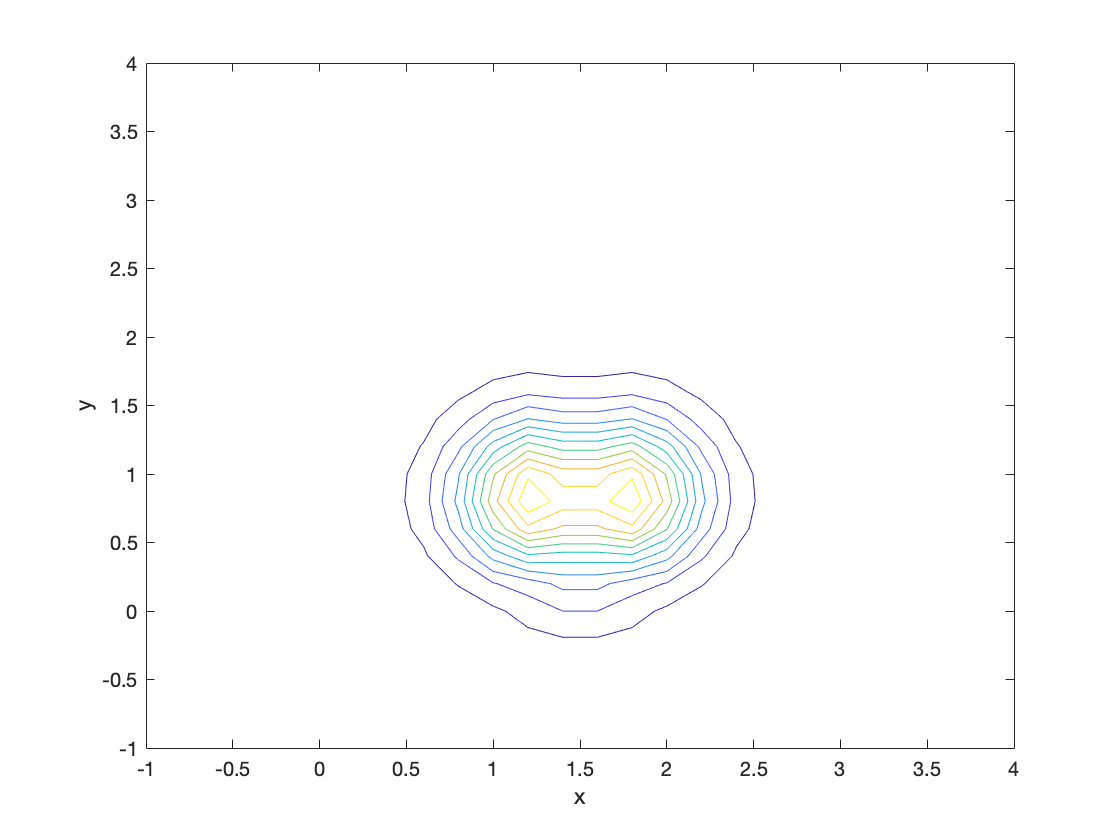}
\includegraphics[width=1.15\linewidth]{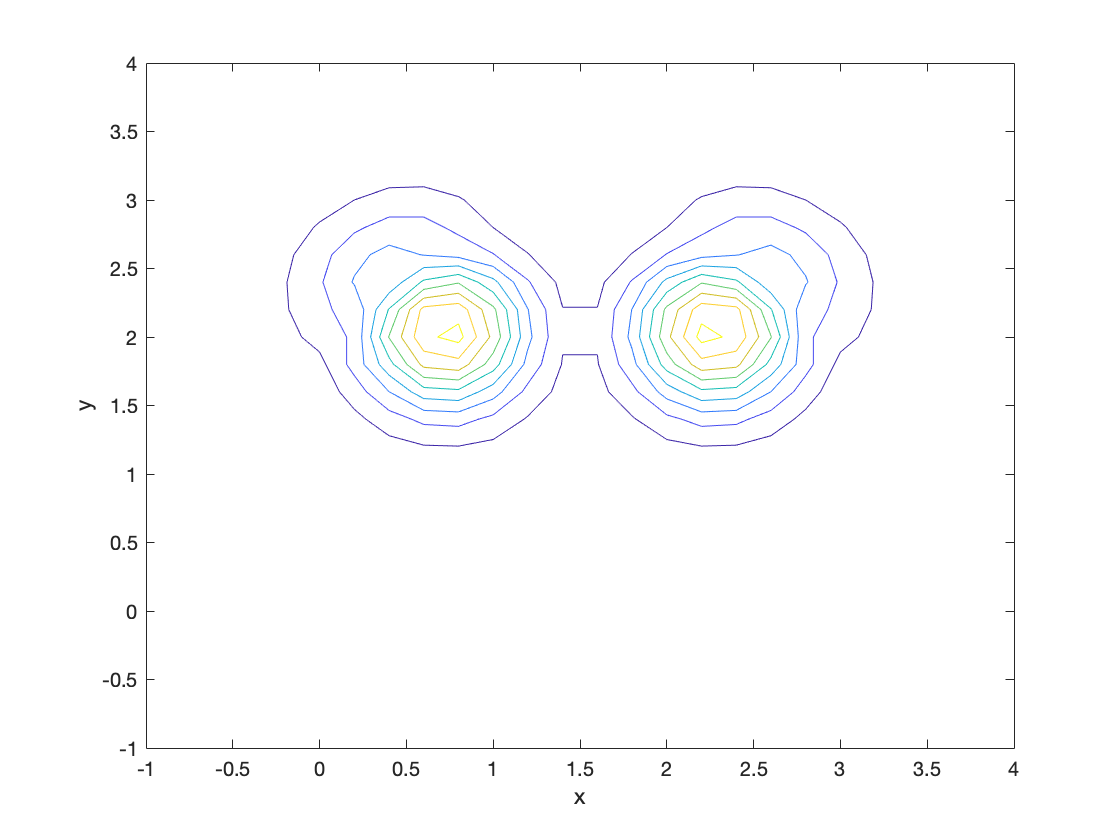}
\end{minipage}}
\subfigure{
\begin{minipage}[b]{0.31\linewidth}
\includegraphics[width=1.15\linewidth]{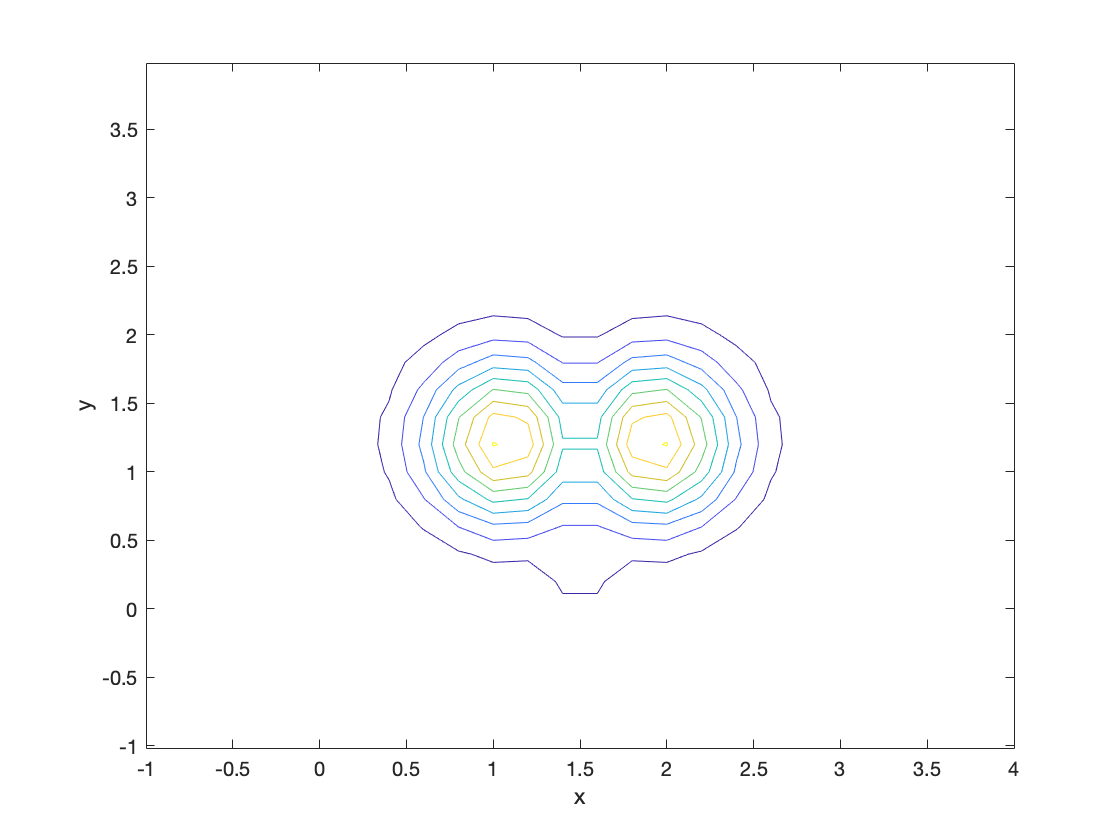}
\includegraphics[width=1.15\linewidth]{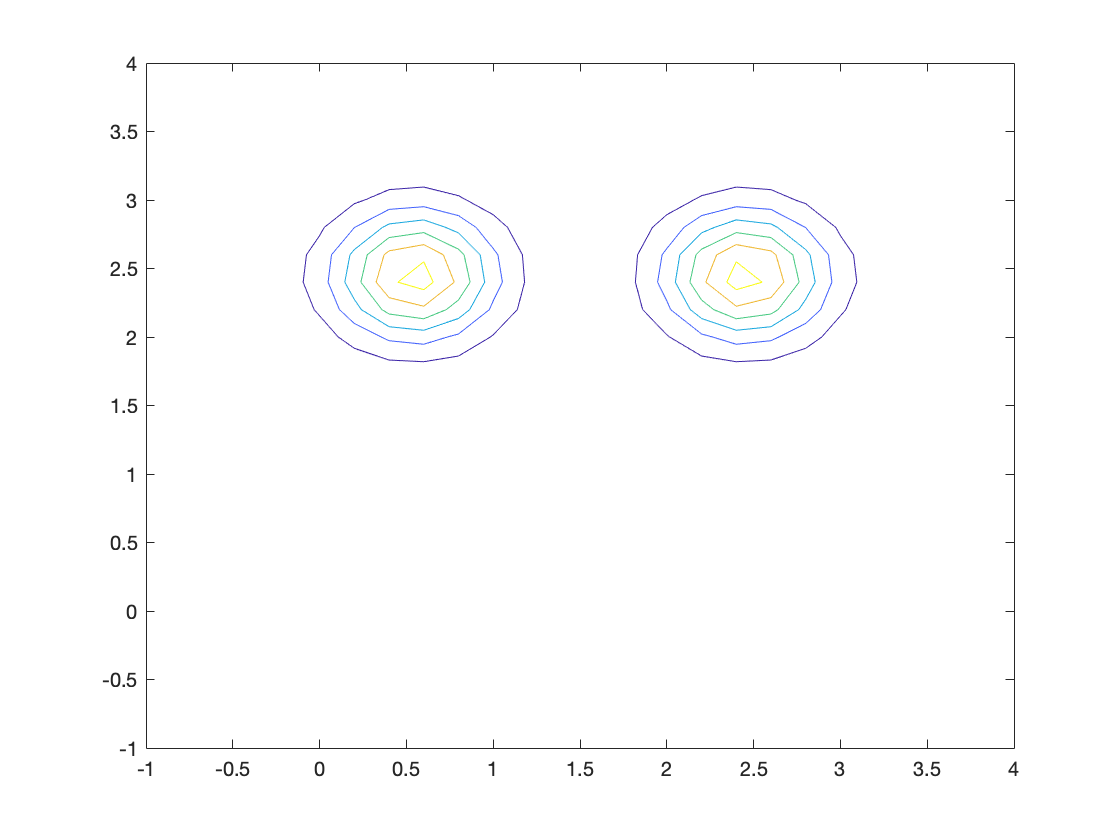}
\end{minipage}}

\centering 
\caption{Example \ref{Ex5}: contour plots of $\rho$ at $t=0, 0.2, 0.4,0.6,0.8,1$.}
\label{ncontour-den}
\end{figure}

\begin{figure}
\centering
\subfigure {
\begin{minipage}[b]{0.75\linewidth}
\includegraphics[width=1.15\linewidth]{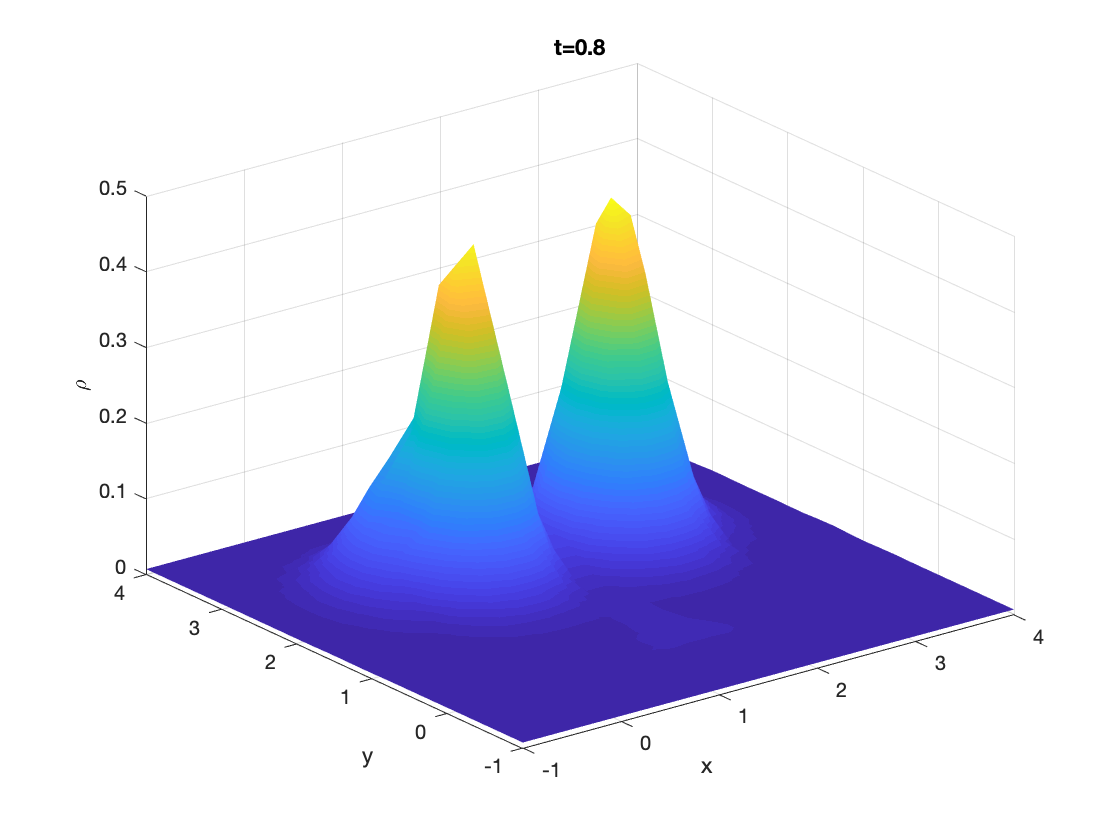}
\end{minipage}}
\centering 
\caption{Example \ref{Ex5}: the surface $\rho$ at $t=0.8$.}
\label{dent=0.8}
\end{figure}

\begin{figure}
\centering
\subfigure {
\begin{minipage}[b]{0.7\linewidth}
\includegraphics[width=1.25\linewidth]{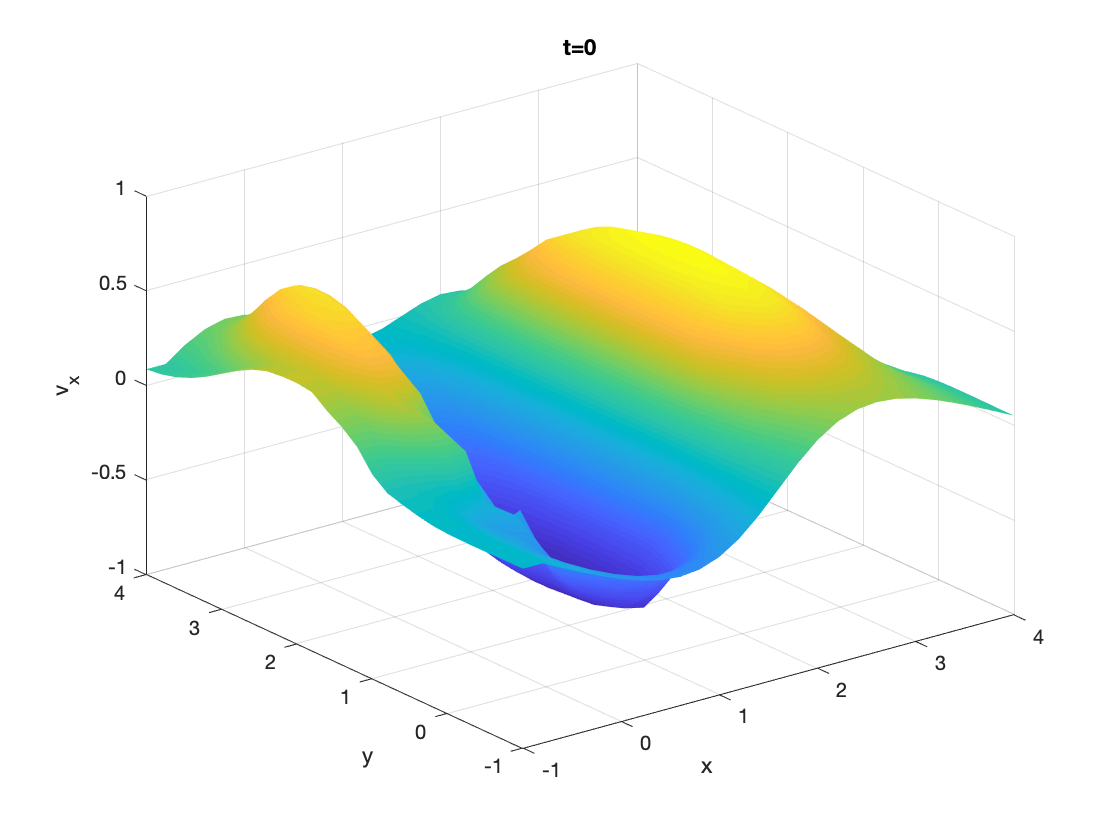}
\includegraphics[width=1.25\linewidth]{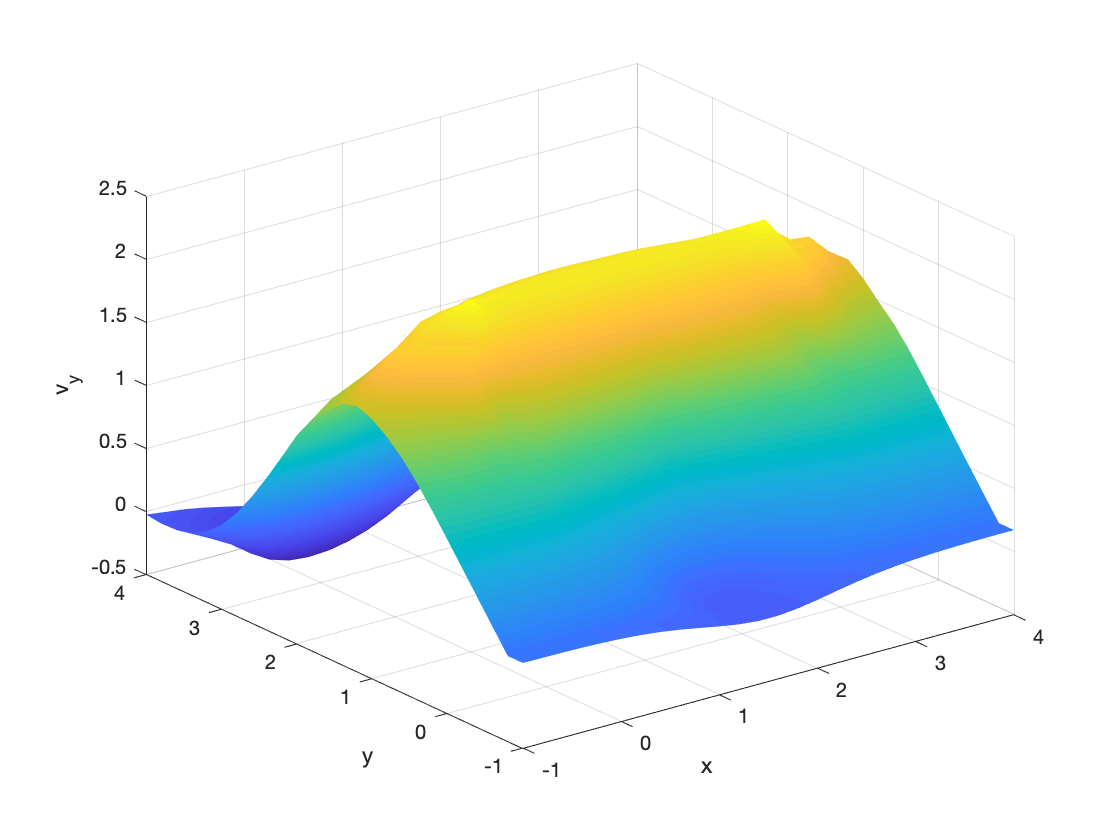}
\end{minipage}}

\centering 
\caption{Example \ref{Ex5}: the two components of the initial velocity.}
\label{in-vel}
\end{figure}

\end{example}

\begin{example}\label{Ex6}
Spatial domain is $\mathcal O=[-1,3]\times[-1,3]$. 
Initial and terminal densities from \eqref{2d-example} with parameters 
$a_0=2.5,a_1=5,b_0=0.5,b_1=1.5,c_0=5,c_1=10,d_0=0.3,d_1=1.3,r_0=r_1=0.001.$   
Contour plots of the density evolution are in 
Fig. \ref{contour-den1}.

\begin{figure}
\centering
\subfigure {
\begin{minipage}[b]{0.31\linewidth}
\includegraphics[width=1.15\linewidth]{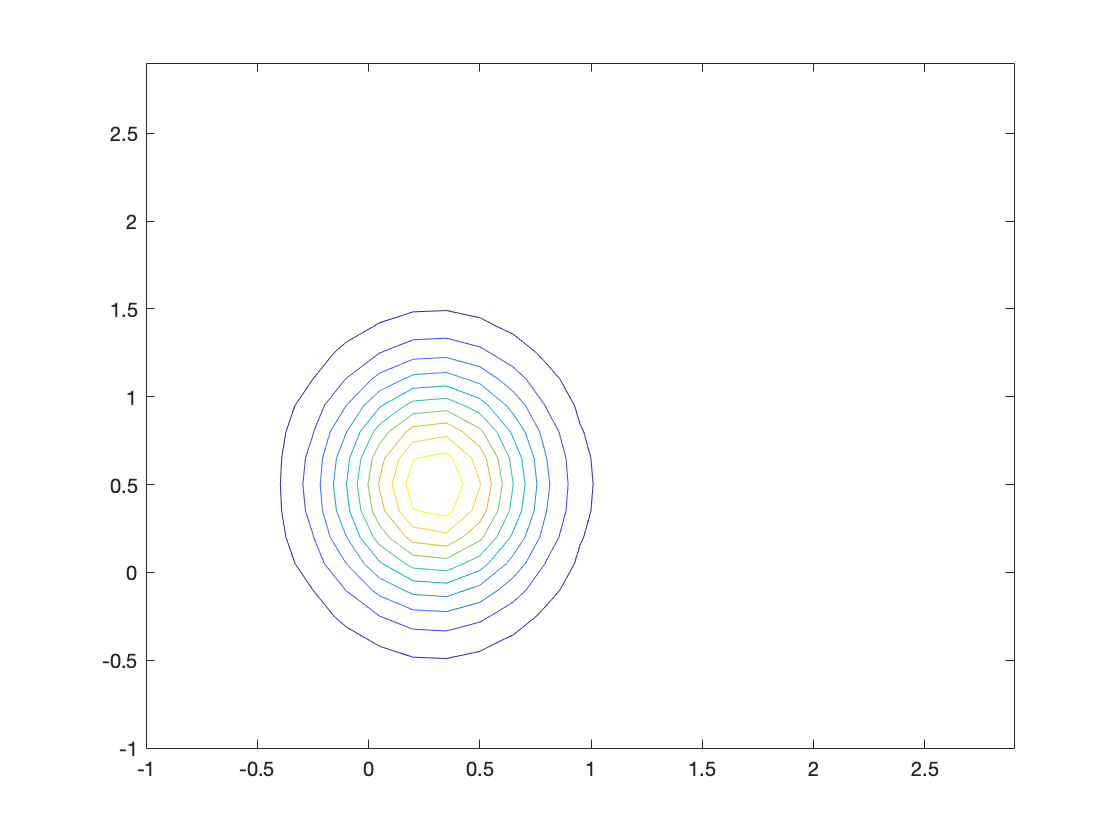}
\includegraphics[width=1.15\linewidth]{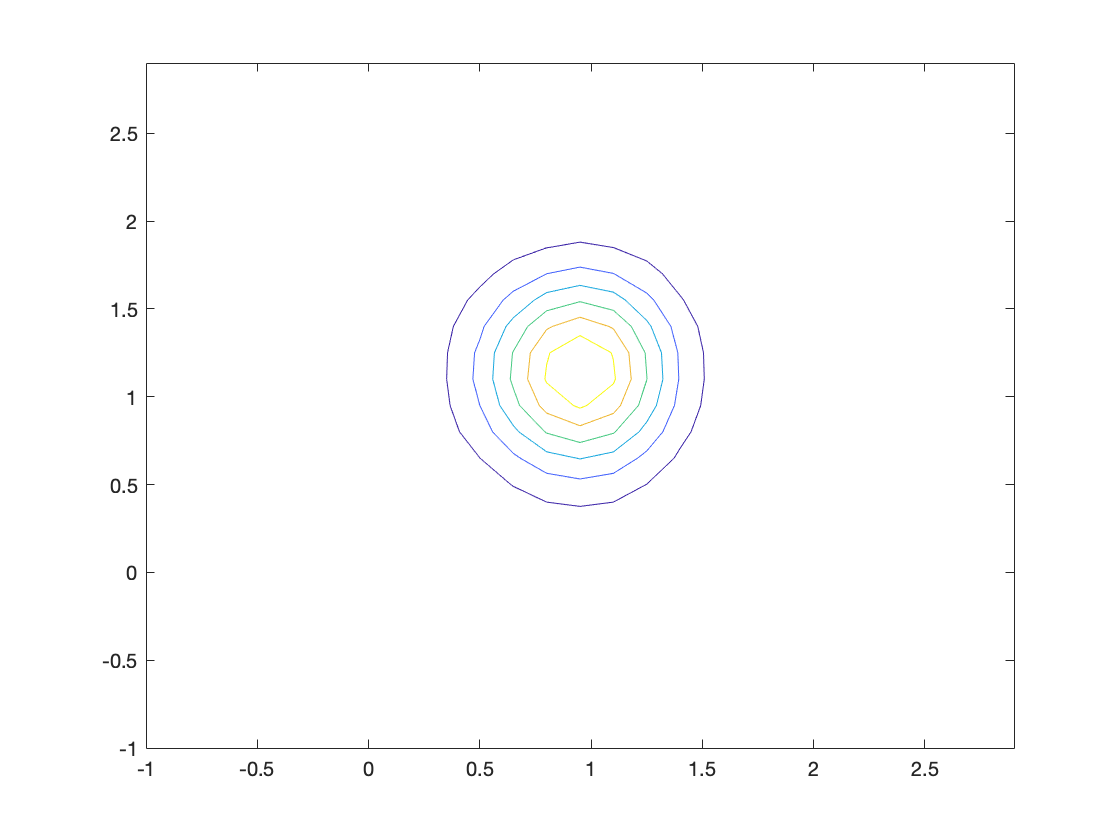}
\end{minipage}}
\subfigure{
\begin{minipage}[b]{0.31\linewidth}
\includegraphics[width=1.15\linewidth]{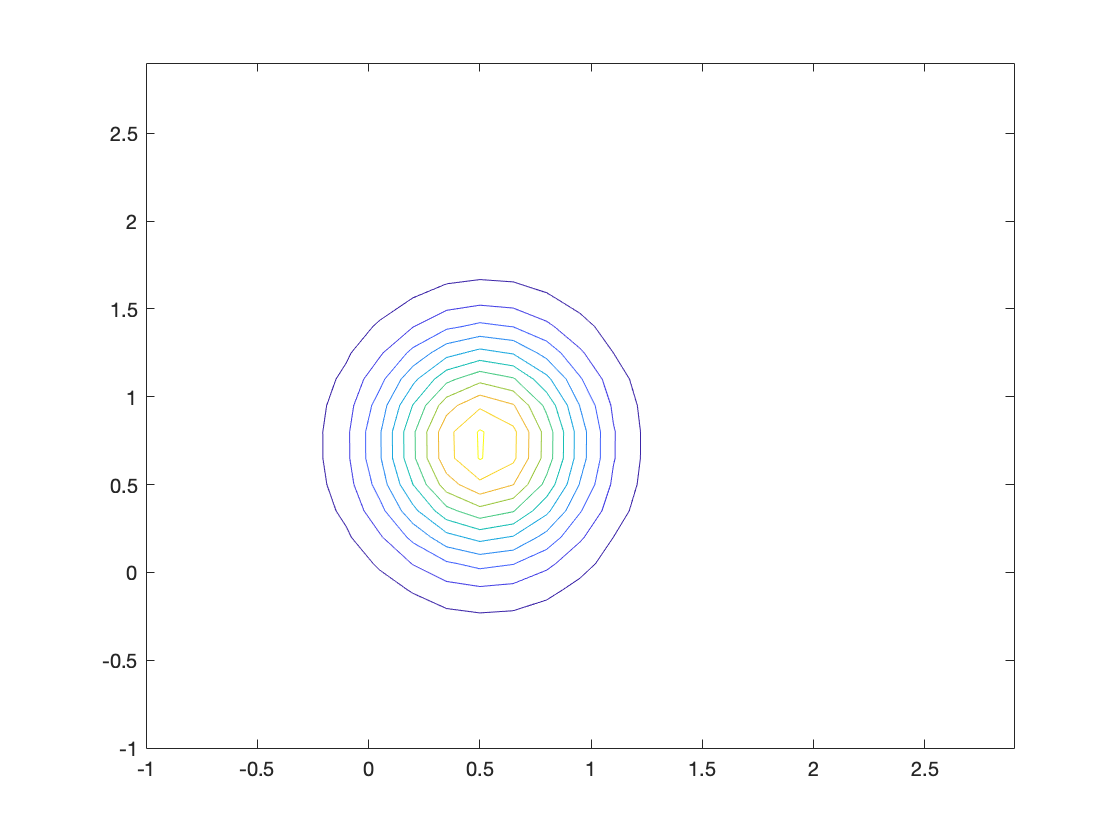}
\includegraphics[width=1.15\linewidth]{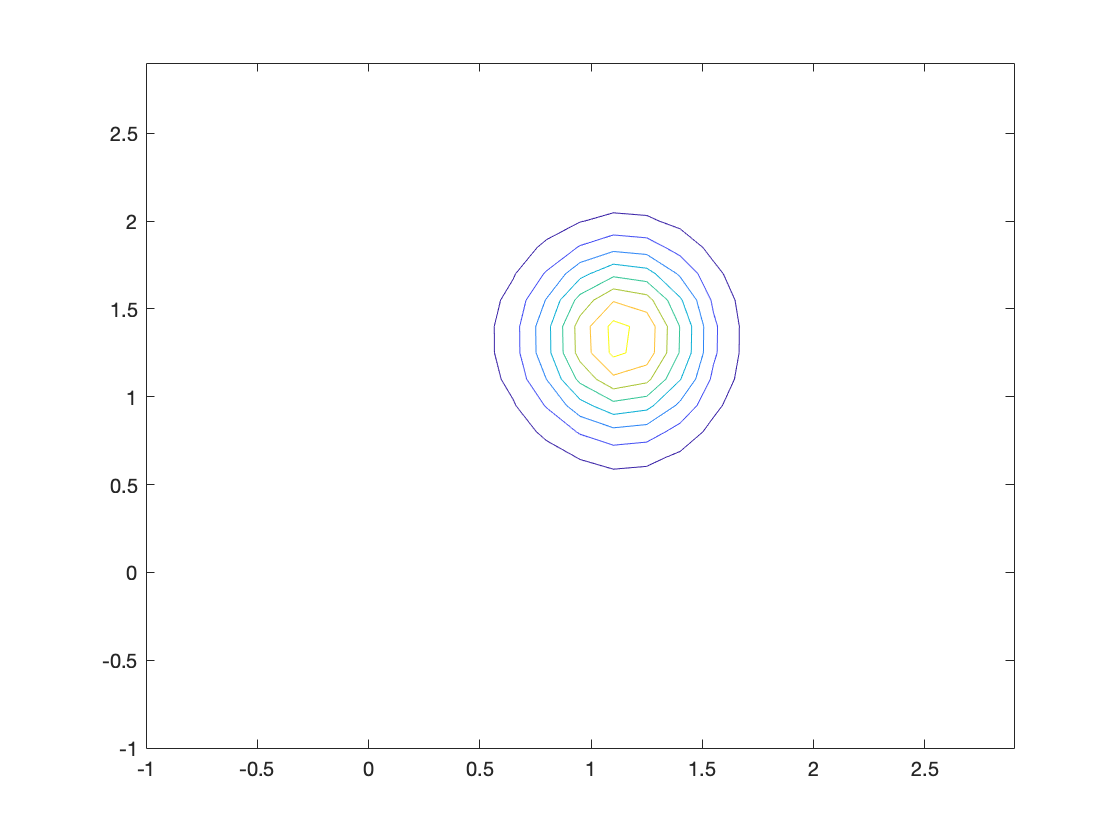}
\end{minipage}}
\subfigure{
\begin{minipage}[b]{0.31\linewidth}
\includegraphics[width=1.15\linewidth]{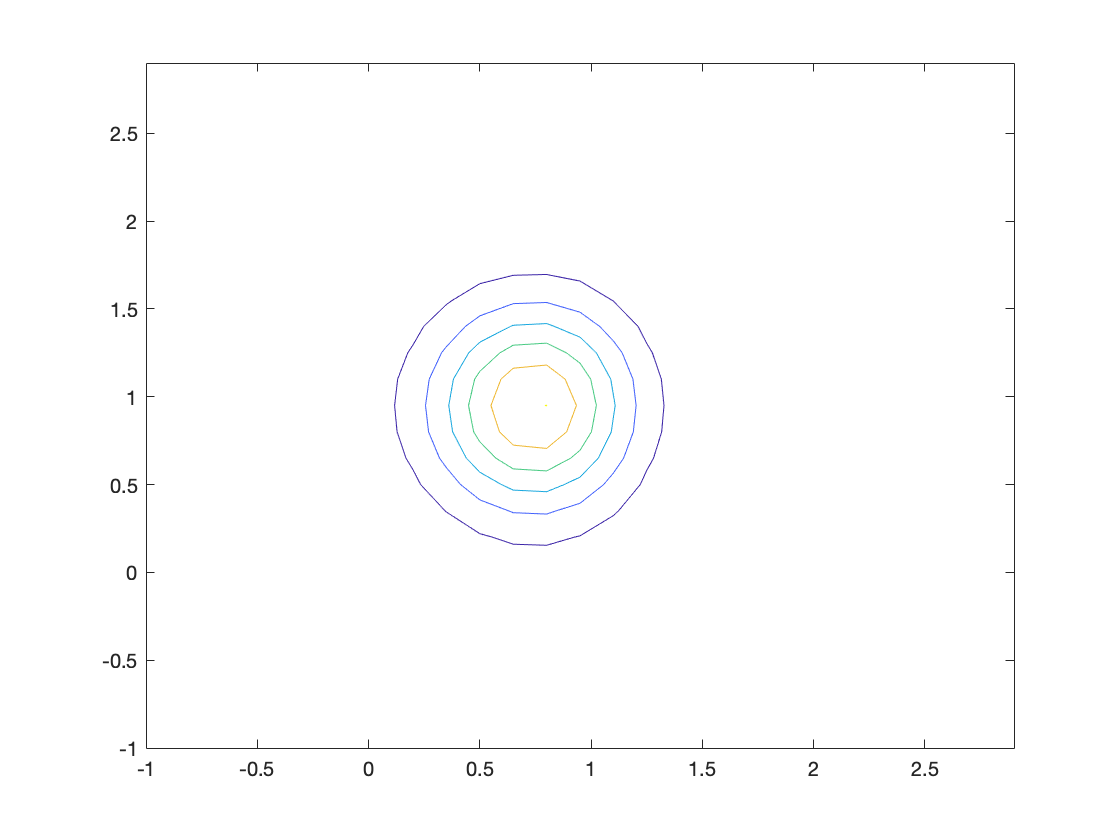}
\includegraphics[width=1.15\linewidth]{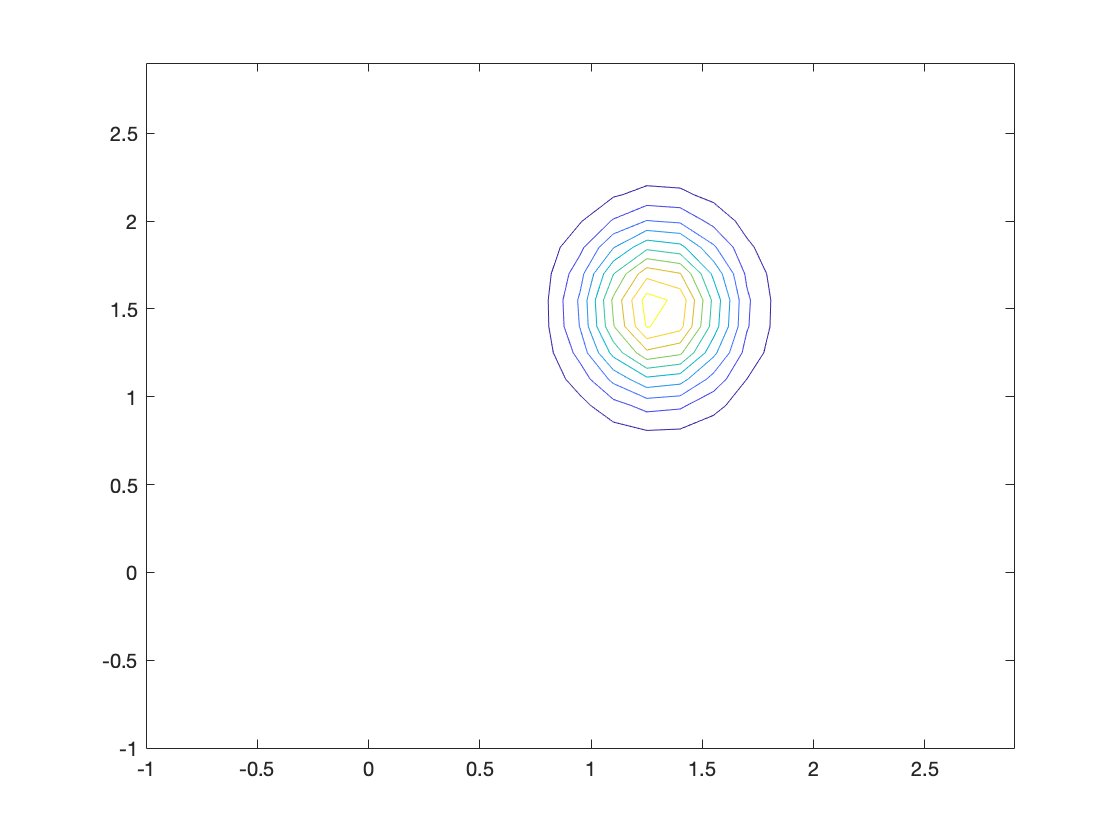}
\end{minipage}}

\centering 
\caption{Example \ref{Ex6}: contour plots of $\rho$ at $t=0, 0.2, 0.4,0.6,0.8,1$.}
\label{contour-den1}
\end{figure}

\end{example}

\iffalse
Example 7: 
Spatial domain $\mathcal O=[x_L,x_R]\times[x_L,x_R],$ $x_L=-1,x_R=3.$ 
The initial density and terminal distributions are normalized Gaussian densities with parameters
$a_0=a_1=50,b_0=0.5,b_1=1.5,c_0=c_1=50,d_0=0.3,d_1=1.3, r_1=r_2=0.001.$
The contour plot of the density evolution is presented in 
Fig. \ref{rhot-den}.
%\zhou{Do we have any reason why examples 6 and 7 are different and necessary to be included at the same time?}
%{\color{red} We may remove one of them. The only difference is the value of $a_0$ and $a_1$.}

\begin{figure}
\centering
\subfigure {
\begin{minipage}[b]{0.31\linewidth}
\includegraphics[width=1.15\linewidth]{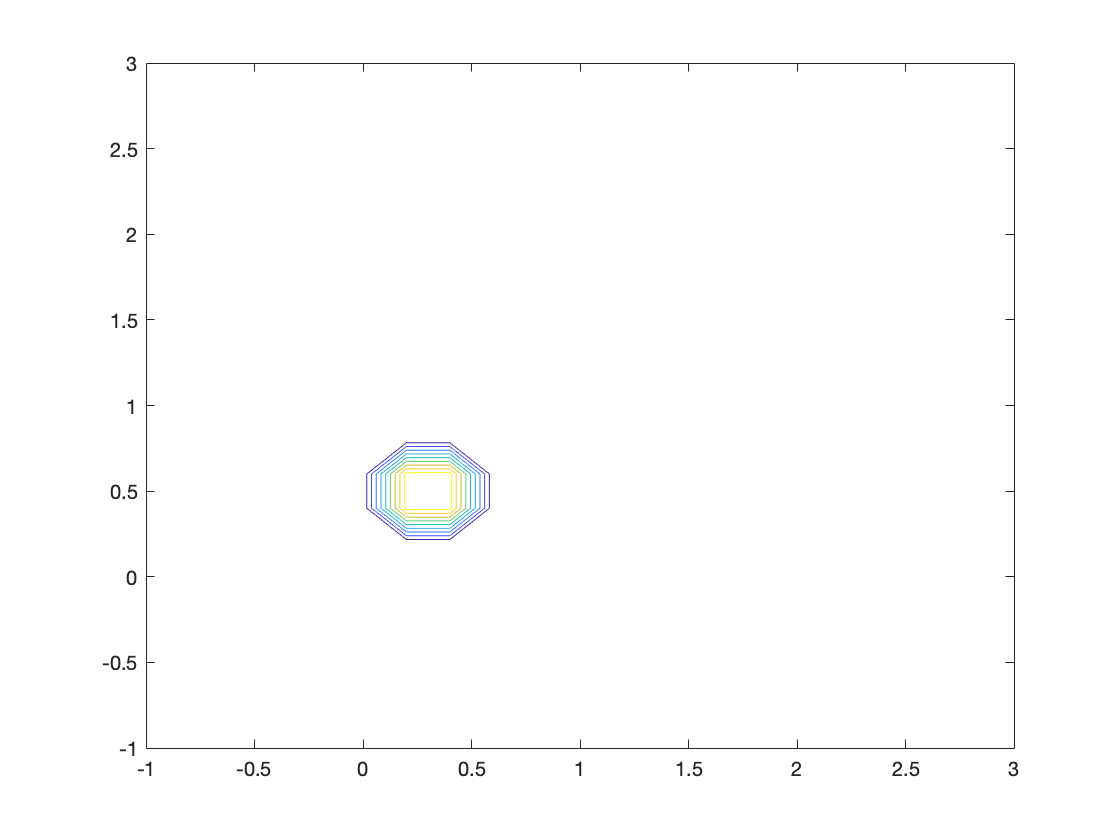}
\includegraphics[width=1.15\linewidth]{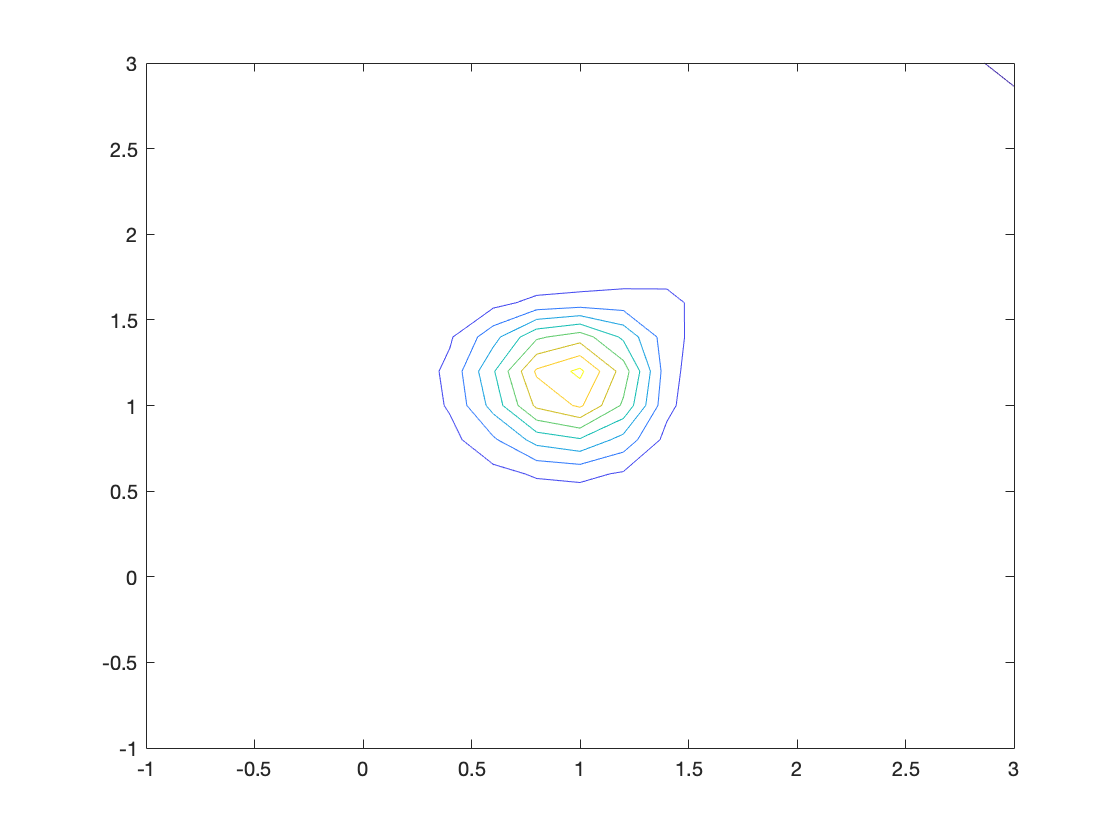}
\end{minipage}}
\subfigure{
\begin{minipage}[b]{0.31\linewidth}
\includegraphics[width=1.15\linewidth]{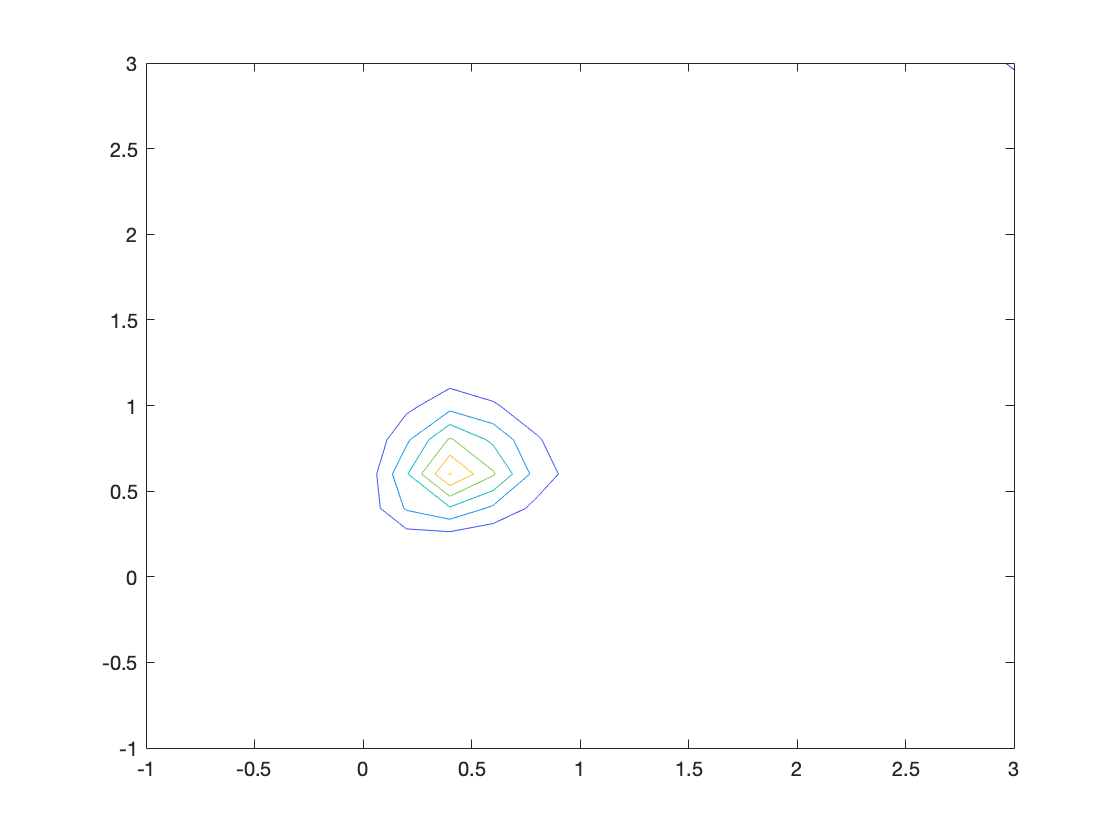}
\includegraphics[width=1.15\linewidth]{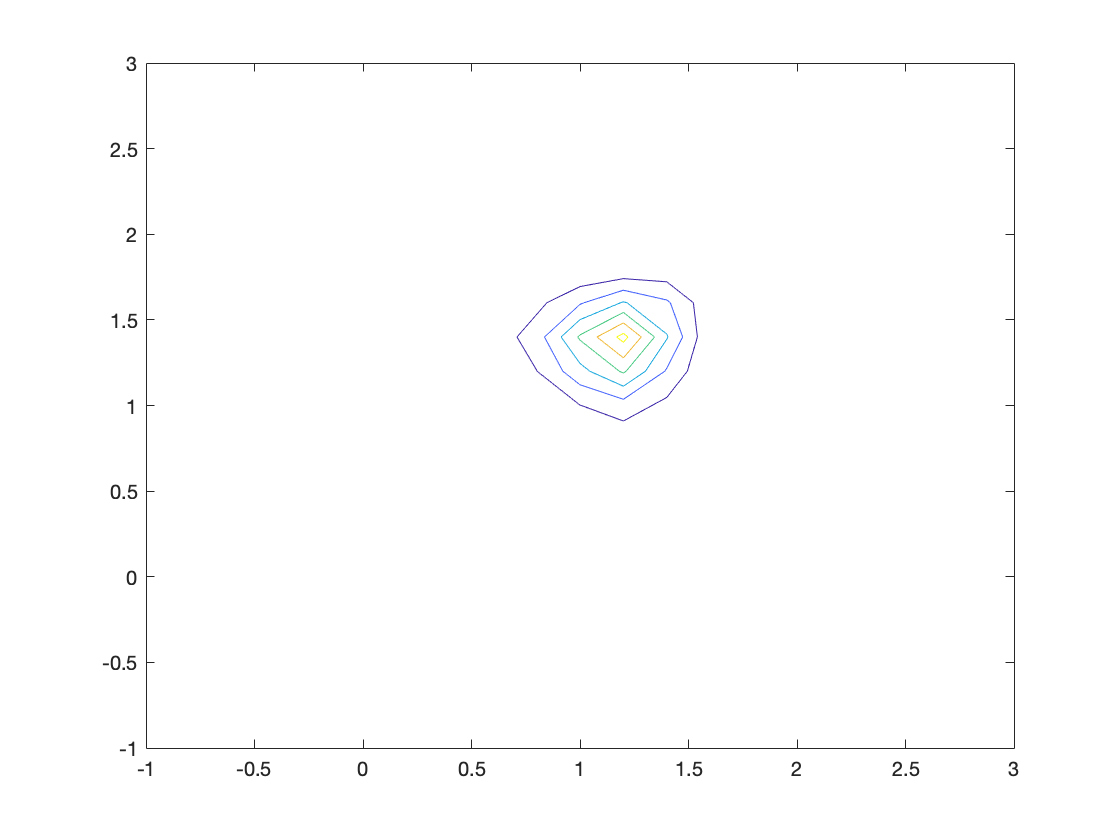}
\end{minipage}}
\subfigure{
\begin{minipage}[b]{0.31\linewidth}
\includegraphics[width=1.15\linewidth]{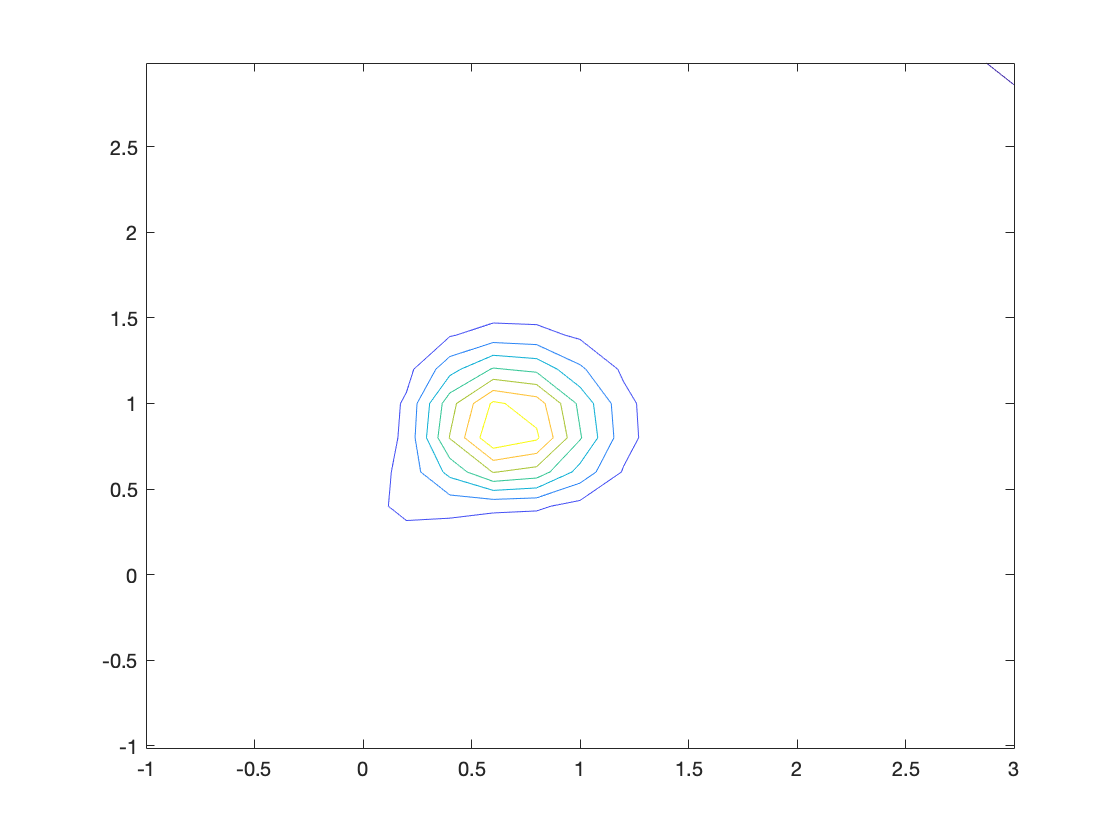}
\includegraphics[width=1.15\linewidth]{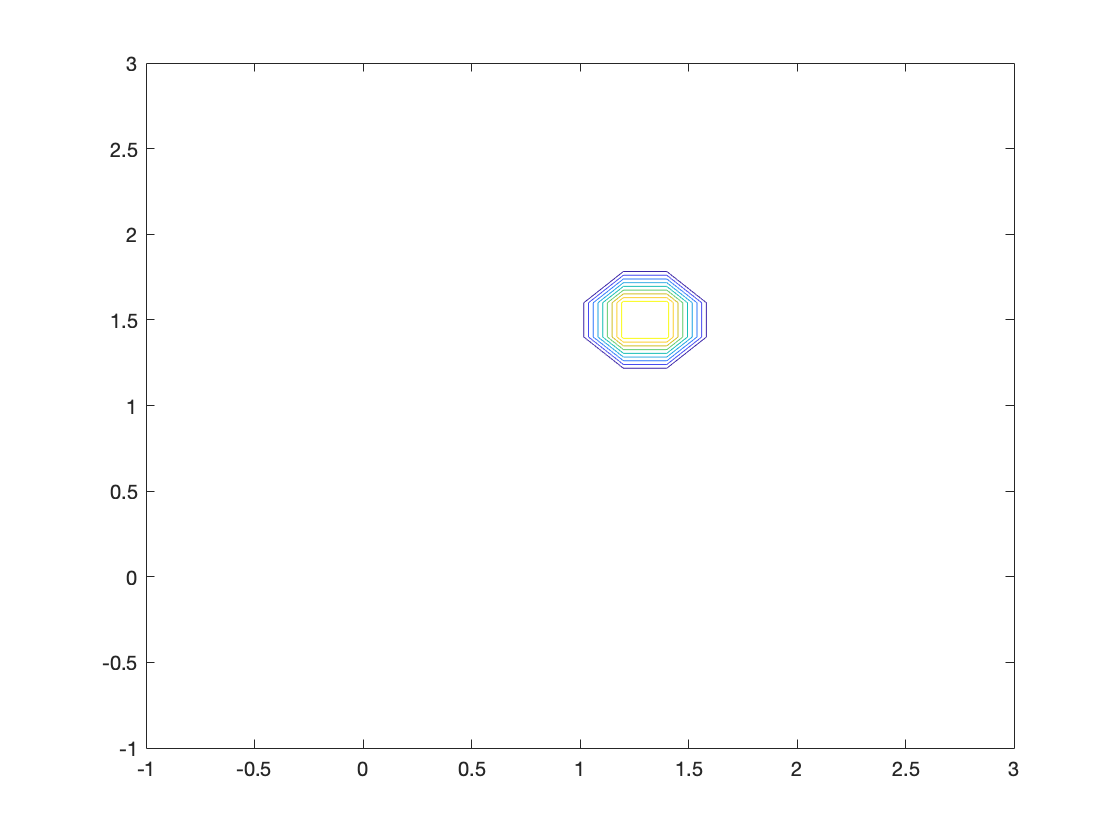}
\end{minipage}}

\centering 
\caption{the probability functions with $\mu, \nu$ at $t=0, 0.2, 0.4,0.6,0.8,1,$ in Example 7}
\label{rhot-den}
\end{figure}
\fi 

For the next set of examples, we choose the initial or terminal distributions as the normalization of the Laplace distribution. 
We use $a_0,b_0,c_0,r_0$ or  $a_1,b_1,c_1,r_1$ to indicate the parameters of the Laplace type distribution given as:
\begin{equation}\label{2d-Laplace} \begin{split}
\widehat \mu & =\exp(-a_0|x_2-b_0|-c_0|x_1-d_0|)+r_0,\\
\widehat \nu & =\exp(-a_1|x_2-b_1|-c_1|x_1-d_1|)+r_1.
\end{split}\end{equation}

\begin{example}\label{Ex7}
Spatial domain $\mathcal O=[-1,3]\times[-1,3]$.  Initial and terminal densities are normalizations of the Laplace distributions
in \eqref{2d-Laplace} 
with parameters $a_0=a_1=5,b_0=0.5,b_1=1.5,c_0=c_1=5,d_0=0.6,d_1=1.6,r_0=r_1=0.001.$
Contour plots of the density evolution are in Fig. \ref{ccontour-den1}.

\begin{figure}
\centering
\subfigure {
\begin{minipage}[b]{0.31\linewidth}
\includegraphics[width=1.15\linewidth]{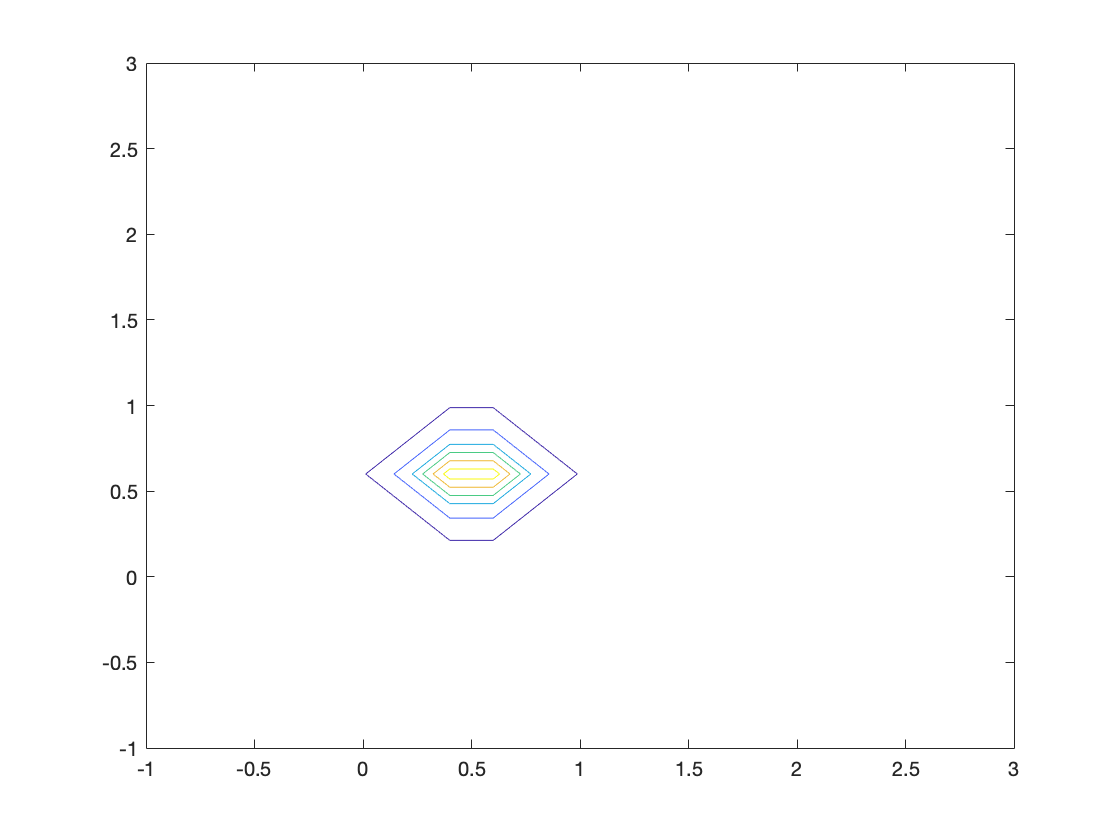}
\includegraphics[width=1.15\linewidth]{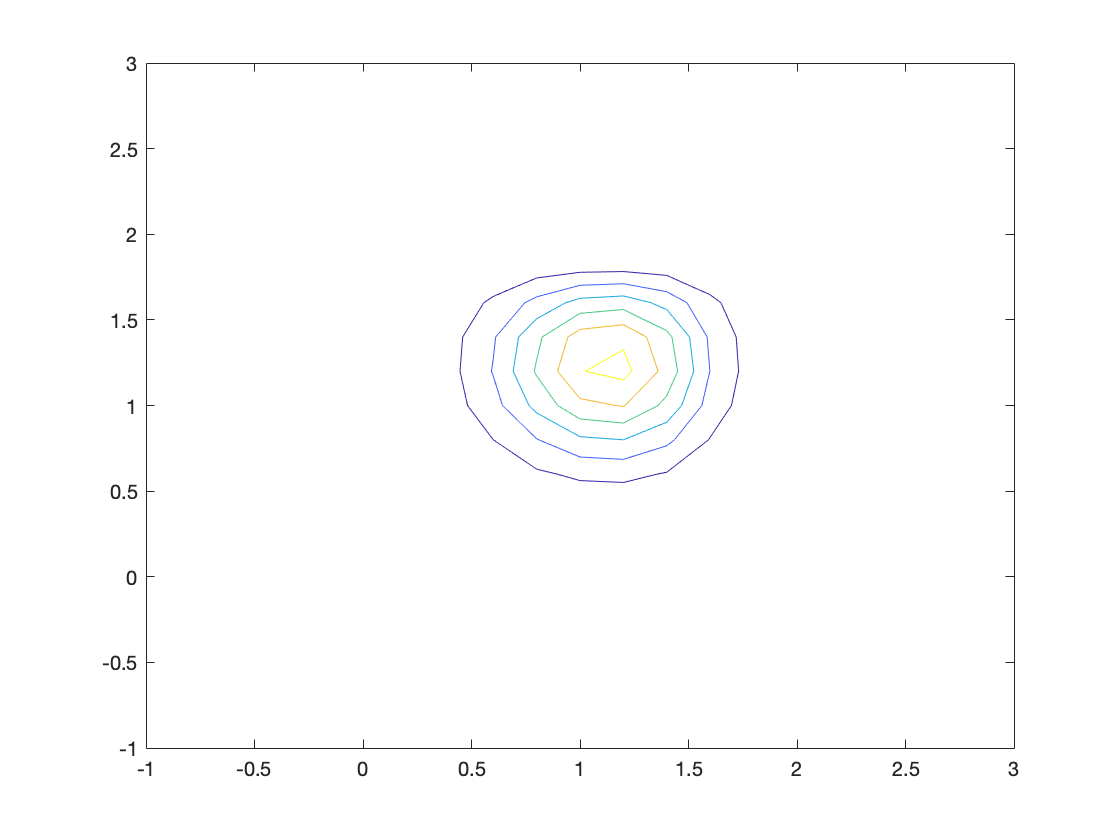}
\end{minipage}}
\subfigure{
\begin{minipage}[b]{0.31\linewidth}
\includegraphics[width=1.15\linewidth]{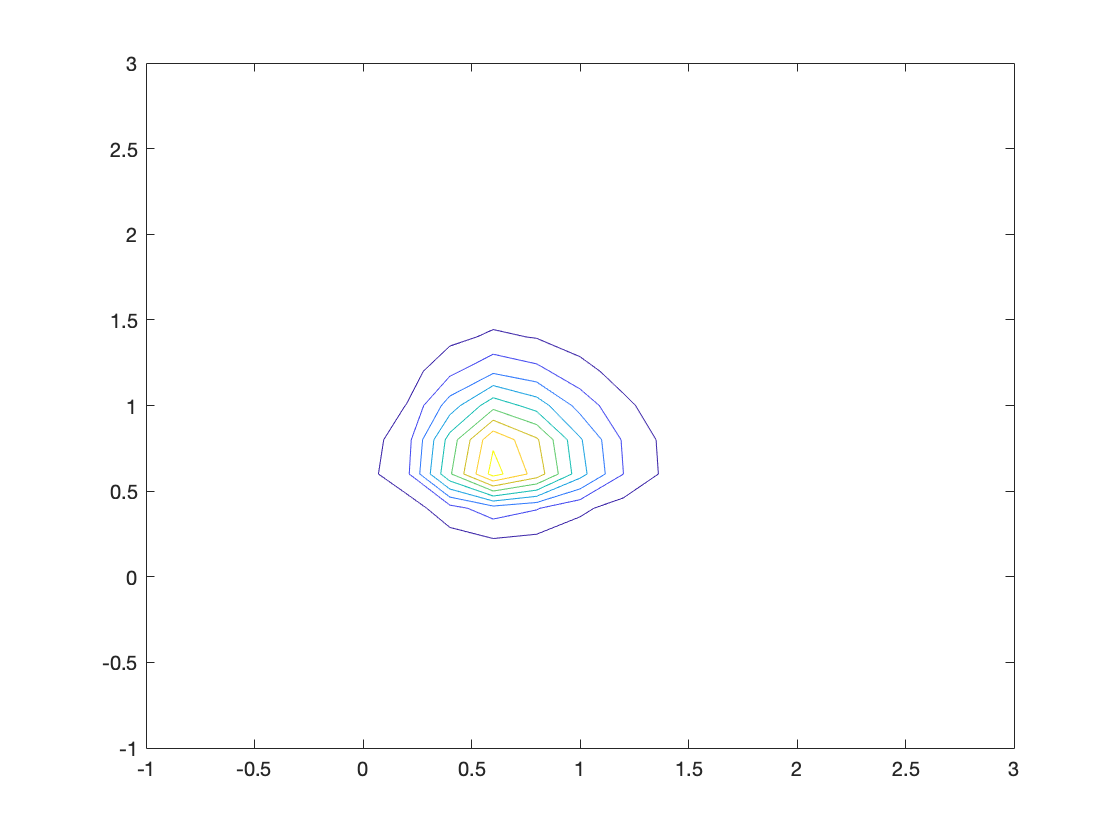}
\includegraphics[width=1.15\linewidth]{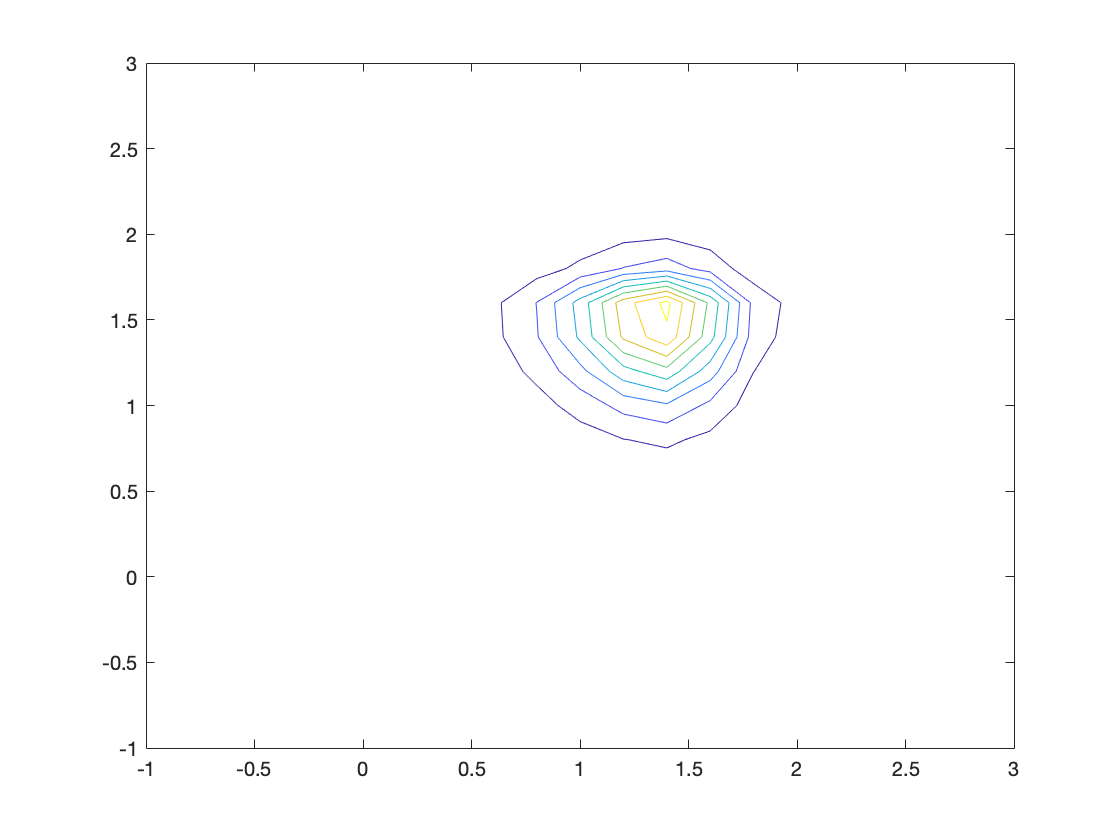}
\end{minipage}}
\subfigure{
\begin{minipage}[b]{0.31\linewidth}
\includegraphics[width=1.15\linewidth]{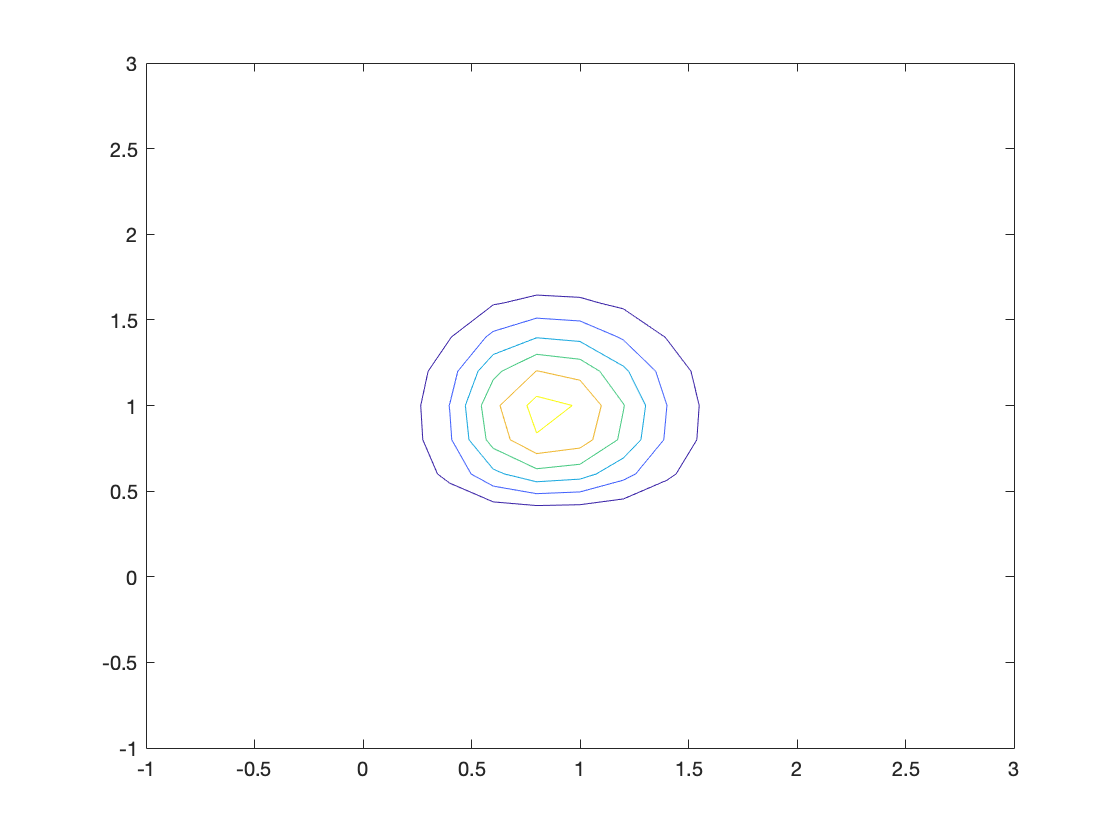}
\includegraphics[width=1.15\linewidth]{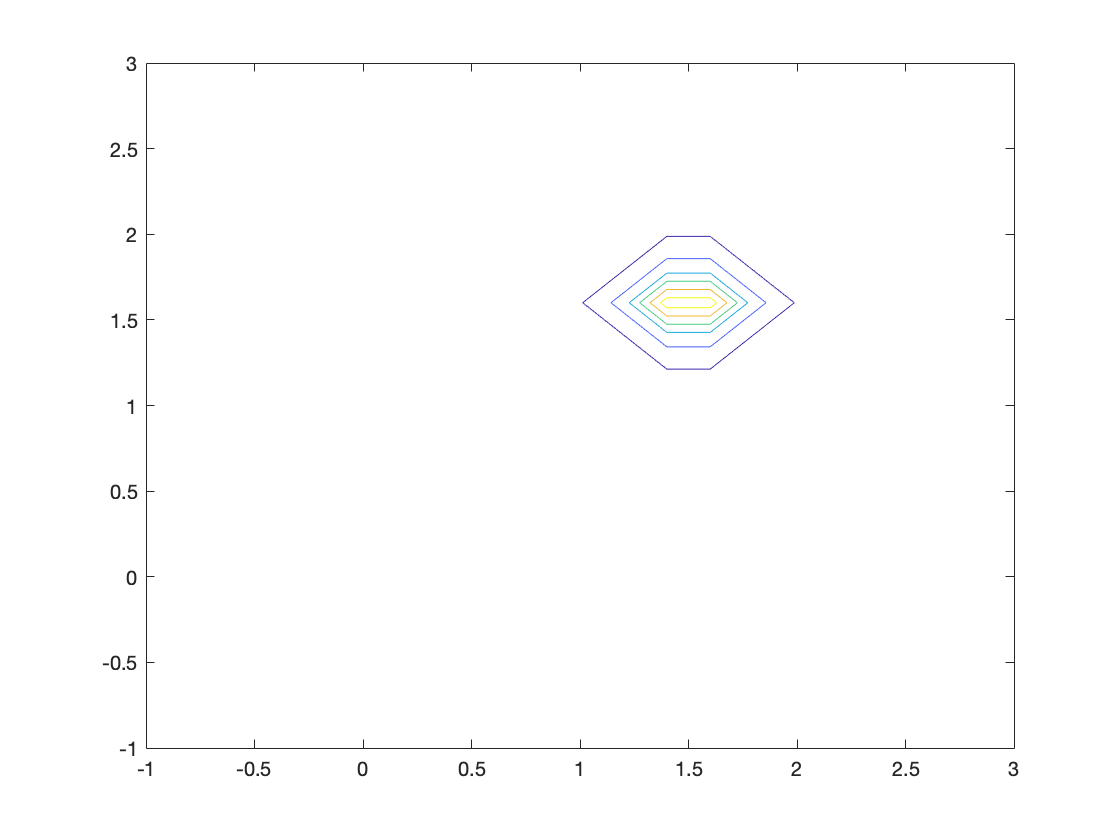}
\end{minipage}}

\centering 
\caption{Example \ref{Ex7}: contour plots of  $\rho$ at times $t=0, 0.2, 0.4,0.6,0.8,1$.}
\label{ccontour-den1}
\end{figure}

\end{example}

\begin{example}\label{Ex8}
Spatial domain $\mathcal O=[-1,3]\times[-1,3].$  Initial density is the uniform distribution.
Terminal density is the normalization of the Laplace distribution $\hat \nu$
with parameters $a_1=10,b_1=1.5,c_1=10,d=1.6,r=0.01.$ The contour plots of the density evolution are presented in 
Fig. \ref{dcontour-den1}.

\begin{figure}
\centering
\subfigure {
\begin{minipage}[b]{0.31\linewidth}
\includegraphics[width=1.15\linewidth]{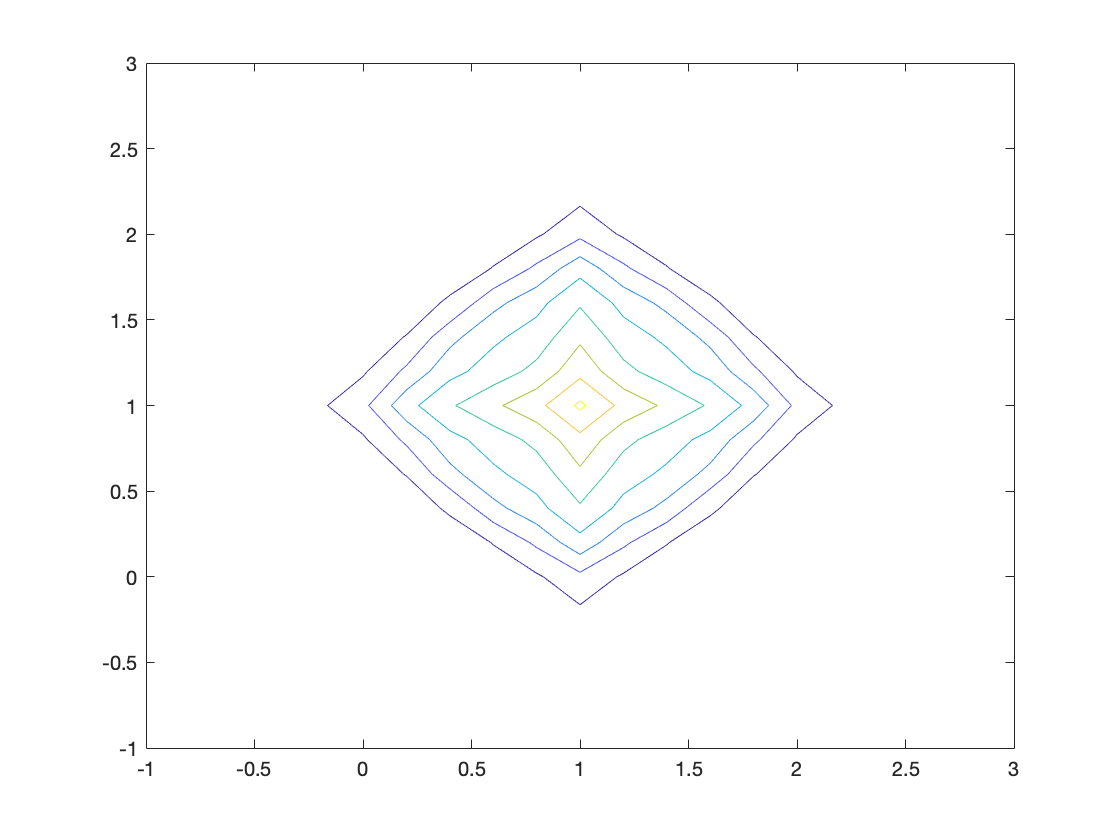}
\includegraphics[width=1.15\linewidth]{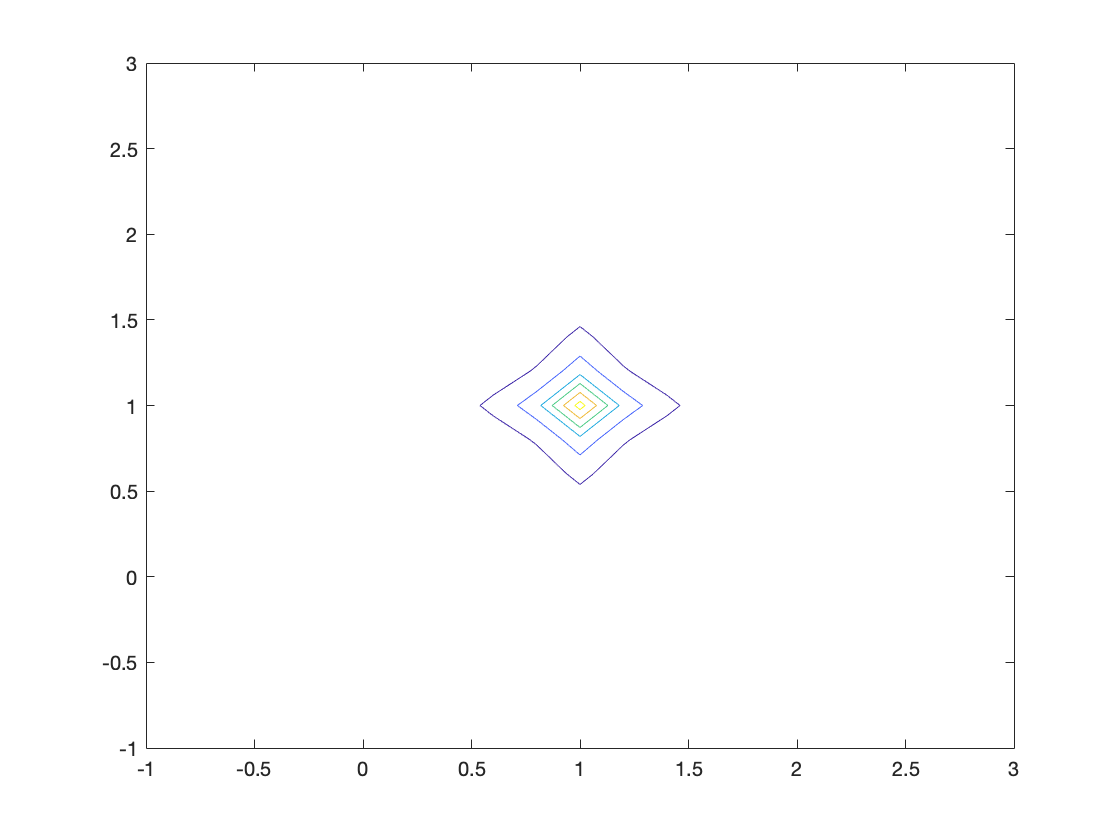}
\end{minipage}}
\subfigure{
\begin{minipage}[b]{0.31\linewidth}
\includegraphics[width=1.15\linewidth]{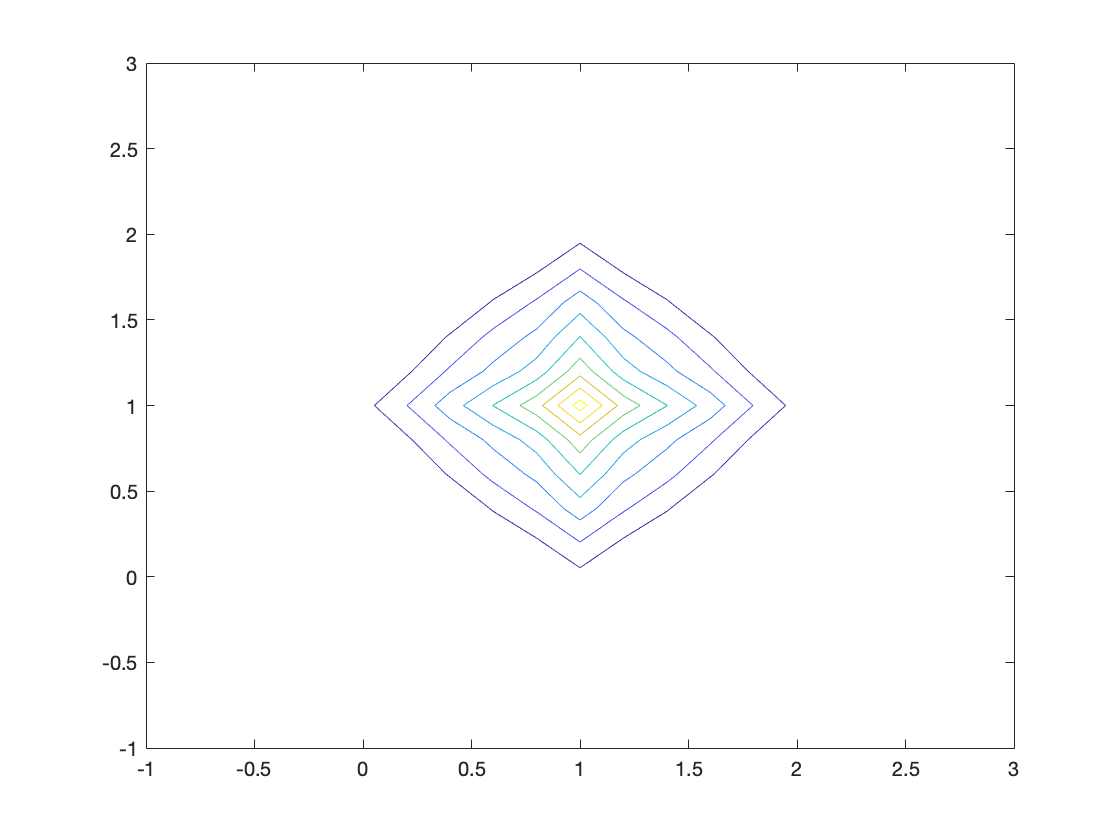}
\includegraphics[width=1.15\linewidth]{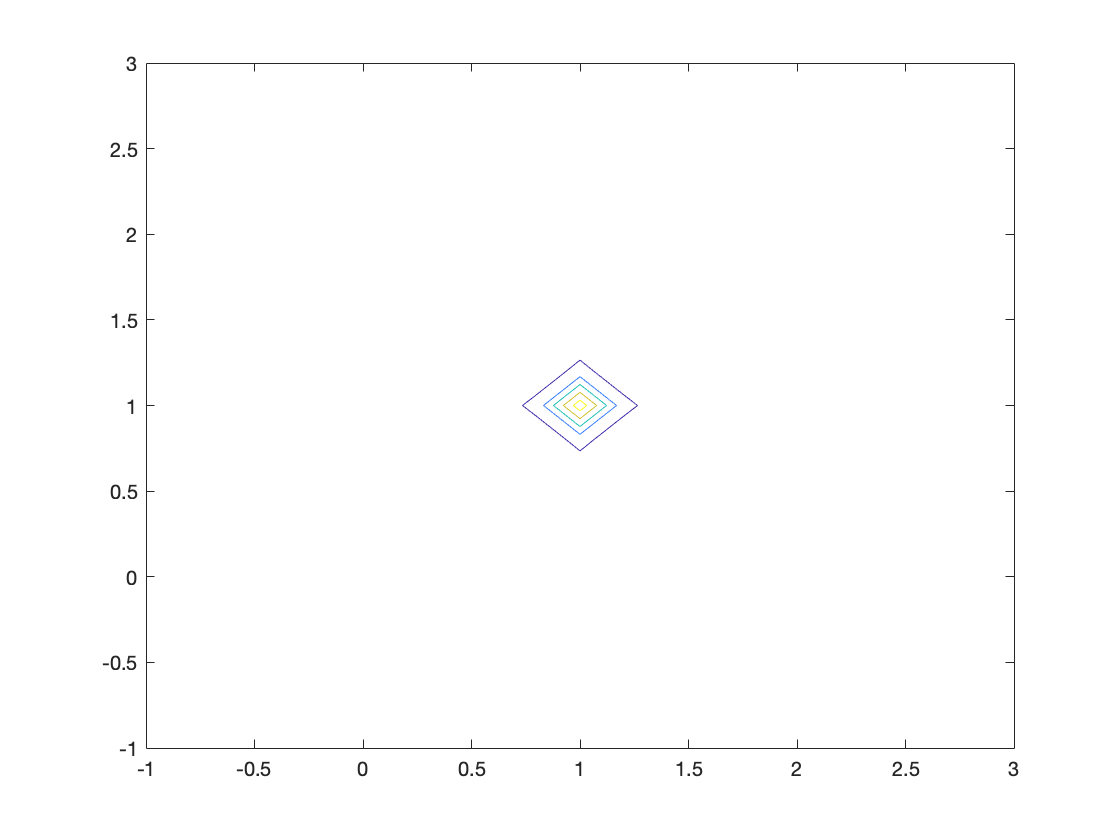}
\end{minipage}}
\subfigure{
\begin{minipage}[b]{0.31\linewidth}
\includegraphics[width=1.15\linewidth]{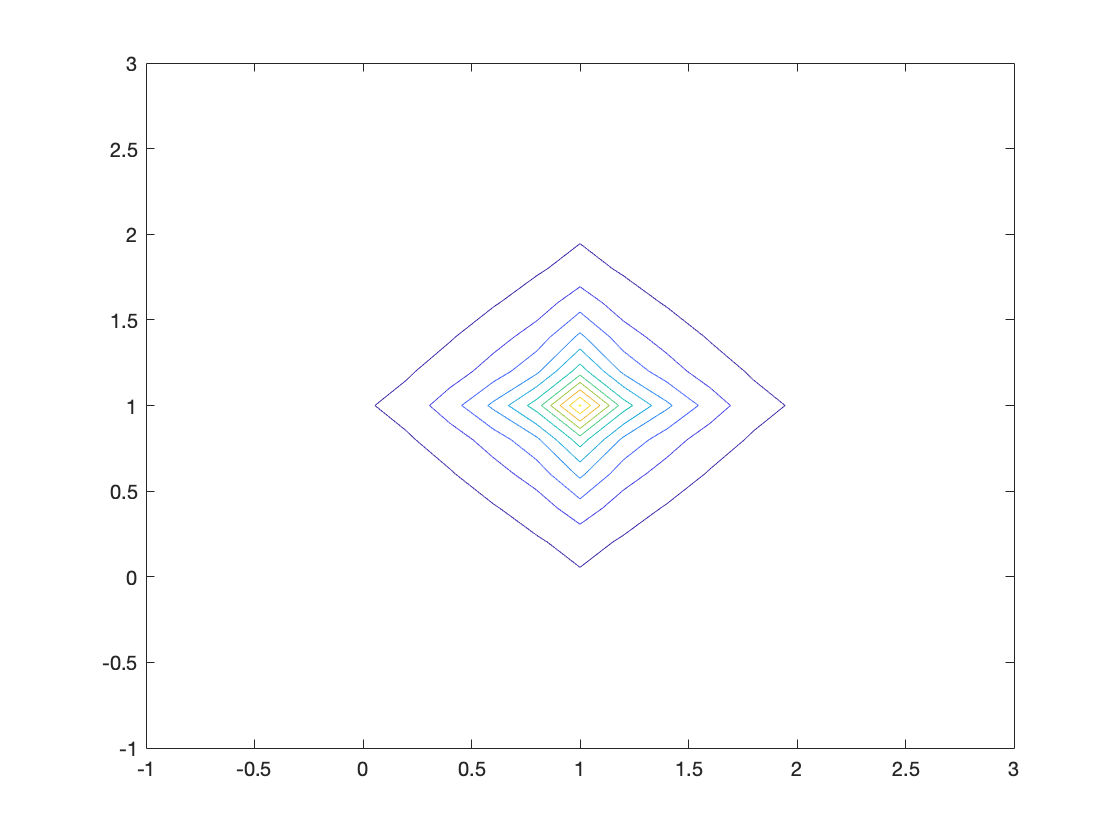}
\includegraphics[width=1.15\linewidth]{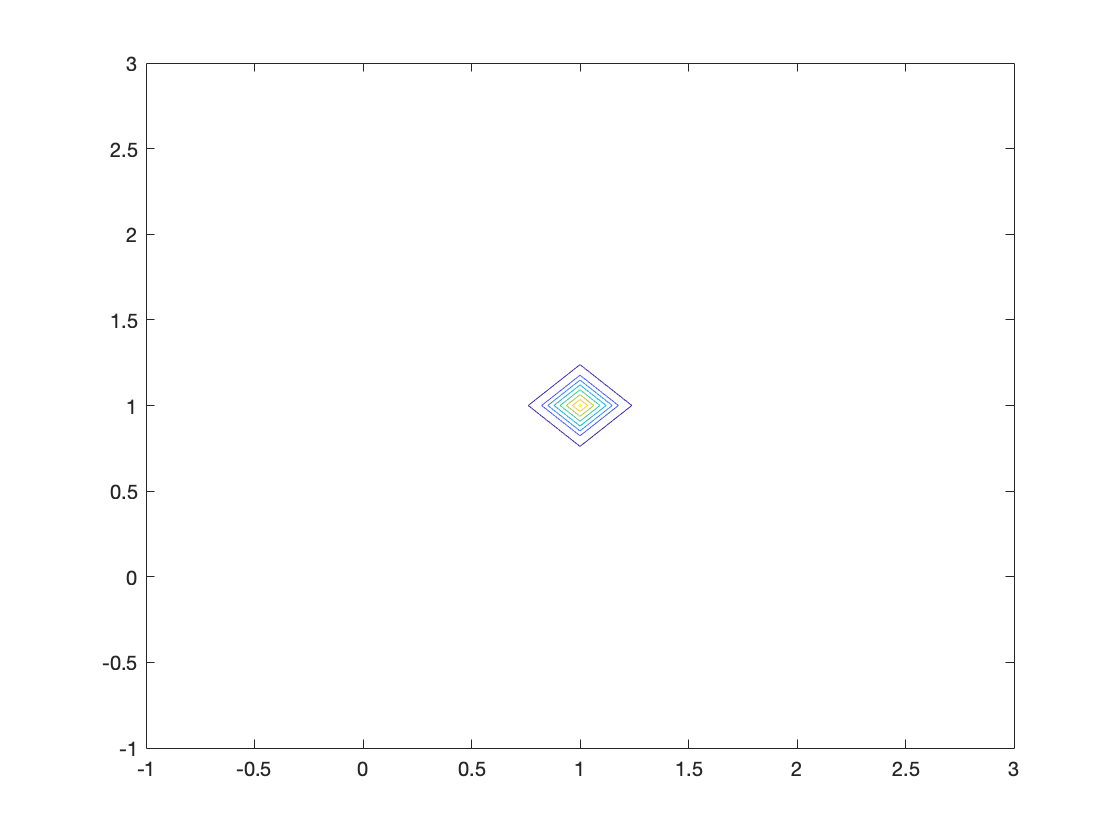}
\end{minipage}}

\centering 
\caption{Example \ref{Ex8}: contour plots of $\rho$ at times $t=0.1, 0.3, 0.5,0.7,0.9,1$.}
\label{dcontour-den1}
\end{figure}

\end{example}

\begin{example}\label{Ex9}
Spatial domain $\mathcal O=[-1,3]\times[-1,3].$
Initial density is the normalization of   
\begin{align*}
\mu &=(x_1+1)^2(x_1-3)^2+(x_2+1)^2(x_2-3)^2.
\end{align*}
Terminal distribution is the normalization of   
$\hat \nu$ in \eqref{2d-Laplace} with parameters $a_1=10,b_1=1.5,c_1=10,d_1=1.6,r_1=0.01.$
The contour plots of the density evolution are presented in 
Fig. \ref{econtour-den1}.

\begin{figure}
	\centering
	\subfigure {
		\begin{minipage}[b]{0.31\linewidth}
			\includegraphics[width=1.15\linewidth]{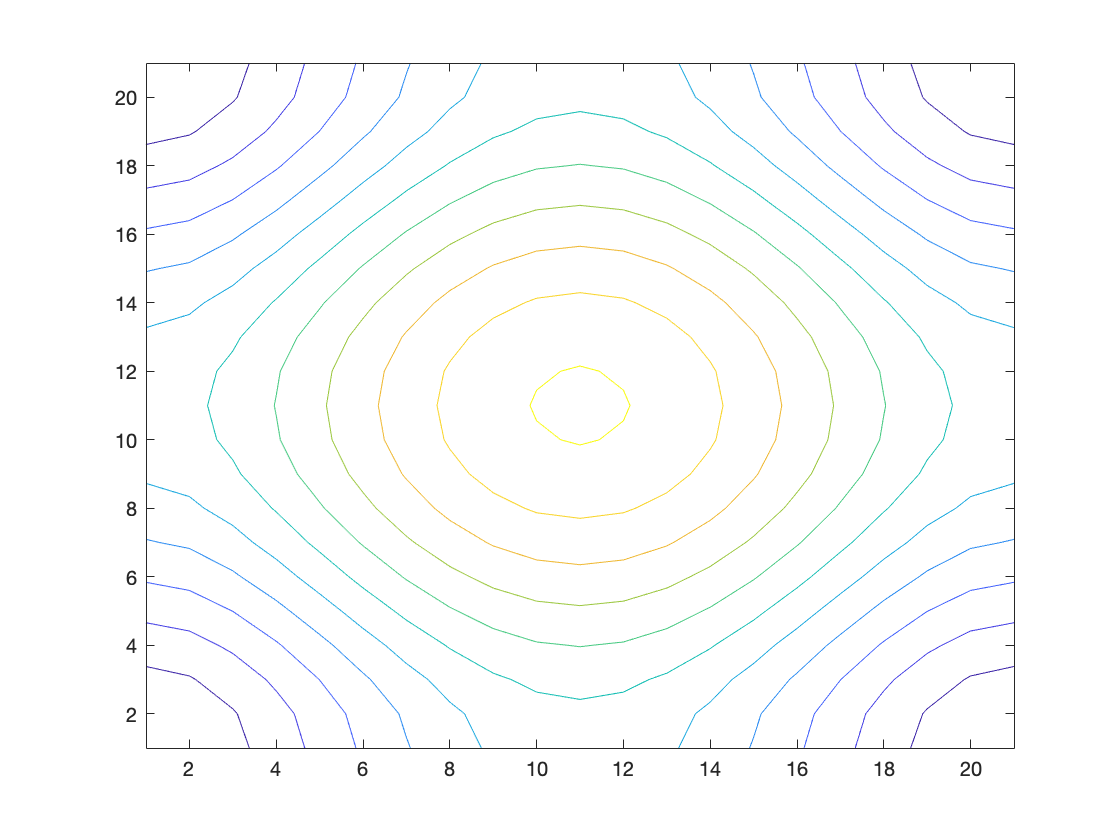}
			\includegraphics[width=1.15\linewidth]{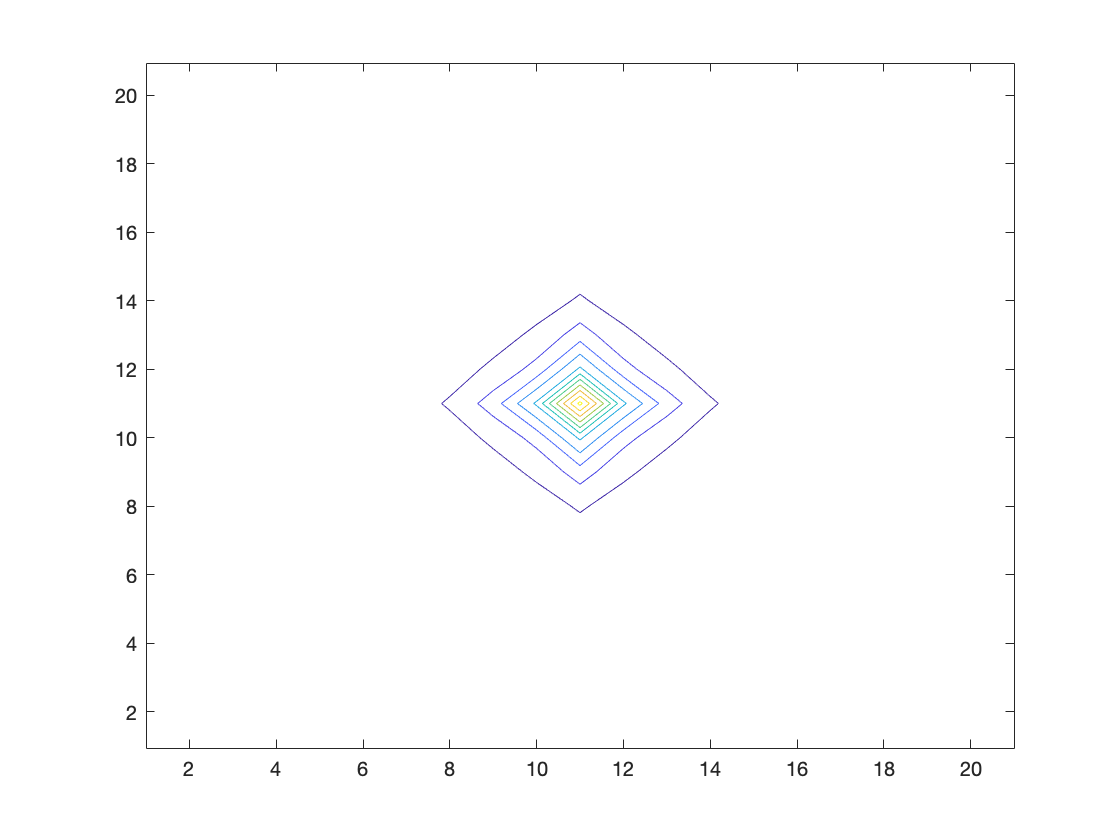}
	\end{minipage}}
	\subfigure{
		\begin{minipage}[b]{0.31\linewidth}
			\includegraphics[width=1.15\linewidth]{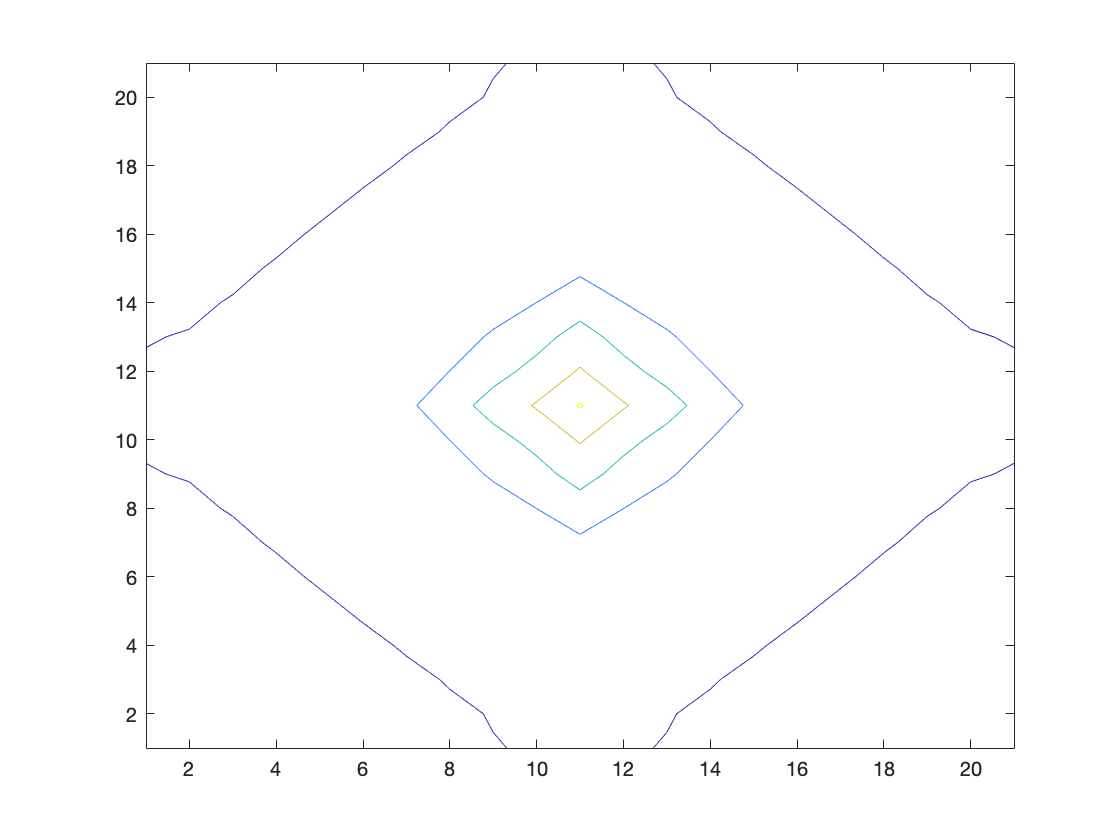}
			\includegraphics[width=1.15\linewidth]{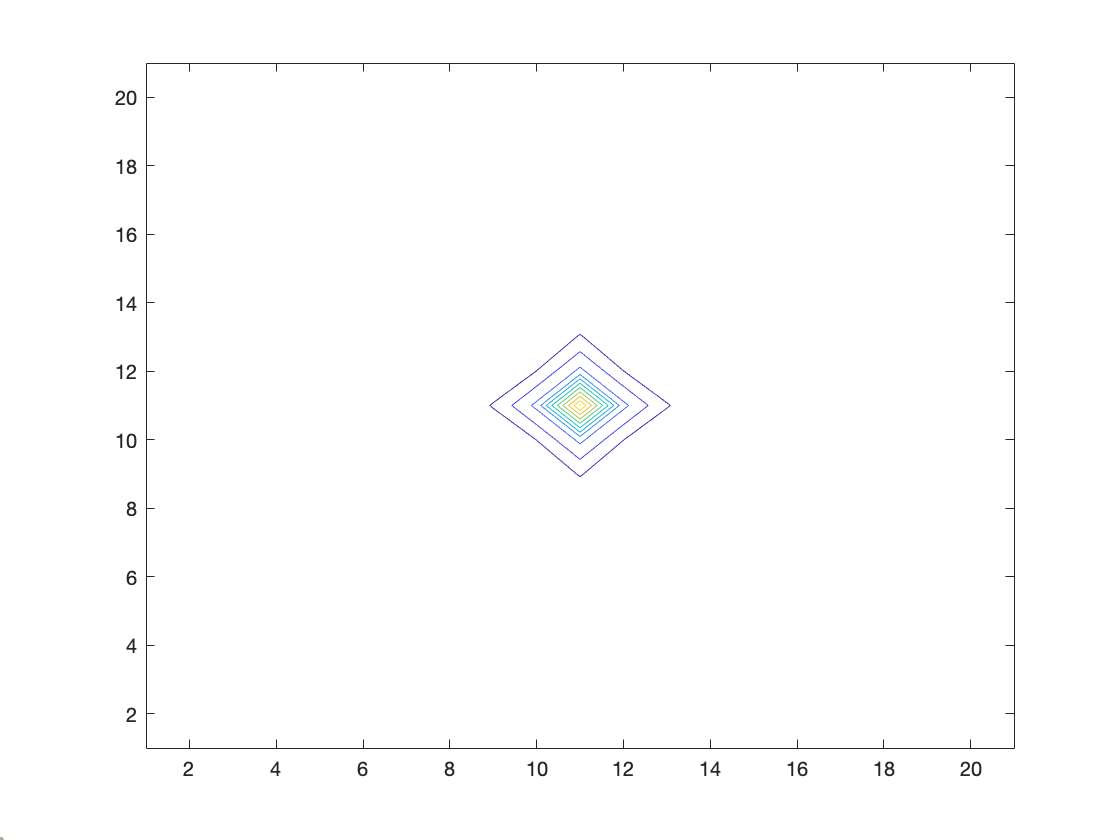}
	\end{minipage}}
	\subfigure{
		\begin{minipage}[b]{0.31\linewidth}
			\includegraphics[width=1.15\linewidth]{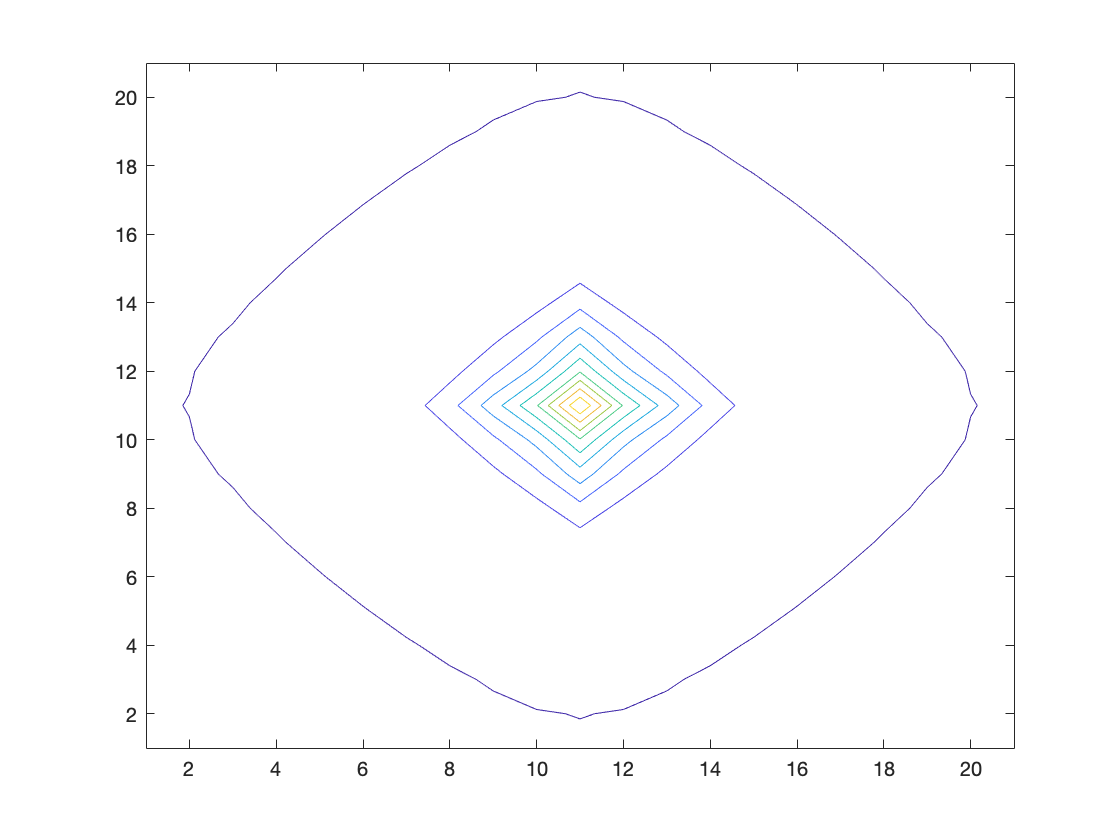}
			\includegraphics[width=1.15\linewidth]{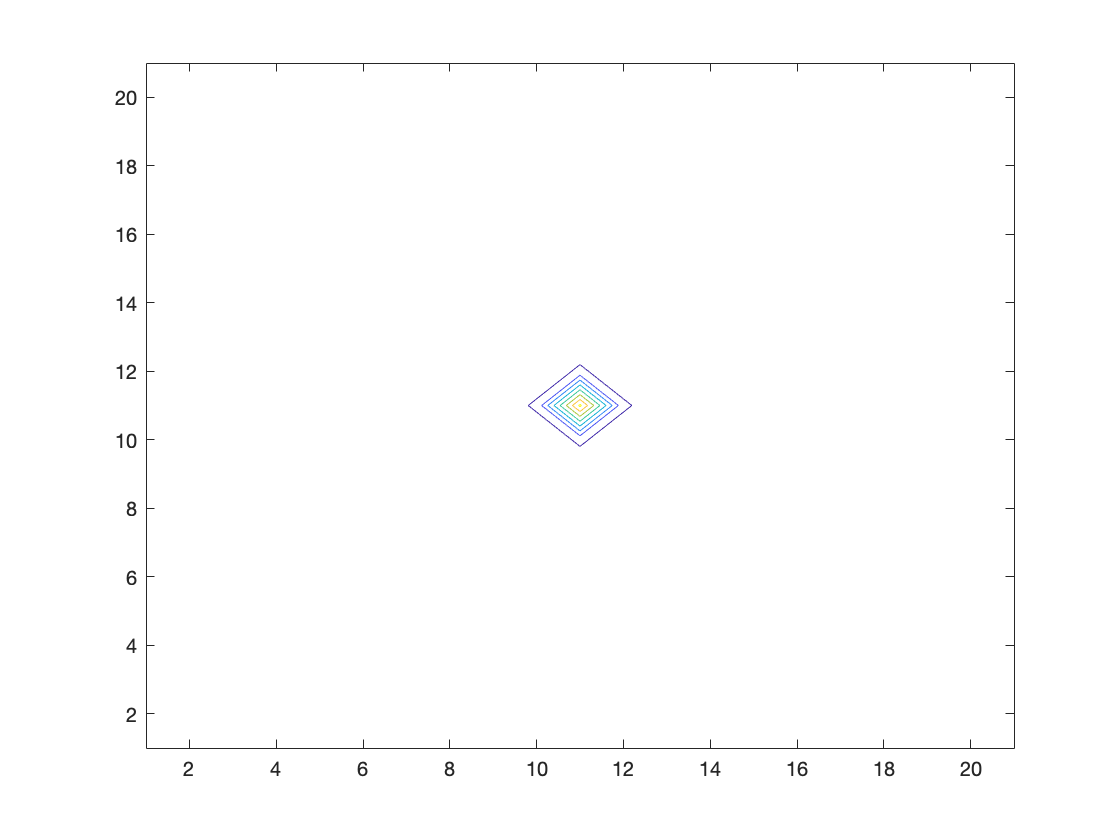}
	\end{minipage}}
	
	\centering 
	\caption{Example \ref{Ex9}: contour plots of $\rho$ at times $t=0, 0.2, 0.4,0.6,0.8,1$.}
	\label{econtour-den1}
\end{figure}

\end{example}

\begin{example}\label{Ex10}
Spatial domain $\mathcal O=[x_L,x_R]\times[x_L,x_R],$ $x_L=-1,x_R=3.$ 
The initial density and terminal distributions are normalized Gaussian densities with parameters
$a_0=a_1=50,b_0=0.5,b_1=1.5,c_0=c_1=50,d_0=0.3,d_1=1.3, r_1=r_2=0.001.$
The contour plot of the density evolution is presented in 
Fig. \ref{rhot-den}.
%\zhou{Do we have any reason why examples 6 and 7 are different and necessary to be included at the same time?}
%{\color{red} We may remove one of them. The only difference is the value of $a_0$ and $a_1$.}

\begin{figure}


\centering
\subfigure {
\begin{minipage}[b]{0.31\linewidth}
\includegraphics[width=1.15\linewidth]{rhot000.png}
\includegraphics[width=1.15\linewidth]{rhot060.png}
\end{minipage}}
\subfigure{
\begin{minipage}[b]{0.31\linewidth}
\includegraphics[width=1.15\linewidth]{rhot020.png}
\includegraphics[width=1.15\linewidth]{rhot080.png}
\end{minipage}}
\subfigure{
\begin{minipage}[b]{0.31\linewidth}
\includegraphics[width=1.15\linewidth]{rhot040.png}
\includegraphics[width=1.15\linewidth]{rhot100.png}
\end{minipage}}

\centering 
%\caption{the probability functions with $\mu, \nu$ at $t=0, 0.2, 0.4,0.6,0.8,1,$ in Example 7}
\caption{Example \ref{Ex10}: contour plots of $\rho$ at times $t=0, 0.2, 0.4,0.6,0.8,1$.}
\label{rhot-den}
\end{figure}

\end{example}

\section{Conclusions}

In this paper, we proposed a new algorithm for the geodesic equation with $L^2$-Wasserstein metric on probability set. 
Our algorithm is based on the Benamou-Brenier fluid-mechanics formulation of the OT problem.  Namely, 
we view the geodesic equation as a 
%constrained 
boundary value problem with prescribed initial and terminal probability densities.
To solve the boundary value problem, we adopted the multiple shooting method and
used Newton's method to solve the resulting nonlinear system.  % to solve the boundary-value problem resulting from the
We further adopted a continuation strategy in order to
enhance our ability to provide good initial guesses for Newton's method. 
%By viewing the geodesic equation \eqref{GeodEqn1} in OT as a constrained boundary value problem, we use the discrete 
%Wasserstein Hamiltonian flow \eqref{dhs} as a basic model to compute the density and velocity. 
%We studies the single and multiple shooting strategies for a reduced system \eqref{re-dhs} and prove the convergence 
%of the Newton's method during shooting procedures. To identify better initial guess in the multiple shooting formulation, 
%we propose a continuous multiple shooting algorithm. 
Finally, we presented several numerical experiments on challenging problems, to display the % accuracy and 
effectiveness of our algorithm. 

There are many interesting questions that remain to be tackled.  Surely adaptive techniques in space and time
would be very desirable, especially if one wants to extend our numerical method to the
%interesting questions is how to compute 
Wasserstein geodesic equations in higher dimension. 
The concern of truncating the spatial domain to a finite computational domain has not been
addressed in our work either, but this is clearly a problem of paramount importance and will require a 
careful theoretical estimation of decay rates of the densities involved.
We expect to tackle some of these issues in future work.
%We plan to investigate it through the properties of  continuous Wasserstein Hamiltonian flow 
%and its particle version, if that can be formulated in the future. . 

\bibliographystyle{plain}
\bibliography{bib}

\end{document}